\documentclass[12pt]{amsart}

\usepackage{a4wide}
\usepackage{amscd}

\usepackage{graphicx}

\usepackage{amsmath,amssymb,amsthm}

\usepackage{enumerate}
\usepackage[all]{xy}
\usepackage{tikz}
\usetikzlibrary{arrows}
\numberwithin{equation}{section}

\newtheorem{thm}{Theorem}[section]
\newtheorem{theorem}[thm]{Theorem}
\newtheorem{lemma}[thm]{Lemma}
\newtheorem{corollary}[thm]{Corollary}
\newtheorem{proposition}[thm]{Proposition}

\newtheorem*{MainTheorem}{Main Theorem}
\theoremstyle{definition}

\newtheorem{remark}[thm]{Remark}

\newtheorem{defn-thm}[thm]{Definition-Theorem}

\newcommand{\C}{\ensuremath{\mathbb{C}}}
\newcommand{\R}{\ensuremath{\mathbb{R}}}
\newcommand{\OO}{\ensuremath{\mathbb{O}}}
\newcommand{\FF}{\ensuremath{\mathbb{F}}}
\newcommand{\EE}{\ensuremath{\mathbb{E}}}
\newcommand{\HH}{\ensuremath{\mathbb{H}}}
\newcommand{\g}[1]{\ensuremath{\mathfrak{#1}}}

\newcommand{\Ss}{\ensuremath{\mathcal{S}}}

\DeclareMathOperator{\codim}{codim}

\DeclareMathOperator{\id}{id}

\DeclareMathOperator{\ad}{ad}

\begin{document}

\title[Submanifolds with constant principal curvatures]{Submanifolds with 
constant principal curvatures in symmetric spaces}

\author[J\"urgen Berndt and V\'ictor Sanmart\'in-L\'opez]{J\"urgen Berndt and V\'ictor Sanmart\'in-L\'opez}

\address{Department of Mathematics, King's College London, United Kingdom}
\email{jurgen.berndt@kcl.ac.uk}
\address{Departamento de Matem\'{a}tica Aplicada a las TIC, Universidad Polit\'{e}cnica de Madrid, Spain}
\email{victor.sanmartin@upm.es}

\thanks{The second author has been supported by projects PID2019-105138GB-C21 (AEI/FEDER, Spain) and ED431C 2019/10, ED431F 
2020/04 (Xunta de Galicia, Spain).}

\begin{abstract}
We study submanifolds whose principal curvatures, counted with multiplicities, do not depend on the normal direction. Such submanifolds, which we briefly call CPC submanifolds, are always austere, hence minimal, and have constant principal curvatures. Well-known classes of examples include totally geodesic submanifolds, homogeneous austere hypersurfaces, and singular orbits of cohomogeneity one actions. The main purpose of this article is to present a systematic approach to the construction and classification of homogeneous submanifolds whose principal curvatures are independent of the normal direction in irreducible Riemannian symmetric spaces of non-compact type and rank $\geq 2$. In particular, we provide a large number of new examples of non-totally geodesic CPC submanifolds not coming from cohomogeneity one actions (note that only one example was known previously, namely a particular 11-dimensional submanifold of the Cayley hyperbolic plane).
\end{abstract}

\maketitle

\section{Introduction} \label{intro}
Consider a cohomogeneity one action on a connected complete Riemannian manifold $M$. If there is a singular orbit $P$ of this action, then the principal curvatures of $P$ do not depend on the normal directions. More precisely, if $\xi_1$ and $\xi_2$ are two unit normal vectors of $P$, then the principal curvatures of $P$ with respect to $\xi_1$ and $\xi_2$ are the same, counted with multiplicities. This is a simple consequence of the homogeneity of $P$ and the fact that the slice representation of the action at any point $p \in P$ acts transitively on the unit sphere in the normal space of $P$ at $p$. 

An obvious consequence is that every singular orbit of a cohomogeneity one action is an austere, and hence minimal, submanifold. Austere submanifolds were introduced by Harvey and Lawson \cite{HL82} in the context of calibrated geometries. Another consequence is that every singular orbit of a cohomogeneity one action is a submanifold with constant principal curvatures. By definition, a submanifold $P$ of a Riemannian manifold $M$ has constant principal curvatures if the principal curvatures of $P$ are constant for any parallel normal vector field of $P$ along any piecewise differentiable curve in $P$. Submanifolds with constant principal curvatures were introduced and studied by Heintze, Olmos and Thorbergsson \cite{HOT91} in the context of isoparametric submanifolds. A remarkable result of their work states that a submanifold of a Euclidean space has constant principal curvatures if and only if it is an isoparametric submanifold or a focal manifold of an isoparametric submanifold. 

Assume that $M$ is a standard real space form, that is, $M$ is the real hyperbolic space $\R H^n$, the Euclidean space $\EE^n$, or the sphere $S^n$, with their standard metrics of constant curvature $-1,0,+1$ respectively. Let $P$ be a submanifold of $M$ with $\codim(P) \geq 2$. Using Jacobi 
field theory one can show that the principal curvatures of $P$ are independent of the normal direction if and only if the tubes (of sufficiently small radii) around $P$ have constant principal curvatures. According to \'Elie Cartan \cite{Ca38}, a hypersurface of a space of constant curvature 
has constant principal curvatures if and only if it is isoparametric. For 
$\R H^n$ and $\EE^n$, the classification problem for isoparametric hypersurfaces was solved by Cartan \cite{Ca38} and Segre \cite{Se38} approximately 80 years ago. In contrast, the problem for $S^n$ turned out to be very challenging and was solved only recently by Chi (\cite{Ch16}; see \cite{Ch17} for a survey).

One of the implications in the above characterization was recently generalized by Ge and Tang \cite{GeTang} to arbitrary Riemannian manifolds: Let 
$P$ be a submanifold of a Riemannian manifold $M$ with $\codim(P) \geq 2$ 
for which the tubes around it (for sufficiently small radii) are isoparametric hypersurfaces with constant principal curvatures. Then the principal curvatures of $P$ are independent of the normal direction. The converse 
is not true, since tubes around totally geodesic submanifolds are generally not isoparametric (see e.g.~\cite{DDS17}).

It is evident from the above that submanifolds whose principal curvatures 
are independent of the normal direction arise in various geometric contexts. However, there seems to be no systematic study in a more general setting. This is somewhat surprising, given that the condition on the principal curvatures is remarkably simple and natural. For the sake of brevity, we call such a submanifold a CPC submanifold (CPC = constant principal curvatures). Note that our notion is more restrictive than the one in \cite{HOT91}: Every CPC submanifold is a submanifold with constant principal 
curvatures in the sense of \cite{HOT91}. 

The purpose of this paper is to develop a systematic approach addressing existence and classification questions of CPC submanifolds in Riemannian symmetric spaces of non-compact type. Our focus here is on non-totally geodesic CPC submanifolds that are not orbits of cohomogeneity one actions, 
since the latter ones have already been investigated thoroughly by others 
(see e.g.\ \cite{BB01}, \cite{BT04}, \cite{BT07}, \cite{BT13}). Our study 
was triggered by the remarkable discovery in \cite{DD13} of an $11$-dimensional homogeneous CPC submanifold of the Cayley hyperbolic plane $\OO H^2$ that is not an orbit of a cohomogeneity one action. To our knowledge, this is the only known non-totally geodesic CPC submanifold in an irreducible Riemannian symmetric space of non-compact type that is not an orbit of a cohomogeneity one action.

Let $M = G/K$ be an irreducible Riemannian symmetric space of non-compact type, where $G = I^o(M)$ is the identity component of the isometry group of $M$ and $K$ is the isotropy group of $G$ at a point $o \in M$. Let $\g{g} = \g{k} \oplus \g{p}$ be the corresponding Cartan decomposition of the Lie algebra $\g{g}$ of $G$. Choose a maximal abelian subspace $\g{a}$ of $\g{p}$ and let $\g{g} = \g{g}_{0} \oplus \left( \bigoplus_{\alpha \in \Delta} \g{g}_{\alpha} \right)$ be the induced restricted root space decomposition of $\g{g}$, where $\Delta$ denotes the set of restricted roots. Let $\g{g} = \g{k} \oplus \g{a} \oplus \g{n}$ be the corresponding Iwasawa decomposition of $\g{g}$. Denote by $AN$ the solvable closed connected subgroup of $G$ with Lie algebra $\g{a} \oplus \g{n}$. Then $M$ is isometric to $AN$ endowed with a suitable left-invariant Riemannian 
metric. Let $\Pi$ be a set of simple roots for $\Delta$ and denote by $\Pi'$ the set of simple roots $\alpha \in \Pi$ with $2 \alpha \notin \Delta$. Note that there is at most one simple root in $\Pi$ that does not belong to $\Pi'$, and this happens precisely when the restricted root system of $G/K$ is of type $BC_r$. Denote by $\g{k}_{0} = \g{g}_{0} \cap \g{k}$ the principal isotropy subalgebra of $\g{k}$. If $U$ is a vector space with an inner product and $W \subseteq U$ is a linear subspace, we will denote by $U \ominus W$ the orthogonal complement of $W$ in $U$ with respect to the inner product. 

We now state the main result of this paper.

\begin{MainTheorem}
	Let $\g{s} = \g{a} \oplus (\g{n} \ominus V)$ be a subalgebra of $\g{a} 
\oplus \g{n}$ with $V \subseteq \bigoplus_{\alpha \in \Pi'} \g{g}_{\alpha}$. Let $S$ be the connected closed subgroup of $AN$ with Lie algebra $\g{s}$. Then the orbit $S \cdot o$ is a CPC submanifold of $M = G/K$ if and only if one of the following statements holds:
	\begin{itemize}
		\item[{\rm (I)}] There exists a simple root $\lambda \in \Pi'$ with $V \subset \g{g}_{\lambda}$. \label{main:simple:examples}
		\item[{\rm (II)}] There exist two non-orthogonal simple roots $\alpha_0,\alpha_1 \in \Pi'$ with $|\alpha_0| = |\alpha_1|$ and subspaces $V_0 \subseteq \g{g}_{\alpha_0}$ and $V_1 \subseteq \g{g}_{\alpha_1}$ such that 
$V = V_0 \oplus V_1$ and one of the following conditions holds: \label{main:new:examples}
		\begin{itemize}
			\item[{\rm (i)}] $V_0 \oplus V_1 = \g{g}_{\alpha_0} \oplus \g{g}_{\alpha_1}$; \label{singular:orbit} 
			\item[{\rm (ii)}] $V_0 \oplus V_1$ is a proper subset of $\g{g}_{\alpha_0} \oplus \g{g}_{\alpha_1}$ and
			\begin{itemize}
				\item[{\rm (a)}] $V_0$ and $V_1$ are isomorphic to $\mathbb{R}$; or
				\item[{\rm (b)}] $V_0$ and $V_1$ are isomorphic to $\mathbb{C}$ and there exists $T \in \g{k}_0$ such that $\ad(T)$ defines complex structures 
on $V_0$ and $V_1$ and vanishes on $[V_0, V_1]$; or \label{main:complex}
				\item[{\rm (c)}] $V_0$ and $V_1$ are isomorphic to $\mathbb{H}$ and there exists a subset $\g{l} \subseteq \g{k}_0$ such that $\ad(\g{l})$ defines quaternionic structures on $V_0$ and $V_1$ and vanishes on $[V_0, V_1]$. \label{main:quaternionic}
			\end{itemize}
		\end{itemize}
	\end{itemize}
	Moreover, only the submanifolds given by {\rm (I)} and {\rm (II)(i)} can 
appear as singular orbits of cohomogeneity one actions.
\end{MainTheorem} 

Using algebraic theory of semisimple Lie algebras, this result produces many examples of non-totally geodesic homogeneous CPC submanifolds that are not orbits of cohomogeneity one actions.  

The submanifolds in (I) can be thought of as canonical extensions of submanifolds in real hyperbolic spaces. According to \cite{Ca38}, all these examples are singular orbits of cohomogeneity one actions. Thus, from the above mentioned result by Ge and Tang, we obtain directly that their principal curvatures are independent of the normal direction. The interesting 
submanifolds are therefore the ones in (II). We will construct these submanifolds explicitly and compute their shape operator. For this purpose, we first generalise the important concept of strings generated by a single 
root to a more general concept of strings generated by two roots. This more general concept will then induce a natural decomposition of the tangent space of the submanifold into subspaces that are invariant under the shape operator. The root space structure will then allow us to calculate explicitly the shape operator when restricted to each of these invariant subspaces. This technique is original and we hope that it can be applied also in other situations. We will also construct explicitly the complex and 
quaternionic structures mentioned in the Main Theorem.

Roughly, our strategy for this paper is divided into three parts: a construction part, a classification part and a description part. We will first 
construct the submanifolds introduced in the Main Theorem and, in particular, we will see that all the cases occur. We will also prove that their principal curvatures are independent of the normal direction. We will then prove, in the classification part, that there are no other such submanifolds under the given hypotheses. Finally, in the description part, we will find out which of these submanifolds do not come from cohomogeneity one actions. 

The paper is organized as follows. In Section \ref{preliminaries}, we introduce the main tools used for our investigations. In Section \ref{construction:examples}, we start by introducing the general setting for constructing the new CPC submanifolds. We show that for understanding the principal curvatures of those submanifolds it suffices to determine a specific decomposition of the tangent space into certain subspaces that are invariant by the shape operator. Calculating the shape operator when restricted 
to such a subspace turns out to be equivalent to studying our problem for 
a symmetric space whose Dynkin diagram is of type $A_2$, that is, for the 
symmetric spaces $SL_{3}(\mathbb{R})/SO_{3}$, $SL_{3}(\mathbb{C})/SU_{3}$, $SL_{3}(\mathbb{H})/Sp_{3}$ and  $E^{-26}_6/F_4$. In the final part of the section we prove the construction and classification part of the Main 
Theorem for these particular symmetric spaces.  In Section~\ref{canonical:extension} we will show that all the submanifolds in the Main Theorem are indeed CPC submanifolds. Thus, in Section~\ref{canonical:extension}, we 
finish the construction part of the paper. Section~\ref{classification} is devoted to the classification part of the Main Theorem. Actually, we will see that if the subspace $V$ does not satisfy the conditions in (I) or 
(II), then $S \cdot o$ cannot be a CPC submanifold.  In Section \ref{new:examples}, we analyze if the new submanifolds can be realized as singular 
orbits of cohomogeneity one actions. Finally, in Section \ref{geometric explanations}, we provide some further geometric explanations of the examples in the rank $2$ cases.

\section{Preliminaries}\label{preliminaries}

Let $M$ be a connected Riemannian symmetric space of non-compact type. Denote by $G$ the identity component of the isometry group of $M$. Let $K$ be the isotropy group of $G$ at a point $o \in M$. Then $M$ is diffeomorphic to $G/K$. Let $\g{g}$ and $\g{k}$ be the real Lie algebras of $G$ and 
$K$, respectively. Then $\g{k}$ is a maximal compact subalgebra of the real semisimple Lie algebra $\g{g}$. Let $B$ be the Killing form of $\g{g}$ 
and define $\g{p}$ as the orthogonal complement of $\g{k}$ in $\g{g}$ with respect to $B$. Then $\g{g} = \g{k} \oplus \g{p}$ is a Cartan decomposition of the Lie algebra $\g{g}$. If we denote by $\theta$ the corresponding Cartan involution, then $\langle X, Y \rangle_{B_{\theta}} = -B(X, 
\theta Y)$ defines a $\theta$-invariant positive definite inner product on $\g{g}$. Note also that $\theta\rvert _{\g{k}} = \id_{\g{k}}$ and $\theta \rvert_{\g{p}} = -\id_{\g{p}}$. The map $\phi_o \colon G \to M,\ g 
\mapsto g(o)$, allows us to identify $\g{p}$ with the tangent space $T_{o}M$. Then, by a suitable normalization, the metric of $M$ at $T_oM$ coincides with the restriction of the inner product $\langle \cdot, \cdot \rangle_{B_{\theta}}$ to $\g{p} \times \g{p}$.

Let $\g{a}$ be a maximal abelian subspace of $\g{p}$ and denote by $\g{a}^{*}$ its dual vector space. For each $\lambda \in \g{a}^{*}$ define 
\[
\g{g}_{\lambda} = \{ X \in \g{g} : \ad(H) X = \lambda(H) X \ \text{for all} \ H \in \g{a}\}.
\]
If $\lambda \neq 0$ and $\g{g}_{\lambda} \neq \{0\}$, then $\lambda$ is called a restricted root and $\g{g}_{\lambda}$ is called a restricted root 
space. We denote by $\Delta$ the set of all restricted roots and put $\Delta_0 = \Delta \cup \{0\}$. The root spaces $\g{g}_{\lambda}$ and $\g{g}_{-\lambda}$ are related by $\theta \g{g}_{\lambda} = \g{g}_{-\lambda}$ for all $\lambda \in \Delta$. Note that $\{\ad(H) : H \in \g{a} \}$ is a family of pairwise commuting self-adjoint endomorphisms of $\g{g}$. This guarantees that $\Delta$ is a finite and non-empty subset of $\g{a}^{*}$. The $\langle \cdot, \cdot \rangle_{B_{\theta}}$-orthogonal decomposition
\[
\g{g}=\g{g}_0 \oplus \left( \bigoplus_{\lambda \in \Delta} \g{g}_\lambda \right)
\]
is called the restricted root space decomposition of $\g{g}$ with respect 
to $\g{a}$. Furthermore, the restricted root spaces satisfy the bracket relation
\begin{equation}\label{bracket:relation}
[\g{g}_{\lambda},\g{g}_{\mu}] \subseteq \g{g}_{\lambda+\mu}
\end{equation}
for all $\lambda,\mu \in \Delta$, where $\g{g}_{\lambda+\mu} = \{0\}$ if $\lambda+\mu \notin \Delta$. For each $\lambda \in \Delta$ the root vector $H_{\lambda} \in \g{a}$ is defined by $\lambda(H) = \langle H, H_{\lambda} \rangle_{B_{\theta}}$ for all $H \in \g{a}$. Thus we have an inner product in $\Delta$ given by $\langle \alpha, \lambda \rangle = \langle H_{\alpha}, H_{\lambda} \rangle_{B_{\theta}}$, for each $\alpha$, $\lambda \in \Delta$. Let $\Pi$ be a set of simple roots for $\Delta$ and denote by $\Delta^+$ the resulting set of positive roots. We also define
\[
\Pi^\prime = \{\alpha \in \Pi : 2\alpha \notin \Delta^+\}.
\]
Note that $\Pi^\prime = \Pi$ if and only if the restricted root system is not of type $BC_r$. If the root system is of type $BC_r$, then there exists exactly one simple root $\alpha \in \Pi$ with $2\alpha \in \Delta$. 
Each root $\lambda \in \Delta$ can be written as $\lambda = \sum_{\alpha \in \Pi }n_{\alpha} \alpha$, where the coefficients $n_{\alpha}$ are either all non-negative or all non-positive integers depending on whether $\lambda$ is a positive root or a negative root, respectively. For each root $\lambda \in \Delta^{+}$, the sum $l(\lambda) = \sum_{\alpha \in \Pi} n_{\alpha}$ is called the level of the root $\lambda$. 

Let $\alpha \in \Delta$ and $\lambda \in \Delta_0$. The $\alpha$-string containing $\lambda$ is defined as the set of all elements in $\Delta_0$ of the form $\lambda + n \alpha$ with $n \in \mathbb{Z}$.  We state some important facts about strings that will be used throughout this work.

\begin{proposition}\label{p2.48}
	\cite[Proposition 2.48]{K} Let $\Delta$ be the restricted root system of 
a Riemannian symmetric space of non-compact type.
	\begin{enumerate}[{\rm (i)}]
		\item If $\alpha \in\Delta$, then $-\alpha \in \Delta$. \label{oposite}
		\item If $\alpha \in \Delta$ and $\lambda \in \Delta_0$, then
		\[
		A_{\alpha,\lambda} = \frac{2 \langle \lambda, \alpha \rangle}{|\alpha|^2} \in \{0, \pm 1, \pm 2, \pm 3\, \pm 4\},
		\]
		and $\pm 4$ occurs only when $\Delta$ is non-reduced  and  $\lambda = 
\pm 2 \alpha$.
		\item If $\alpha,\lambda\in\Delta$ are non-proportional and $|\lambda| \leq |\alpha|$, then $A_{\alpha,\lambda} \in \{0, \pm 1\}$. \label{pr:short}
		\item Let $\alpha,\lambda \in \Delta$. If $\langle \alpha, \lambda \rangle > 0$, then $\alpha - \lambda \in \Delta_0$, and if $\langle \alpha, \lambda \rangle < 0$, then $\alpha + \lambda \in \Delta_0$. \label{sd}
		\item If $\alpha\in\Delta$ and $\lambda\in\Delta_0$, then the $\alpha$-string containing $\lambda$ has the form $\lambda + n \alpha$ for $-p \leq n \leq q$ with $p, q \geq 0$. There are no gaps. Furthermore $p-q = A_{\alpha,\lambda}$. The $\alpha$-string containing $\lambda$ contains at most four roots. \label{pr:string}
	\end{enumerate}
\end{proposition}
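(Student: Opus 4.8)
The plan is to treat part~(\ref{oposite}) as immediate and to derive everything else from the representation theory of $\g{sl}_2(\R)$ applied to the root spaces: the string structure in~(\ref{pr:string}) is the engine, and (\ref{sd}), (\ref{pr:short}) and the Cartan-integer bounds in part~(ii) are then corollaries. For~(\ref{oposite}) I would simply invoke the relation $\theta\g{g}_\lambda = \g{g}_{-\lambda}$ recalled above: since $\theta$ is a linear isomorphism it carries $\g{g}_\alpha$ bijectively onto $\g{g}_{-\alpha}$, so $\g{g}_\alpha \neq \{0\}$ forces $\g{g}_{-\alpha}\neq\{0\}$, i.e. $-\alpha\in\Delta$.

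For the string lemma~(\ref{pr:string}), fix $\alpha\in\Delta$, choose $0\neq E_\alpha\in\g{g}_\alpha$, and set $H'_\alpha = 2H_\alpha/|\alpha|^2$ and $F_\alpha = -\theta E_\alpha$. Since $[E_\alpha,F_\alpha]$ is $\theta$-anti-invariant and centralises $\g{a}$ it lies in $\g{a}\cap\g{g}_0$, in fact in $\R H_\alpha$, so after rescaling $E_\alpha$ one obtains an $\g{sl}_2(\R)$-triple with $[H'_\alpha,E_\alpha]=2E_\alpha$, $[H'_\alpha,F_\alpha]=-2F_\alpha$ and $[E_\alpha,F_\alpha]=H'_\alpha$. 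The bracket relation~(\ref{bracket:relation}) shows that $V_{\alpha,\lambda} = \bigoplus_{n\in\mathbb{Z}}\g{g}_{\lambda+n\alpha}$ is a finite-dimensional module for this subalgebra, and on the summand $\g{g}_{\lambda+n\alpha}$ the element $\ad(H'_\alpha)$ acts as the scalar $(\lambda+n\alpha)(H'_\alpha)=A_{\alpha,\lambda}+2n$. By the representation theory of $\g{sl}_2(\R)$ these weights are integers, giving $A_{\alpha,\lambda}\in\mathbb{Z}$, and within the relevant parity class they form a symmetric, gap-free block. Hence the indices $n$ with $\g{g}_{\lambda+n\alpha}\neq\{0\}$ fill an unbroken interval $-p\le n\le q$; I would make the no-gaps step precise either from the fact that a nonzero $\g{sl}_2(\R)$-module has a connected weight string in each coset of $2\mathbb{Z}$, or by a direct Jacobi-identity argument showing that $\g{g}_{\lambda-\alpha}\neq\{0\}$ and $\g{g}_{\lambda+\alpha}\neq\{0\}$ force $\g{g}_\lambda\neq\{0\}$. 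Finally, the Weyl reflection $s_\alpha(\mu)=\mu - A_{\alpha,\mu}\alpha$ permutes the string and sends level $n$ to level $-(A_{\alpha,\lambda}+n)$; interchanging the two ends yields $p-q = A_{\alpha,\lambda}$.

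Part~(\ref{sd}) is then read off directly: if $\langle\alpha,\lambda\rangle>0$ then $A_{\alpha,\lambda}>0$, so $p=q+A_{\alpha,\lambda}\ge1$ and level $n=-1$ lies in the string, i.e. $\lambda-\alpha\in\Delta_0$; the case $\langle\alpha,\lambda\rangle<0$ is symmetric. For the Cartan-integer bounds in part~(ii) I would use Cauchy--Schwarz in the form $A_{\alpha,\lambda}A_{\lambda,\alpha}=4\cos^2\angle(\alpha,\lambda)$, with equality to $4$ exactly when $\alpha,\lambda$ are proportional. As both Cartan integers are integers of the same sign, $|A_{\alpha,\lambda}|\ge5$ would force their product to be $\ge5$, impossible unless one factor vanishes (and $A_{\lambda,\alpha}=0\Leftrightarrow A_{\alpha,\lambda}=0$); hence $A_{\alpha,\lambda}\in\{0,\pm1,\pm2,\pm3,\pm4\}$. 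If $A_{\alpha,\lambda}=\pm4$ the inequality forces the product to equal $4$, so $\cos^2\angle(\alpha,\lambda)=1$ and $\lambda=c\alpha$; then $A_{\alpha,\lambda}=2c=\pm4$ gives $\lambda=\pm2\alpha$, which requires $\Delta$ to be non-reduced. For~(\ref{pr:short}), non-proportionality gives the strict bound $A_{\alpha,\lambda}A_{\lambda,\alpha}\le3$, while $|\lambda|\le|\alpha|$ gives $A_{\alpha,\lambda}/A_{\lambda,\alpha}=|\lambda|^2/|\alpha|^2\le1$, so $A_{\alpha,\lambda}^2\le A_{\alpha,\lambda}A_{\lambda,\alpha}\le3$ and $A_{\alpha,\lambda}\in\{0,\pm1\}$.

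It remains to bound the length of a string, which I would do last, after part~(ii) is available. Applying part~(ii) to the lowest root $\mu_-=\lambda-p\alpha$ of the string, whose own $\alpha$-string cannot descend further, gives $A_{\alpha,\mu_-}=-(p+q)$, so $p+q\le4$ and the string meets $\Delta_0$ in at most five points. A string of five points would force $A_{\alpha,\mu_-}=-4$, hence $\mu_-=-2\alpha$ by the equality case above, so the string is $\{-2\alpha,-\alpha,0,\alpha,2\alpha\}$ and contains the non-root $0$; thus at most four of its members are actual roots. I expect the main obstacle to be precisely this bookkeeping in the non-reduced ($BC_r$) situation: establishing the no-gaps property cleanly, and tracking when the zero weight enters the string so as to distinguish ``at most five elements of $\Delta_0$'' from ``at most four roots.'' By contrast, the integrality, the symmetry $p-q=A_{\alpha,\lambda}$, and the angle estimates are routine once the $\g{sl}_2(\R)$-module $V_{\alpha,\lambda}$ has been set up.
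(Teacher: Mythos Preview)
The paper does not give its own proof of this proposition: it is stated as a citation of \cite[Proposition 2.48]{K} and used as a black box thereafter. So there is no ``paper's proof'' to compare your proposal against.

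That said, your outline is the standard textbook argument (essentially Knapp's), and it is correct. The only cosmetic point is in part~(ii): you invoke $A_{\lambda,\alpha}$, which is defined only for $\lambda\in\Delta$, whereas the statement allows $\lambda\in\Delta_0$; but the case $\lambda=0$ is trivial, so this is harmless.
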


In the following result we clarify the relation between the dimensions of 
the root spaces involved in a string. 

\begin{lemma}\label{root:spaces:dimension}
	Let $\alpha,\lambda \in \Delta^+$ be linearly independent.
	\begin{enumerate}[{\rm (i)}]
		\item If the $\alpha$-string containing $\lambda$ consists of $\lambda,\lambda+\alpha$, then $A_{\alpha,\lambda} = -1$ and $\dim(\g{g}_{\lambda}) = \dim(\g{g}_{\lambda + \alpha})$.
		\item If the $\alpha$-string containing $\lambda$ consists of $\lambda,\lambda+\alpha,\lambda+2\alpha$, then $A_{\alpha,\lambda} = -2$, $\dim(\g{g}_{\alpha}) = \dim(\g{g}_{\lambda + \alpha })$ and $\dim(\g{g}_{\lambda}) = \dim(\g{g}_{\lambda + 2 \alpha})$.
	\end{enumerate}
\end{lemma}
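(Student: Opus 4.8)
The plan is to read off the Cartan integers from the string data in Proposition~\ref{p2.48} and then obtain the dimension equalities from the representation theory of $\g{sl}_2(\R)$. Since in both items the string is given with $\lambda$ as its lowest member, we have $p=0$, and Proposition~\ref{p2.48}(v) gives $A_{\alpha,\lambda}=p-q$: in (i) this is $-1$ and in (ii) it is $-2$, which is the first assertion in each case.

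For the dimension equalities I would attach to $\alpha$ a normalized $\g{sl}_2$-triple $(H,E,F)$ with $H=\tfrac{2}{|\alpha|^2}H_\alpha$, $E\in\g{g}_\alpha$ and $F\in\g{g}_{-\alpha}$, obtained by choosing $0\neq E\in\g{g}_\alpha$ and rescaling (using that $[E,\theta E]$ is a nonzero multiple of $H_\alpha$). Then $\ad(H)$ acts on each root space $\g{g}_\mu$ as the scalar $\mu(H)=A_{\alpha,\mu}$. Because the $\alpha$-string containing $\lambda$ has no gaps and no further members, the subspace $W=\bigoplus_n\g{g}_{\lambda+n\alpha}$ is invariant under $\ad(H)$, $\ad(E)$ and $\ad(F)$, hence is a finite-dimensional $\g{sl}_2$-module; and since the weights $A_{\alpha,\lambda}+2n$ are pairwise distinct along the string, the weight space of $W$ for the weight $A_{\alpha,\lambda}+2n$ is exactly $\g{g}_{\lambda+n\alpha}$. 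The symmetry of the weight multiplicities of a finite-dimensional $\g{sl}_2$-module then yields $\dim\g{g}_\lambda=\dim\g{g}_{\lambda+\alpha}$ in case (i) (weights $-1,+1$) and $\dim\g{g}_\lambda=\dim\g{g}_{\lambda+2\alpha}$ in case (ii) (weights $-2,0,+2$).

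The remaining equality $\dim\g{g}_\alpha=\dim\g{g}_{\lambda+\alpha}$ in case (ii) is the one point needing extra care, because the module $W$ only involves $\g{g}_\lambda$, $\g{g}_{\lambda+\alpha}$ and $\g{g}_{\lambda+2\alpha}$ and says nothing about $\g{g}_\alpha$. My plan is to change strings and apply case (i) to the $\lambda$-string containing $\alpha$. First I would determine the reverse Cartan integer: since $A_{\alpha,\lambda}=-2$, Proposition~\ref{p2.48}(iii) rules out $|\lambda|\le|\alpha|$, so $|\lambda|>|\alpha|$, and applying (iii) with the roles of $\alpha,\lambda$ exchanged forces $A_{\lambda,\alpha}\in\{0,\pm1\}$; as $\langle\alpha,\lambda\rangle<0$ this gives $A_{\lambda,\alpha}=-1$, whence $|\lambda|^2=2|\alpha|^2$ and $\langle\alpha,\lambda\rangle=-|\alpha|^2$. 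I then check that the $\lambda$-string containing $\alpha$ is exactly $\alpha,\alpha+\lambda$: we have $\alpha+\lambda\in\Delta$ by Proposition~\ref{p2.48}(iv), while $\alpha-\lambda\notin\Delta_0$ because its squared length $|\alpha-\lambda|^2=5|\alpha|^2$ is too large (the ratio of squared root lengths in an irreducible restricted root system never exceeds $4$); together with $A_{\lambda,\alpha}=-1$ this pins the string down. Applying the already-established case (i) to this length-two string, now generated by the root $\lambda$, gives $\dim\g{g}_\alpha=\dim\g{g}_{\alpha+\lambda}$, as required.

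The step I expect to be the main obstacle is precisely this last one. The $\g{sl}_2$-module built from the $\alpha$-string does not see $\g{g}_\alpha$, so $\dim\g{g}_\alpha=\dim\g{g}_{\lambda+\alpha}$ cannot be obtained from it and genuinely requires passing to the $\lambda$-string; and to invoke case (i) there one must exclude the a priori possible length-four string, which is where the length identity $|\alpha-\lambda|^2=5|\alpha|^2$ and the bound on root-length ratios are used. (Both dimension equalities also follow immediately from the invariance of restricted root multiplicities under the Weyl group, via $s_\alpha(\lambda)=\lambda+2\alpha$ and $s_\lambda(\alpha)=\lambda+\alpha$; the $\g{sl}_2$ argument above is preferable here only because it relies on nothing beyond Proposition~\ref{p2.48}.)
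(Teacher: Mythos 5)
Your proof is correct, and it reaches the conclusion by a genuinely different route from the paper's. The paper deduces both dimension equalities from invariance of restricted root multiplicities under the Weyl group: in case (i) $s_\alpha(\lambda)=\lambda+\alpha$, in case (ii) $s_\alpha(\lambda)=\lambda+2\alpha$, and for the remaining equality in (ii) it observes that the rank-two system spanned by $\alpha,\lambda$ must be $B_2$ or $BC_2$ and applies $s_\lambda(\alpha)=\lambda+\alpha$ in the $B_2$-case and $s_{2\alpha}(\lambda)=2\alpha+\lambda$ in the extra $BC_2$-string. You instead attach an $\g{sl}_2$-triple to $\alpha$, note that the sum of the string's root spaces is a finite-dimensional $\g{sl}_2$-module whose weight spaces are precisely those root spaces, and read the equalities off from weight-multiplicity symmetry; for the extra equality in (ii) you compute $A_{\lambda,\alpha}=-1$ and apply the already-proved case (i) to the $\lambda$-string through $\alpha$. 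What each buys: the paper's argument is shorter but silently uses Weyl-invariance of root multiplicities, a standard fact it does not prove (and which is itself usually established either by your $\g{sl}_2$ argument or by realizing Weyl elements in $N_K(\g{a})$); your argument is self-contained modulo elementary $\g{sl}_2$ representation theory, and your uniform computation $A_{\lambda,\alpha}=-1$ replaces the paper's $B_2$/$BC_2$ case distinction by a single argument. One simplification is available at the step you flagged as the main obstacle: to rule out $\alpha-\lambda\in\Delta_0$ you do not need the length identity $|\alpha-\lambda|^2=5|\alpha|^2$ together with the bound on squared-length ratios (a fact outside the paper's quoted toolkit, though true). The hypothesis of case (ii) already states that the $\alpha$-string containing $\lambda$ is $\lambda,\lambda+\alpha,\lambda+2\alpha$, so $\lambda-\alpha\notin\Delta_0$, and hence $\alpha-\lambda\notin\Delta_0$ since $\Delta_0=-\Delta_0$; alternatively, if $\alpha-\lambda$ were a root, then $\langle\alpha-\lambda,\alpha\rangle=2|\alpha|^2$ gives $A_{\alpha,\alpha-\lambda}=4$, which by Proposition \ref{p2.48}(ii) forces $\alpha-\lambda=\pm2\alpha$, contradicting linear independence of $\alpha$ and $\lambda$.
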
 

\begin{proof}
	The statements about $A_{\alpha,\lambda}$ follow from Proposition \ref{p2.48}(\ref{pr:string}). We denote by $s_{\alpha} (\lambda) = \lambda- A_{\alpha, \lambda} \alpha$ the Weyl reflection of $\alpha$.
	
	If the $\alpha$-string containing $\lambda$ is $\lambda,\lambda+\alpha$, 
then $A_{\alpha,\lambda} = -1$ and  $s_\alpha(\lambda) = \lambda - A_{\alpha,\lambda}\alpha = \lambda+\alpha$. Since the Weyl reflection $s_\alpha$ interchanges $\lambda$ and $\lambda+\alpha$, we get $\dim(\g{g}_{\lambda}) = \dim(\g{g}_{\lambda + \alpha})$.
	
	Next, assume that the $\alpha$-string containing $\lambda$ is $\lambda,\lambda+\alpha,\lambda+2\alpha$. Then we have $A_{\alpha,\lambda} = -2$ and $s_\alpha(\lambda) = \lambda - A_{\alpha,\lambda}\alpha = \lambda+2\alpha$, which implies $\dim(\g{g}_{\lambda}) = \dim(\g{g}_{\lambda + 
2 \alpha})$. 
	The only root systems of rank two with $\alpha$-strings of length $3$ and containing only positive roots are $B_2$ and $BC_2$. In the $B_2$-case there is only one such string $\lambda,\lambda+\alpha,\lambda+2\alpha$, namely when $\alpha,\lambda$ are the simple roots of $B_2$ and $\alpha$ is 
the shortest of the two roots. In this case we have $s_{\lambda}(\alpha) = \lambda+\alpha$, which implies $\dim(\g{g}_{\alpha}) = \dim(\g{g}_{\lambda + \alpha })$. In the $BC_2$-case there is another such string $\lambda^\prime,\lambda^\prime+\alpha^\prime,\lambda^\prime+2\alpha^\prime$ with $\lambda^\prime = 2\alpha$ and $\alpha^\prime = \lambda$. In this situation we have $s_{\lambda^\prime}(\alpha^\prime) = s_{2\alpha}(\lambda) = 2\alpha + \lambda = \lambda^\prime + \alpha^\prime$, which implies $\dim(\g{g}_{\alpha^\prime}) = \dim(\g{g}_{\lambda^\prime + \alpha^\prime})$.
\end{proof}

After these algebraic considerations about the structure of the root system $\Delta$, we will focus on the Riemannian structure of $M$. From the bracket relations (\ref{bracket:relation}) we easily see that $\g{n} = \bigoplus_{\lambda \in \Delta^{+}} \g{g}_{\lambda}$ is a nilpotent subalgebra of $\g{g}$. The direct sum decomposition $\g{g} = \g{k} \oplus \g{a} \oplus \g{n}$ is an Iwasawa decomposition of $\g{g}$. Since $[\g{a}, \g{n}] = \g{n}$, we deduce that $\g{a} \oplus \g{n}$ is a solvable subalgebra of $\g{g}$. Let $A$, $N$ and $AN$ be the connected closed subgroups of $G$ with Lie algebras $\g{a}$, $\g{n}$ and $\g{a} \oplus \g{n}$ respectively. Then $G$ is diffeomorphic to the product $KAN$. Furthermore, the solvable Lie group $AN$ acts simply transitively on $M$, which allows us to equip $AN$ with a left-invariant Riemannian metric $\langle \cdot, \cdot \rangle_{AN}$ so that $AN$ and $M$ are isometric Riemannian manifolds. 
As shown in \cite{Ta11}, this metric is given by 
\[
\langle H_1 + X_1, H_2 + X_2 \rangle_{AN} = \langle H_1, H_2 \rangle_{B_{\theta}} + \frac{1}{2}\langle X_1, X_2 \rangle_{B_{\theta}},
\] 
where $H_1,H_2 \in \g{a}$ and $X_1,X_2 \in \g{n}$. Let $\nabla$ be the Levi-Civita connection of $AN = M$. Using the identity 
\begin{equation}\label{cartan:inner}
\langle \ad(X) Y, Z \rangle_{B_{\theta}} = -\langle  Y, \ad(\theta X) Z 
  \rangle_{B_{\theta}}  
\end{equation}
and the Koszul formula, we deduce (see e.g.\ \cite{BDT10}) 
\begin{equation}\label{levi}
4 \langle \nabla_{X} Y, Z  \rangle_{AN} =  \langle [X, Y] + (1-\theta)[\theta X, Y], Z  \rangle_{B_{\theta}}.
\end{equation}

In this paper, we are interested in a particular class of submanifolds of 
$M$. Let $\g{s}$ be a subalgebra of $\g{a} \oplus \g{n}$ and $S$ the connected closed subgroup of $AN$ with Lie algebra $\g{s}$. We will study the 
orbit $S \cdot o$, which by definition is a homogeneous submanifold of $M$. We can identify the tangent space $T_o(S \cdot o)$ with $\g{s}$ and the normal space $\nu_o(S \cdot o)$ with the orthogonal complement $V$ of $\g{s}$ in $\g{a} \oplus \g{n}$. The shape operator $\Ss_{\xi}$ of $S \cdot o$ with respect to a unit normal vector $\xi \in V$ is given by
\begin{equation}\label{definition:shape:operator}
\Ss_{\xi} X = - \left( \nabla_{X} \xi \right)^{\top},
\end{equation}
where $X \in \g{s}$ and $(\cdot)^{\top}$ denotes the orthogonal projection onto the space $T_o(S \cdot o) \cong \g{s}$. 

In order to simplify some arguments in this paper, we state a result which will allow us to use the Levi-Civita connection more efficiently. 

\begin{lemma}\label{lemma:a,k0} 
	Let $\lambda \in \Delta^{+}$ and $X,Y \in \g{g}_{\lambda}$ be orthogonal. 
	\begin{enumerate}[{\rm (i)}]
		\item $[\theta X, X] = 2\langle X, X \rangle_{AN} H_{\lambda}=\langle X, X \rangle_{B_{\theta}} H_{\lambda}$. \label{X:X}
		\item $[\theta X, Y] \in \g{k}_0 = \g{g}_0 \ominus \g{a}$. \label{ak0}
		\item If $2 \lambda \notin \Delta^+$, then 
		\[ \langle [\theta X, Y], [\theta X, Z]  \rangle_{B_{\theta}} = 4 |\lambda|^{2} \langle X, X \rangle_{AN} \langle Y, Z \rangle_{AN} \] for all 
$Z \in \g{g}_{\lambda}$ orthogonal to $X$. \label{k0:elements}
		\item If $2 \lambda \notin \Delta^+$, then $\nabla_X Y = 0$. \label{sh:itself}
	\end{enumerate}
\end{lemma}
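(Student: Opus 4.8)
The plan is to prove the four statements in the stated order, as each later part feeds on the earlier ones, and to reduce everything to short computations with the identity (\ref{cartan:inner}) and the bracket relation (\ref{bracket:relation}). For part (i) I would first locate $[\theta X, X]$: since $\theta X \in \g{g}_{-\lambda}$ and $X \in \g{g}_{\lambda}$, relation (\ref{bracket:relation}) gives $[\theta X, X] \in \g{g}_0$, and the computation $\theta[\theta X, X] = [\theta\theta X, \theta X] = [X, \theta X] = -[\theta X, X]$ shows it lies in $\g{p}$, hence in $\g{g}_0 \cap \g{p} = \g{a}$. It then suffices to test it against an arbitrary $H \in \g{a}$: using (\ref{cartan:inner}) together with $[X,H] = -\lambda(H)X$ I obtain $\langle [\theta X, X], H \rangle_{B_\theta} = \lambda(H)\langle X, X \rangle_{B_\theta} = \langle H, H_\lambda \rangle_{B_\theta}\langle X, X \rangle_{B_\theta}$, which identifies $[\theta X, X]$ with $\langle X, X \rangle_{B_\theta} H_\lambda$ inside $\g{a}$. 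The two displayed forms agree because $\langle X, X \rangle_{B_\theta} = 2\langle X, X \rangle_{AN}$ for $X \in \g{n}$, by the metric formula.

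Part (ii) uses the very same inner-product computation: for $H \in \g{a}$, the identity (\ref{cartan:inner}) and the orthogonality $X \perp Y$ give $\langle [\theta X, Y], H \rangle_{B_\theta} = \lambda(H)\langle X, Y \rangle_{B_\theta} = 0$. Since $[\theta X, Y] \in \g{g}_0 = \g{k}_0 \oplus \g{a}$, an orthogonal sum with respect to $\langle \cdot,\cdot\rangle_{B_\theta}$, the vanishing of its $\g{a}$-component forces $[\theta X, Y] \in \g{k}_0$.

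For part (iii) I would move one factor across with (\ref{cartan:inner}) and then expand with the Jacobi identity, writing $\langle [\theta X, Y], [\theta X, Z] \rangle_{B_\theta} = -\langle Y, [X, [\theta X, Z]] \rangle_{B_\theta}$ and $[X, [\theta X, Z]] = [[X, \theta X], Z] + [\theta X, [X, Z]]$. Here the hypothesis $2\lambda \notin \Delta^+$ enters decisively: one has $[X, Z] \in \g{g}_{2\lambda} = \{0\}$, so the second bracket drops out, while part (i) gives $[[X,\theta X], Z] = -\langle X, X\rangle_{B_\theta}[H_\lambda, Z] = -|\lambda|^2 \langle X, X\rangle_{B_\theta}\, Z$, using $[H_\lambda, Z] = \lambda(H_\lambda)Z = |\lambda|^2 Z$. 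Substituting and converting the two $B_\theta$-products into $AN$-products (each contributing a factor $2$) produces the claimed $4|\lambda|^2 \langle X, X\rangle_{AN}\langle Y, Z\rangle_{AN}$.

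Finally, part (iv) follows from the Koszul-type formula (\ref{levi}): for every $Z$ one has $4\langle \nabla_X Y, Z\rangle_{AN} = \langle [X,Y] + (1-\theta)[\theta X, Y], Z\rangle_{B_\theta}$. Since $2\lambda \notin \Delta^+$ we again get $[X,Y] \in \g{g}_{2\lambda} = \{0\}$, and by part (ii) the element $[\theta X, Y] \in \g{k}_0 \subset \g{k}$ is fixed by $\theta$, so $(1-\theta)[\theta X, Y] = 0$; hence the right-hand side vanishes for all $Z$ and $\nabla_X Y = 0$. All of this is elementary bracket manipulation; the only structural inputs beyond routine calculation are the fact $\g{g}_0 \cap \g{p} = \g{a}$ used to pin down $[\theta X, X]$ in part (i), and the repeated observation that the assumption $2\lambda \notin \Delta^+$ is precisely what annihilates the $\g{g}_{2\lambda}$ terms in parts (iii) and (iv). I therefore expect no serious obstacle.
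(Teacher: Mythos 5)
Your proof is correct and takes essentially the same route as the paper's: locate $[\theta X,X]$ in $\g{a}=\g{g}_0\cap\g{p}$ and test against $\g{a}$ for (i), the identical orthogonality computation for (ii), the same transfer via (\ref{cartan:inner}) plus Jacobi plus the vanishing of $\g{g}_{2\lambda}$ for (iii), and equation (\ref{levi}) for (iv). The only cosmetic difference is in (iv), where you kill the term $(1-\theta)[\theta X,Y]$ outright using $\theta\rvert_{\g{k}}=\id_{\g{k}}$ on $\g{k}_0$, while the paper instead argues that $[\theta X,Y]$ and $[X,\theta Y]$ have trivial projection onto $\g{a}\oplus\g{n}$ --- the same underlying fact, yours stated a bit more economically.
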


\begin{proof}
	Firstly, we have $\theta[\theta X, X] = -[\theta X, X]$. Using the bracket relation in (\ref{bracket:relation}), the Cartan decomposition $\g{g} = \g{k} \oplus \g{p}$ and the facts that $\theta\rvert _{\g{k}} = \id_{\g{k}}$ and $\theta \rvert_{\g{p}} = -\id_{\g{p}}$, we deduce that $[\theta X, X] \in \g{a} = \g{p} \cap \g{g}_0$. Now, using (\ref{cartan:inner}) and the definition of restricted root space, we obtain 
	\begin{align*}
	\langle [\theta X, X], H_{\lambda}  \rangle_{B_{\theta}}  = \langle X, 
[H_{\lambda}, X] \rangle_{B_{\theta}} = |\lambda|^2 \langle X, X \rangle_{B_{\theta}}  = 2 |\lambda|^2 \langle X, X \rangle_{AN}.
	\end{align*}  
	A similar calculation shows that $\langle [\theta X, X], H  \rangle_{B_{\theta}} = 0$ for all $H$ orthogonal to $H_{\lambda}$. Then, we get $[\theta X, X] = 2\langle X, X \rangle_{AN} H_{\lambda}$, which proves (\ref{X:X}). 
	
	For (\ref{ak0}), let $H \in \g{a}$. Clearly, $[\theta X, Y] \in \g{g}_{0}$ by (\ref{bracket:relation}). However, using again (\ref{cartan:inner}) 
and the definition of restricted root space, we obtain  $\langle [\theta X, Y], H \rangle_{B_{\theta}} = \lambda(H) \langle Y, X \rangle_{B_{\theta}} = 0$, which implies $[\theta X, Y] \in \g{k}_0 = \g{g}_{0} \ominus \g{a}$. 
	
	For (\ref{k0:elements}), let $Z \in \g{g}_{\lambda}$ be orthogonal to $X$. Then, using (\ref{cartan:inner}), the Jacobi identity, the assumption that $2 \lambda \notin \Delta^+$, (\ref{X:X}), and the definition of restricted root space, we have 
	\begin{align*}
	\langle [\theta X, Y], [\theta X, Z]  \rangle_{B_{\theta}} & = - \langle Y, [X,[\theta X, Z]] \rangle_{B_{\theta}} = \langle Y, [Z,[X, \theta 
X]] \rangle_{B_{\theta}} \\
	& = \langle Y, [[\theta X, X],Z] \rangle_{B_{\theta}} 
	= 2 |\lambda|^{2} \langle X, X \rangle_{AN} \langle Y, Z \rangle_{B_{\theta}} \\
	& = 4 |\lambda|^{2} \langle X, X \rangle_{AN} \langle Y, Z \rangle_{AN}. 
	\end{align*}
	
	In order to prove (\ref{sh:itself}), we will use equation (\ref{levi}) directly. On the one hand, from (\ref{bracket:relation}), we obtain that the vectors $[\theta X, Y]$ and $[X, \theta Y]$ both belong to $\g{g}_{0}$. Since $\g{n} = \bigoplus_{\lambda \in \Delta^{+}} \g{g}_{\lambda}$, 
we deduce that they have trivial projections onto $\g{n}$. From (\ref{ak0}) we conclude they have also trivial projections onto $\g{a}$ and consequently onto $\g{a} \oplus \g{n}$. On the other hand, the element $[X, Y]$ 
vanishes because of (\ref{bracket:relation}) and the assumption that $2 \lambda \notin \Delta^+$. Then, we deduce
	\begin{align*}
	4 \langle \nabla_{X} Y, Z  \rangle_{AN} & = \langle [X, Y] + [\theta X, Y] - [X, \theta Y], Z  \rangle_{B_{\theta}} = 0
	\end{align*}
	for all $Z \in \g{a} \oplus \g{n}$. This finishes the proof.
\end{proof}

The next result will be used later for calculating principal curvatures.

\begin{lemma} \label{lemma:ad}
	Let $\gamma \in \Delta^{+}$ be the root of minimum level in its non-trivial $\nu$-string, for $\nu \in \Delta^{+}$ non-proportional to $\gamma$. Let $X \in \g{g}_{\gamma}$ and $\xi \in \g{g}_{\nu}$ with $\langle \xi, \xi \rangle_{AN}  = 1$. 
	\begin{enumerate}[{\rm (i)}]
		\item $\ad(\xi)\rvert_{\g{g}_{\gamma}} \colon \g{g}_{\gamma} \to \g{g}_{\gamma + \nu}$ is an injective map preserving the inner product up to a positive constant.\label{lemma:ad:i}
		\item $[\theta \xi, [\xi, X]] = A_{\nu, \gamma} |\nu|^{2} X$.\label{lemma:ad:ii}
		\item $[\theta \xi, [\xi, [\xi, X]]] = (A_{\nu, \gamma+\nu} + A_{\nu, 
\gamma}) |\nu|^{2}[\xi, X]$.\label{lemma:ad:iii}
		\item If $A_{\nu, \gamma} \leq -2$, then 
		\[ [\theta \xi, [\xi, [\xi, [\xi, X]]]] = (A_{\nu, \gamma + 2\nu} + A_{\nu, \gamma+\nu} + A_{\nu, \gamma}) |\nu|^{2}[\xi,[\xi, X]]. \] \label{lemma:ad:iv}
	\end{enumerate}
\end{lemma}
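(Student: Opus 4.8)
The plan is to prove part (\ref{lemma:ad:ii}) first by a direct Jacobi-identity computation, deduce (\ref{lemma:ad:i}) as an immediate corollary, and then obtain (\ref{lemma:ad:iii}) and (\ref{lemma:ad:iv}) by iterating the same computation, feeding in the previously established identity at each step. The common engine throughout is the Jacobi identity together with Lemma \ref{lemma:a,k0}(\ref{X:X}), which gives $[\theta\xi,\xi] = 2\langle\xi,\xi\rangle_{AN} H_\nu = 2 H_\nu$, and the elementary fact that for $W \in \g{g}_\mu$ one has $[H_\nu, W] = \mu(H_\nu)W = \langle\mu,\nu\rangle W$, so that $[[\theta\xi,\xi],W] = 2\langle\mu,\nu\rangle W = A_{\nu,\mu}|\nu|^2 W$.

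For (\ref{lemma:ad:ii}) I would write $[\theta\xi,[\xi,X]] = [[\theta\xi,\xi],X] + [\xi,[\theta\xi,X]]$ via Jacobi. The first summand equals $A_{\nu,\gamma}|\nu|^2 X$ by the computation above (with $\mu = \gamma$). The crucial point is that the second summand vanishes: since $\gamma$ is the root of minimum level in its $\nu$-string, $\gamma - \nu \notin \Delta$, hence $\g{g}_{\gamma-\nu} = \{0\}$ and $[\theta\xi,X] \in \g{g}_{\gamma-\nu} = \{0\}$ by the bracket relation (\ref{bracket:relation}). This boundary condition is really the heart of the lemma and the only place the minimum-level hypothesis is used directly.

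Part (\ref{lemma:ad:i}) then follows at once: for $X, Y \in \g{g}_\gamma$, the adjoint relation (\ref{cartan:inner}) gives $\langle[\xi,X],[\xi,Y]\rangle_{B_\theta} = -\langle X,[\theta\xi,[\xi,Y]]\rangle_{B_\theta} = -A_{\nu,\gamma}|\nu|^2\langle X,Y\rangle_{B_\theta}$. Since the string is non-trivial and $\gamma$ is its minimum, Proposition \ref{p2.48}(\ref{pr:string}) forces $A_{\nu,\gamma} \leq -1$, so $-A_{\nu,\gamma}|\nu|^2 > 0$; thus $\ad(\xi)\rvert_{\g{g}_\gamma}$ preserves the inner product up to this positive constant and is in particular injective. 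For (\ref{lemma:ad:iii}) I would apply Jacobi once more to $[\theta\xi,[\xi,[\xi,X]]]$ with $Y := [\xi,X] \in \g{g}_{\gamma+\nu}$: the term $[[\theta\xi,\xi],Y]$ contributes $A_{\nu,\gamma+\nu}|\nu|^2 Y$, while the term $[\xi,[\theta\xi,Y]] = [\xi,[\theta\xi,[\xi,X]]]$ is handled by substituting the identity from (\ref{lemma:ad:ii}), giving $A_{\nu,\gamma}|\nu|^2[\xi,X]$. Summing yields the claim. Part (\ref{lemma:ad:iv}) is the same step once more, now writing $Z := [\xi,[\xi,X]] \in \g{g}_{\gamma+2\nu}$ (a genuine root space because $A_{\nu,\gamma}\leq -2$ guarantees the string reaches $\gamma+2\nu$), with $[[\theta\xi,\xi],Z]$ giving $A_{\nu,\gamma+2\nu}|\nu|^2 Z$ and $[\xi,[\theta\xi,Z]]$ reduced via (\ref{lemma:ad:iii}).

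I do not expect a genuine obstacle here; the argument is a clean induction. The only points requiring care are bookkeeping ones: checking that $\gamma+\nu$ and (under the hypothesis of (\ref{lemma:ad:iv})) $\gamma+2\nu$ are actually roots so that the relevant brackets are not forced to vanish, and keeping track of signs so that the positive-constant claim in (\ref{lemma:ad:i}) comes out correctly. Both are controlled by the string description in Proposition \ref{p2.48}(\ref{pr:string}) and by the sign $A_{\nu,\gamma} < 0$ coming from the minimum-level hypothesis.
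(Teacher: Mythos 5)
Your proposal is correct and follows essentially the same route as the paper: prove (ii) first via the Jacobi identity, using $[\theta\xi,\xi]=2H_\nu$ from Lemma \ref{lemma:a,k0}(\ref{X:X}) and the vanishing of $[\theta\xi,X]$ forced by $\gamma-\nu\notin\Delta_0$ (the paper also notes explicitly that non-proportionality gives $\gamma-\nu\neq 0$, which your appeal to $\g{g}_{\gamma-\nu}=\{0\}$ tacitly uses); then deduce (i) from (ii) via (\ref{cartan:inner}) and the sign $A_{\nu,\gamma}<0$, and obtain (iii) and (iv) by iterating the same Jacobi computation while substituting the previously proved identity. No gaps.
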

\begin{proof}
	Since $\gamma$ is the root of minimum level in its $\nu$-string, we have 
$\gamma - \nu \notin \Delta$. Since $\gamma$ and $\nu$ are non-proportional, we have $\gamma - \nu \neq 0$. Altogether, we conclude  $\gamma - \nu 
\notin \Delta_0$.
	
	(ii): Using the Jacobi identity, $\gamma - \nu \notin \Delta_0$ and Lemma~\ref{lemma:a,k0}(\ref{X:X}), we obtain
	\[
	[\theta \xi, [\xi, X]]  =  - [X, [\theta \xi, \xi]] - [\xi, [X, \theta 
\xi]] = [[\theta \xi, \xi], X]
	= 2[H_{\nu}, X] = A_{\nu, \gamma} |\nu|^2 X.
	\]
	
	(i): Let $Y \in \g{g}_{\gamma}$. Combining (\ref{cartan:inner}) with (\ref{lemma:ad:ii}), we obtain
	\begin{align*}
	\langle \ad(\xi)X, \ad(\xi)Y \rangle_{AN}  =& - \langle X, \ad(\theta \xi) \circ \ad(\xi)Y \rangle_{AN} = -\langle X, [\theta \xi, [\xi, Y]] \rangle_{AN} \\
	 = & -|\nu|^2 A_{\nu, \gamma} \langle X, Y \rangle_{AN}.
	\end{align*}
	Since the $\nu$-string of $\gamma$ is non-trivial, we have $A_{\nu, \gamma} < 0$ and the assertion follows. 
	
	(iii): Next, using the Jacobi identity, (\ref{lemma:ad:ii}) and Lemma~\ref{lemma:a,k0}(\ref{X:X}), we deduce
	\begin{align*}
	[\theta \xi, [\xi, [\xi, X]]] & = -[[\xi, X], [\theta \xi, \xi]] - [\xi, [[\xi, X], \theta \xi]] \\
	& = 2 [H_{\nu}, [\xi, X]] + [\xi, [\theta \xi, [\xi, X]]] 
	=  (A_{\nu, \gamma + \nu} +A_{\nu, \gamma})|\nu|^2 [\xi, X].
	\end{align*}
	
	(iv): Similar arguments as those used before, together with (\ref{lemma:ad:iii}), give
	\begin{align*}
	[\theta \xi, [\xi, [\xi, [\xi, X]]]] 	&= - [[\xi, [\xi, X]], [\theta \xi, \xi]] - [\xi, [[\xi, [\xi, X]], \theta \xi]] \\
	& =  2[H_{\nu}, [\xi, [\xi, X]]] + [\xi, [\theta \xi, [\xi, [\xi, X]]]]  \\
	& = A_{\nu, \gamma + 2\nu}|\nu|^{2} [\xi, [\xi, X]] 
	+ (A_{\nu, \gamma + \nu} +A_{\nu, \gamma})|\nu|^2 [\xi, [\xi, X]] \\ 
	& = (A_{\nu, \gamma + 2\nu} + A_{\nu, \gamma+\nu} + A_{\nu, \gamma}) |\nu|^{2}[\xi,[\xi, X]]. 
	  \end{align*} 
This finishes the proof.
\end{proof}

\section{Construction of CPC submanifolds} \label{construction:examples}

In this section we construct new examples of CPC submanifolds in the rank 
$2$ symmetric spaces of non-compact type $SL_3(\mathbb{R})/SO_3$, $SL_3(\mathbb{C})/SU_3$, $SL_3(\mathbb{H})/Sp_3\ (=SU^*_6/Sp_3)$ and  $E^{-26}_6/F_4$. These spaces are precisely the non-compact symmetric spaces whose restricted root system is of type $A_2$. The new examples will provide the building blocks for further new examples in other non-compact symmetric spaces, via the so-called canonical extension method introduced in \cite{BT13} and studied further in \cite{miguel}. We emphasize that the CPC property is not preserved in general under the canonical extension method 
(an example will be given later). A fundamental ingredient in our investigations will be a decomposition of the tangent space of a CPC submanifold 
into subspaces that are invariant under the shape operator.

We continue using the notations introduced in Section~\ref{preliminaries}. Our construction is based on a suitable choice of a linear subspace $V$ of $\bigoplus_{\alpha \in \Pi'} \g{g}_{\alpha} \subseteq \g{n}$. The nilpotent subalgebra $\g{n}$ has a natural gradation that is generated by $\bigoplus_{\alpha \in \Pi} \g{g}_{\alpha}$. Thus, if we remove a linear subspace $V$ from $\bigoplus_{\alpha \in \Pi'} \g{g}_{\alpha}$, that is, consider the subspace $\g{n} \ominus V$, we get a subalgebra of $\g{n}$. We then define the subspace
\[
\g{s} = \g{a} \oplus (\g{n} \ominus V)
\]
of $\g{a} \oplus \g{n}$. Unfortunately, this subspace is in general not a 
subalgebra of $\g{a} \oplus \g{n}$. Assume for the moment that $\g{s}$ is 
a subalgebra of $\g{a} \oplus \g{n}$ and choose a vector $X \in \g{s}$ of 
the form $X = \sum_{\alpha \in \Pi'} X_{\alpha}$. Let $\beta \in \Pi'$ and
$0 \neq H \in \g{a} \ominus \left(\bigoplus_{\alpha \in \Pi \backslash \{\beta\}} \mathbb{R} H_{\alpha}\right)$.
Since $\g{s}$ is a subalgebra of $\g{a} \oplus \g{n}$ and $\g{a} \subset \g{s}$, we get $[H, X]  =  \sum_{\alpha \in \Pi'} [H,X_\alpha] =  \sum_{\alpha \in \Pi'} \alpha(H)X_\alpha = \beta (H) X_{\beta} \in \g{s}$. 
Since $H \neq 0$ is orthogonal to the vector spaces $\mathbb{R} H_{\alpha}$ for all $\alpha \in \Pi \backslash \{ \beta \}$, we must have $\beta (H) \neq 0$ and hence $X_\beta \in \g{s}$. 
Thus, if $\sum_{\alpha \in \Pi'} X_{\alpha} \in \g{s}$, we deduce that $X_{\alpha} \in \g{s}$ for all $\alpha \in \Pi'$. Consequently, if $\g{s}$ is a subalgebra of $\g{a} \oplus \g{n}$, then $V$ is of the form 
\begin{equation} \label{V:decomposition}
V = \bigoplus_{\alpha \in \psi} V_{\alpha}
\end{equation}
with $V_{\alpha} \subseteq \g{g}_{\alpha}$ and $\psi \subseteq \Pi'$. Without loss of generality, we can assume that $V_{\alpha} \neq \{0\}$ for each $\alpha \in \psi$. 

We assume from now on that $\g{s} = \g{a} \oplus (\g{n} \ominus V)$ is a subalgebra of $\g{a} \oplus \g{n}$ and that $V$ is of the form (\ref{V:decomposition}).  Let $S$ be the connected closed subgroup of $AN$ with Lie algebra $\g{s}$. The orbit $S \cdot o$ of $S$ through $o$ is a connected homogeneous submanifold of the symmetric space $M = G/K \cong AN$. We want to understand when this orbit is a CPC submanifold.

The simplest situation occurs when $V$ is contained in a single root space $\g{g}_\alpha$, $\alpha \in \Pi^\prime$. Let $m_\alpha = \dim(\g{g}_\alpha)$ and $k = m_\alpha - \dim(V)$. The orbit through $o$ of the connected closed subgroup of $AN$ with Lie algebra ${\mathbb R}H_\alpha \oplus \g{g}_\alpha$ is a real hyperbolic space ${\mathbb R}H^{m_\alpha + 1}$, 
embedded in $M$ as a totally geodesic submanifold. The orbit through $o$ of the connected closed subgroup of $AN$ with Lie algebra ${\mathbb R}H_\alpha \oplus (\g{g}_\alpha \ominus V)$ is a real hyperbolic space ${\mathbb R}H^{k + 1}$, embedded in ${\mathbb R}H^{m_\alpha + 1}$ as a totally geodesic submanifold. This  ${\mathbb R}H^{k + 1}$ is the singular orbit of a cohomogeneity one action on ${\mathbb R}H^{m_\alpha + 1}$. This cohomogeneity one action admits a canonical extension to a cohomogeneity one action on $M$ (see \cite{BT13} for details). The singular orbit of this cohomogeneity one action on $M$, which is the canonical extension of ${\mathbb R}H^{k + 1}$, then must be a CPC submanifold since the slice representation at any point of the singular orbit acts transitively on the unit sphere in the normal bundle.  
We can also give a slightly more complicated argument in this situation, which has the advantage though that we can apply it to more general situations. The generic orbits are homogeneous hypersurfaces, hence have the properties that they are both isoparametric and have constant principal curvatures. By applying the result by Ge and Tang that we mentioned in the introduction, we can deduce that the canonical extension of the ${\mathbb 
R}H^{k + 1}$ must be a CPC submanifold. It is this line of argument that we are going to apply for producing our new examples.

Back to the general situation. The orbit $S \cdot o$ is  a homogeneous submanifold and therefore it suffices to study its shape operator $\Ss$ at the point $o$. We will now investigate the shape operator in Lie algebraic terms by using equation (\ref{levi}). In our situation we need to analyze the equation
\begin{equation}\label{levi:examples}
4 \langle \nabla_{X} \xi, Z  \rangle_{AN} =  \langle [X, \xi] + (1-\theta)[\theta X, \xi], Z  \rangle_{B_{\theta}}
\end{equation}
for unit normal vectors $\xi \in V$, tangent vectors $X \in \g{s}$ and all $Z \in \g{a} \oplus \g{n}$.

We start by choosing $X \in \g{a} \subset \g{p}$. Then $\theta X = -X$ and
\[
[X, \xi] + [\theta X, \xi] - [X, \theta \xi] = - [X, \theta \xi] \in \bigoplus_{\alpha \in \psi} \g{g}_{-\alpha}.  
\]
Hence $[X, \theta \xi]$ has trivial projection onto $\g{a} \oplus \g{n}$. 
Therefore, $\nabla_{X} \xi = 0$ for all $X \in \g{a}$ and all normal vectors $\xi \in V$. In other words, for each unit normal vector $\xi$, $0$ 
is a principal curvature of $S \cdot o$ with respect to $\xi$ and $\g{a}$ 
is contained in the $0$-eigenspace. This is also clear from a geometric viewpoint. The orbit $A \cdot o$ is a Euclidean space ${\mathbb E}^r$ of dimension $r = {\rm rk}(M)$ and embedded in $M$ as a totally geodesic flat submanifold, a so-called maximal flat in $M$. Since $\g{a} \subset \g{s}$, we have $A \cdot o \subset S \cdot o$, and the assertion follows.
In particular, the maximal flat $A \cdot o = {\mathbb E}^r$ is a totally geodesic submanifold of $S \cdot o$. 

Therefore, we now need to examine the terms involved in (\ref{levi:examples}) when $X \in \g{n} \ominus V$. On the one hand, since $X,\xi \in\g{n}$, we have $[X, \xi] \in \g{n}$ and hence $[X, \xi]$ has trivial projection onto $\g{a}$. On the other hand, we will see that the elements $[\theta X, \xi]$ and $[X, \theta \xi]$ involved in (\ref{levi:examples}) have also trivial projections onto $\g{a}$. Moreover, we will justify that $[\theta X, \xi]$ must have trivial projection onto $\g{a} \oplus \g{n}$. 

Let $X \in \g{n} \ominus V$ and decompose $X$ into  $X = \sum_{\lambda \in \Delta^{+}} X_{\lambda}$ with $X_{\lambda} \in \g{g}_{\lambda}$. We decompose $\xi$ into $\xi = \sum_{\alpha \in \psi} \xi_{\alpha}$ with $\xi_{\alpha} \in V_{\alpha}$. Let $\alpha \in \psi$ and $\beta \in \Pi$. We will analyze the elements $[\theta  X_{\beta}, \xi_{\alpha} ]$ and $[X_{\beta}, \theta \xi_{\alpha}]$. Since $\alpha,\beta \in \Pi$, we have $\pm(\alpha - \beta) \notin \Delta$. Using (\ref{bracket:relation}) we deduce $[\theta  X_{\beta}, \xi_{\alpha} ] = 0 = [X_{\beta}, \theta \xi_{\alpha}]$ whenever $\alpha \neq \beta$. If $\beta = \alpha$, since $\langle X_{\alpha}, \xi_{\alpha} \rangle_{AN} = 0$ for all $\alpha \in \psi$ because of (\ref{V:decomposition}), we have $[\theta  X_{\alpha}, \xi_{\alpha} ] \in \g{k}_0$ and $[X_{\alpha}, \theta \xi_{\alpha}] \in \g{k}_0$ by Lemma \ref{lemma:a,k0}(\ref{ak0}), and hence they have trivial projections onto $\g{a} \oplus \g{n}$. Thus we conclude that $[\theta X_\beta, 
\xi]$ and $[X_\beta, \theta \xi]$ have trivial projections onto $\g{a} \oplus \g{n}$. Let $\lambda \in \Delta^+\setminus\Pi$. Then $\alpha - \lambda \notin \Delta^+_0$ and hence $[\theta X_{\lambda}, \xi_{\alpha}] \in \g{g}_{\alpha -\lambda}$ has trivial projection onto $\g{a} \oplus \g{n}$. 
Altogether this implies that $[\theta X, \xi]$ has trivial projection onto $\g{a} \oplus \g{n}$. 
We also see that $[X_\lambda, \theta \xi_{\alpha}] \in \g{g}_{\lambda - \alpha}$ has trivial projection onto $\g{g}_0$ and, since $\g{a} \subseteq 
\g{g}_0$, also onto $\g{a}$, which implies that $[X, \theta \xi]$ has trivial projection onto $\g{a}$. Then the Levi-Civita connection becomes
\begin{equation*}
2\langle \nabla_{X} \xi, H+ Y \rangle_{AN} =  \langle [X, \xi] - [X, \theta \xi], Y \rangle_{AN}
\end{equation*}
for $H \in \g{a}$, $Y \in \g{n}$, $\xi \in V$ and $X \in \g{n} \ominus V$. We saw above that $[X, \theta \xi] \in \g{k}_0 \oplus \g{n}$. Moreover, $0 \neq [X, \theta \xi] \in \g{k}_0$ is possible only if there exists $\alpha \in \psi$ with $X_\alpha \neq 0 \neq \xi_\alpha$. In this situation, since $X_\alpha,\xi_\alpha$ are orthogonal for each $\alpha \in \psi$, we have $\nabla_{X_\alpha}{\xi_\alpha} = 0$ by Lemma \ref{lemma:a,k0}(\ref{sh:itself}). Otherwise, the above equation yields
\begin{equation} \label{levi:connection:examples}
2 \nabla_{X} \xi =   [X, \xi] - [X, \theta \xi]
\end{equation}
for all $\xi \in V$ and $X \in \g{n} \ominus V$ with $[X_\alpha, \theta \xi_\alpha] = 0$ for all $\alpha \in \psi$.
In particular, if $\xi \in \g{g}_\alpha$ and $X \in \g{n} \ominus V$, equations (\ref{definition:shape:operator}) and (\ref{levi:connection:examples}) imply that the shape operator $\Ss_{\xi}$ with respect to $\xi$ of the submanifold $S \cdot o$ can be written as
\begin{equation}\label{shape}
2 \Ss_{\xi} X = -  \left([X, \xi] -[X, \theta \xi]\right)^{\top} =  [(1-\theta)\xi, X ]^{\top}.
\end{equation}
Note that $\theta(\xi - \theta \xi) = -(\xi - \theta \xi)$ and hence $\frac{1}{2}(1-\theta)\xi \in \g{p}$ is the orthogonal projection of $\xi$ onto $\g{p}$ with respect to $B_\theta$.

Before considering the examples introduced in the Main Theorem, we will study the behavior of the Levi-Civita connection in terms of the concept of string. Let $\gamma \in \Delta^{+}$ be the root of minimum level in its 
non-trivial $\nu$-string, for $\nu \in \Delta^{+}$ non-proportional to $\gamma$. For each unit vector $\xi \in \g{g}_{\nu}$ we define 
\begin{equation}\label{definition:phi}
\phi_{\xi} = |\nu|^{-1} (-A_{\nu, \gamma})^{-1/2} \ad(\xi)\ ,\ 
\phi_{\theta \xi} = -|\nu|^{-1} (-A_{\nu, \gamma})^{-1/2} \ad(\theta\xi).
\end{equation}
From Lemma~\ref{lemma:ad}(\ref{lemma:ad:i}),(\ref{lemma:ad:ii}) we easily deduce: 

\begin{lemma}\label{lemma:phi:injective}
	Let $\gamma \in \Delta^{+}$ be the root of minimum level in its non-trivial $\nu$-string, for $\nu \in \Delta^{+}$ non-proportional to $\gamma$. Then:
	\begin{enumerate}[{\rm (i)}]
		\item $\phi_{\xi} \rvert_{\g{g}_{\gamma}} \colon \g{g}_{\gamma} \to  \g{g}_{\gamma + \nu}$ is a linear isometry onto $\phi_\xi(\g{g}_\gamma) = 
[\xi,\g{g}_\gamma]$.
		\item $(\phi_{\theta \xi} \circ \phi_{\xi})\rvert_{\g{g}_{\gamma}} = \id_{\g{g}_{\gamma}}$.
	\end{enumerate}
\end{lemma}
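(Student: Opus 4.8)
The plan is to read off both claims directly from the definition of $\phi_\xi$ and $\phi_{\theta\xi}$ in \eqref{definition:phi} together with the computations already packaged in Lemma~\ref{lemma:ad}. The key point to observe first is that the hypotheses of the present lemma are \emph{identical} to those of Lemma~\ref{lemma:ad}: $\gamma$ is the root of minimum level in its non-trivial $\nu$-string, so $A_{\nu,\gamma}<0$ and the normalizing constant $(-A_{\nu,\gamma})^{-1/2}$ in \eqref{definition:phi} is a well-defined positive real number. This is what makes $\phi_\xi$ a genuine rescaling of $\ad(\xi)$ rather than a degenerate map.

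For part (i), I would start from Lemma~\ref{lemma:ad}(\ref{lemma:ad:i}), which already tells us that $\ad(\xi)\rvert_{\g{g}_\gamma}\colon \g{g}_\gamma \to \g{g}_{\gamma+\nu}$ is injective with image $[\xi,\g{g}_\gamma]$ and preserves the inner product up to a positive constant. The explicit constant is supplied by the proof of that lemma: for $X,Y\in\g{g}_\gamma$ one has $\langle \ad(\xi)X, \ad(\xi)Y\rangle_{AN} = -|\nu|^2 A_{\nu,\gamma}\langle X,Y\rangle_{AN}$. Dividing $\ad(\xi)$ by the factor $|\nu|(-A_{\nu,\gamma})^{1/2}$, exactly as in \eqref{definition:phi}, rescales this inner product by $\bigl(|\nu|^2(-A_{\nu,\gamma})\bigr)^{-1} = \bigl(-|\nu|^2 A_{\nu,\gamma}\bigr)^{-1}$, so that $\langle \phi_\xi X, \phi_\xi Y\rangle_{AN} = \langle X,Y\rangle_{AN}$. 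Thus $\phi_\xi\rvert_{\g{g}_\gamma}$ is a linear isometry, and its image is $[\xi,\g{g}_\gamma]=\phi_\xi(\g{g}_\gamma)$ since rescaling by a nonzero constant does not change the image.

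For part (ii), the natural route is to compute $(\phi_{\theta\xi}\circ\phi_\xi)\rvert_{\g{g}_\gamma}$ on an arbitrary $X\in\g{g}_\gamma$ using the definitions. By \eqref{definition:phi} the composite equals $-|\nu|^{-2}(-A_{\nu,\gamma})^{-1}\,\ad(\theta\xi)\ad(\xi)X = -|\nu|^{-2}(-A_{\nu,\gamma})^{-1}\,[\theta\xi,[\xi,X]]$, and Lemma~\ref{lemma:ad}(\ref{lemma:ad:ii}) evaluates the inner bracket as $[\theta\xi,[\xi,X]] = A_{\nu,\gamma}|\nu|^2 X$. Substituting gives $-|\nu|^{-2}(-A_{\nu,\gamma})^{-1}\,A_{\nu,\gamma}|\nu|^2 X = X$, so $(\phi_{\theta\xi}\circ\phi_\xi)\rvert_{\g{g}_\gamma}=\id_{\g{g}_\gamma}$, as claimed.

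Since every ingredient is already established, I do not anticipate a genuine obstacle here; the lemma is essentially a matter of bookkeeping with the normalizing constants. The only thing to be careful about is the sign conventions: the minus sign in the definition of $\phi_{\theta\xi}$ must be tracked so that the two minus signs (one from the definition, one from $A_{\nu,\gamma}<0$ appearing in the denominator) combine correctly to yield $+\id$ rather than $-\id$. Apart from that sign check, the proof is a direct substitution, which is why the statement is phrased as something one \emph{easily deduces} from Lemma~\ref{lemma:ad}.
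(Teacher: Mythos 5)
Your proposal is correct and follows essentially the same route as the paper: the paper simply states that the lemma is "easily deduced" from Lemma~\ref{lemma:ad}(\ref{lemma:ad:i}),(\ref{lemma:ad:ii}) together with the definition \eqref{definition:phi}, and your argument is exactly that deduction with the normalizing constants and signs written out. The sign check in part (ii) and the observation that $A_{\nu,\gamma}<0$ (so the constant in \eqref{definition:phi} is well defined) are precisely the details the paper leaves implicit.
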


The next result will be useful for calculating principal curvatures explicitly.

\begin{proposition}\label{proposition:main1}
	Let $\gamma \in \Delta^{+}$ be the root of minimum level in its $\nu$-string, for $\nu \in \Delta^{+}$ satisfying $A_{\nu, \gamma} = -1$, and $\xi \in \g{g}_{\nu}$ be a unit vector with respect to $\langle \cdot, \cdot \rangle_{AN}$. Then $\phi_{\xi}$ and $\phi_{\theta \xi}$ are inverse linear isometries when restricted to $\g{g}_{\gamma}$ and $\g{g}_{\gamma+\nu}$, respectively. Moreover, for each $X \in \g{g}_{\gamma}$ we have
	\[
	\nabla_X \xi = - \frac{|\nu|}{2} {\phi_{\xi}(X)}\  \mbox{  and  } \ 
	\nabla_{\phi_{\xi}(X)} \xi = - \frac{|\nu|}{2} X.
	\]
\end{proposition}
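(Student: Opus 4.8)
The plan is to treat the three assertions in turn, basing everything on the ambient Koszul identity (\ref{levi}) and on the fact that $A_{\nu,\gamma}=-1$ pins the $\nu$-string of $\gamma$ down to exactly $\gamma,\gamma+\nu$. By Proposition~\ref{p2.48}(\ref{pr:string}) this string has no further members, so $\gamma-\nu\notin\Delta_0$ and $\gamma+2\nu\notin\Delta$, while Lemma~\ref{root:spaces:dimension}(i) gives $\dim\g{g}_\gamma=\dim\g{g}_{\gamma+\nu}$. For the isometry claim I would start from Lemma~\ref{lemma:phi:injective}(i), which makes $\phi_\xi|_{\g{g}_\gamma}$ a linear isometry onto $[\xi,\g{g}_\gamma]\subseteq\g{g}_{\gamma+\nu}$; the equality of dimensions upgrades injectivity to surjectivity, so $\phi_\xi|_{\g{g}_\gamma}\colon\g{g}_\gamma\to\g{g}_{\gamma+\nu}$ is a bijective isometry. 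Lemma~\ref{lemma:phi:injective}(ii) then identifies $\phi_{\theta\xi}|_{\g{g}_{\gamma+\nu}}$ as its two-sided inverse, which settles the first assertion and records the identity $\phi_{\theta\xi}(\phi_\xi(X))=X$ for later use.

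For the first connection formula I would substitute $X\in\g{g}_\gamma$ and $\xi\in\g{g}_\nu$ into (\ref{levi}) and track root spaces. The terms $[\theta X,\xi]\in\g{g}_{\nu-\gamma}$ and $\theta[\theta X,\xi]\in\g{g}_{\gamma-\nu}$ both vanish because $\gamma-\nu\notin\Delta_0$, leaving only $[X,\xi]\in\g{g}_{\gamma+\nu}\subset\g{n}$. Projecting this surviving term onto $\g{a}\oplus\g{n}$ and converting $\langle\cdot,\cdot\rangle_{B_\theta}$ into $\langle\cdot,\cdot\rangle_{AN}$ (the factor $2$ on $\g{n}$) yields $\nabla_X\xi=\frac{1}{2}[X,\xi]$, which is precisely $-\frac{|\nu|}{2}\phi_\xi(X)$ by the definition (\ref{definition:phi}) of $\phi_\xi$, using that $(-A_{\nu,\gamma})^{-1/2}=1$.

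The second formula is analogous but relies on the negative-root term. Setting $Y=\phi_\xi(X)\in\g{g}_{\gamma+\nu}$ and feeding $Y,\xi$ into (\ref{levi}), the bracket $[Y,\xi]\in\g{g}_{\gamma+2\nu}$ vanishes since the string ends at $\gamma+\nu$, and $[\theta Y,\xi]\in\g{g}_{-\gamma}$ is $B_\theta$-orthogonal to $\g{a}\oplus\g{n}$, so the only contribution comes from $-\theta[\theta Y,\xi]\in\g{g}_\gamma$. Using $\theta[\theta Y,\xi]=[Y,\theta\xi]$ I obtain $\nabla_Y\xi=-\frac{1}{2}[Y,\theta\xi]$, and rewriting $[Y,\theta\xi]$ through $\phi_{\theta\xi}$ and invoking $\phi_{\theta\xi}(Y)=X$ from the isometry step gives $[Y,\theta\xi]=|\nu|X$, hence $\nabla_Y\xi=-\frac{|\nu|}{2}X$.

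The one point requiring genuine care is the bookkeeping in (\ref{levi}): the operator $(1-\theta)$ causes the negative-root bracket $[\theta Y,\xi]\in\g{g}_{-\gamma}$, which contributes nothing by itself, to re-enter as the honest tangential term $\theta[\theta Y,\xi]\in\g{g}_\gamma$, and this $\theta$-reflection is exactly what produces the second formula. Beyond this, and keeping consistent track of the two inner products, everything reduces to the string being $\{\gamma,\gamma+\nu\}$ together with Lemma~\ref{lemma:phi:injective}.
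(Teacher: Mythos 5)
Your proof is correct and follows essentially the same route as the paper: the isometry/inverse claim via Lemma~\ref{root:spaces:dimension} and Lemma~\ref{lemma:phi:injective}, and the two connection formulas by killing the brackets that land outside $\Delta_0$ (using $\gamma-\nu\notin\Delta_0$ and $\gamma+2\nu\notin\Delta$) and rewriting the survivors through (\ref{definition:phi}). The only cosmetic difference is that you re-derive the tangential formula $2\nabla_X\xi=[X,\xi]-[X,\theta\xi]$ directly from the Koszul identity (\ref{levi}), whereas the paper simply cites its ready-made consequence (\ref{levi:connection:examples}); the computation is the same.
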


\begin{proof}
	From Lemma~\ref{root:spaces:dimension} we deduce $\dim(\g{g}_{\gamma}) = 
\dim(\g{g}_{\gamma + \nu})$. Lemma~\ref{lemma:phi:injective} then implies 
that $\phi_{\xi} \rvert_{\g{g}_{\gamma}} \colon \g{g}_{\gamma} \to  \g{g}_{\gamma + \nu}$ is a linear isometry onto $\g{g}_{\gamma + \nu}$ and $(\phi_{\xi} \rvert_{\g{g}_{\gamma}})^{-1} = \phi_{\theta\xi} \rvert_{\g{g}_{\gamma+\nu}}$. Since $\gamma$ is the root of minimum level in its $\nu$-string and $A_{\nu, \gamma} = -1$, we have $\gamma - \nu \notin \Delta_0$. Using  (\ref{levi:connection:examples}), the fact that $\gamma - \nu \notin \Delta_0$, and then (\ref{definition:phi}), we deduce 
	\[
	2 \nabla_X \xi =   [X, \xi] - [X, \theta \xi] =  [X, \xi] = - |\nu| {\phi_{\xi}(X)},
	\]
	for  unit vectors $\xi \in \g{g}_{\nu}$ and vectors $X \in \g{g}_{\gamma}$. Finally, using  (\ref{levi:connection:examples}), the fact that $\gamma + 2 \nu \notin \Delta$,  (\ref{definition:phi}) and  then Lemma~\ref{lemma:phi:injective}, we obtain
	\begin{align*}
	2 \nabla_{\phi_{\xi}(X)} \xi = &   [\phi_{\xi}(X), \xi] - [\phi_{\xi}(X), \theta \xi] = - [\phi_{\xi}(X), \theta \xi]\\ = &  - |\nu| (\phi_{\theta \xi} \circ \phi_{\xi}) (X) = -|\nu| X,
	\end{align*}
	for a unit vector $\xi \in \g{g}_{\nu}$ and $X \in \g{g}_{\gamma}$.
\end{proof}

After these considerations we shall focus now on the examples introduced in the Main Theorem. Consider a symmetric space $G/K$ of non-compact type 
with at least two simple roots, say $\alpha_0$ and $\alpha_1$, that are connected by a single edge in its Dynkin diagram.  
Consider the subalgebra $\g{s}  = \g{a} \oplus (\g{n} \ominus V)$ with $\psi = \{\alpha_0, \alpha_1\}$ and $V \subseteq \g{g}_{\alpha_0} \oplus \g{g}_{\alpha_1}$. Let $\xi \in V$ be a unit vector and $X \in \g{g}_{\lambda}^{\top}$, where $\g{g}_{\lambda}^{\top}$ denotes the orthogonal projection of $\g{g}_{\lambda}$ onto $\g{n} \ominus V$ for $\lambda \in \Delta^{+}$. From (\ref{shape}) and (\ref{bracket:relation}) we obtain
\begin{equation*}
\Ss_{\xi} X  \in (\g{g}_{\lambda + \alpha_0}^{\top} \oplus \g{g}_{\lambda 
+ \alpha_1}^{\top}) \oplus (\g{g}_{\lambda - \alpha_0}^{\top} \oplus \g{g}_{\lambda - \alpha_1}^{\top}).
\end{equation*}
This shows that we need to understand how the shape operator $\Ss$ relates the different root spaces of positive roots. 

In order to clarify this situation, we introduce a generalization of the concept of $\alpha$-string. For ${\alpha_0}, {\alpha_1} \in \Delta$ and $\lambda \in \Delta_0$ we define the $({\alpha_0}, {\alpha_1})$-string containing $\lambda$ as the set of all elements in $\Delta_0$ of the form $\lambda + n {\alpha_0} + m {\alpha_1}$ with $n,m \in \mathbb{Z}$. This leads to the following equivalence relation on $\Delta^{+}$. We say that two 
roots $\lambda_1, \lambda_2 \in \Delta^{+}$ are $(\alpha_0, \alpha_1)$-related if $\lambda_1 - \lambda_2 = n {\alpha_0} + m {\alpha_1}$ for some 
$n,m \in \mathbb{Z}$. Therefore, the equivalence class $[\lambda]_{(\alpha_0, \alpha_1)}$ of the root $\lambda \in \Delta^{+}$ consists of the elements which may be written as $\lambda +n {\alpha_0} + m {\alpha_1}$ for some $n,m \in \mathbb{Z}$. We will write  $[\lambda]$ for this equivalence class, taking into account that this class depends on the roots $\alpha_0$ and $\alpha_1$ defining the string. Put $\Delta^{+} / \sim$ for the set of equivalence classes. The family $\{[\lambda]\}_{\lambda \in \Delta^{+}}$ constitutes a partition of $\Delta^{+}$. 

Using this notation, we can now write
\begin{equation}\label{eq:shapeclasses}
\Ss_{\xi} \left( \bigoplus_{\gamma \in [\lambda]} \g{g}_{\gamma}^{\top} \right) \subseteq \bigoplus_{\gamma \in [\lambda]} \g{g}_{\gamma}^{\top} \mbox{ for all } \lambda \in \Delta^{+}.
\end{equation}
In other words, for each $\lambda \in \Delta^{+}$ the subspace $\bigoplus_{\gamma \in [\lambda]} \g{g}_{\gamma}^{\top}$ is an $\Ss_{\xi}$-invariant subspace of the tangent space $\g{s}$. Clearly, $S \cdot o$ is a CPC submanifold if and only if the eigenvalues of $\Ss_\xi$ are independent of the unit normal vector $\xi$ when restricted to each of these invariant subspaces  $\bigoplus_{\gamma \in [\lambda]} \g{g}_{\gamma}^{\top}$ for every $\lambda \in \Delta^{+}$. Thus it suffices to consider the orthogonal 
decomposition
\begin{equation}\label{invariant:decomposition}
\g{n} \ominus V  = \bigoplus_{\lambda \in \Delta^{+} / \sim} \left(\bigoplus_{\gamma \in [\lambda]} \g{g}_{\gamma}^{\top} \right)
\end{equation}
and to study the shape operator when restricted to each of these $\Ss_{\xi}$-invariant subspaces. These invariant subspaces will be determined more explicitly in Lemma \ref{structure:strings} by using the concept of $(\alpha_0, \alpha_1)$-string of $\lambda$. However, note that one of them is very easy to determine. Since $\alpha_0$ and $\alpha_1$ are simple roots and connected by a single edge in the Dynkin diagram, the $(\alpha_0, \alpha_1)$-string of $\alpha_0$ is just the set of roots of a rank 2 symmetric space of non-compact type whose Dynkin diagram is of type $A_2$. Therefore, studying the shape operator $\Ss_{\xi}$ when restricted to the $\Ss_{\xi}$-invariant subspace $\bigoplus_{\gamma \in [\alpha_0]} \g{g}_{\gamma}^{\top}$ is equivalent to studying the CPC property of the submanifold $S \cdot o$ in one of the symmetric spaces $SL_{3}(\mathbb{R})/SO_{3}$, $SL_{3}(\mathbb{C})/SU_{3}$, $SL_{3}(\mathbb{H})/Sp_{3}$ or  $E^{-26}_6/F_4$. The remaining part of this section is devoted to the study of the shape operator of $S \cdot o$ when restricted to the vector space $\bigoplus_{\gamma \in [\alpha_0]} \g{g}_{\gamma}^{\top}$, or equivalently, to classifying CPC submanifolds in these rank $2$ symmetric spaces under the hypotheses of the Main Theorem.

We restrict now to the rank $2$ symmetric spaces of non-compact type whose Dynkin diagram is of type $A_2$. In this case  we have $\Delta^{+} = \{\alpha_0, \alpha_1, \alpha_0 + \alpha_1\}$ and $|\alpha_0| = |\alpha_1|=|\alpha_0 + \alpha_1| = \sqrt{2}$. Moreover, from Lemma \ref{root:spaces:dimension} we see that $\dim(\g{g}_{\alpha_0}) =\dim(\g{g}_{\alpha_1}) = \dim(\g{g}_{\alpha_0+\alpha_1})$. In line with the construction that we explained at the beginning of this section, we consider the subalgebra 
\[
\g{s} = \g{a} \oplus (\g{g}_{\alpha_0} \ominus V_{\alpha_0}) \oplus (\g{g}_{\alpha_1} \ominus V_{\alpha_1}) \oplus \g{g}_{\alpha_0 + \alpha_1}
\]
with $V = V_{\alpha_0} \oplus V_{\alpha_1}$ and $\{0\} \neq V_{\alpha_k} \subseteq \g{g}_{\alpha_k}$, $k \in \{0,1\}$. We put $V_k = V_{\alpha_k}$ and $T_k = \g{g}_{\alpha_k} \ominus V_k$. If $U_1,U_2$ are linear subspaces of $\g{g}$, we denote by $[U_1, U_2]$ the linear subspace of $\g{g}$ spanned by  $\{[u_1, u_2] : u_1 \in U_1, u_2 \in U_2 \}$. The following result will help us computing the shape operator of $S \cdot o$ explicitly.

\begin{lemma}\label{lemma:auxiliar:characterisation}
	Let $0 \neq \xi_k \in V_k$, $k \in \{0,1\}$. Then  
	\begin{equation}\label{lemma:auxiliar:characterisation:decomposition}
	\g{g}_{\alpha_0 + \alpha_1} = \phi_{\xi_0} (V_1) \oplus \phi_{\xi_0} (T_1)  = \phi_{\xi_1} (V_0) \oplus \phi_{\xi_1} (T_0)
	\end{equation}
	are orthogonal decompositions of $\g{g}_{\alpha_0 + \alpha_1}$. Moreover, if $\dim(V_0) = \dim(V_1) = \dim([V_0, V_1])$, then:
	\begin{enumerate}[{\rm (i)}]
		\item  $\phi_{\xi_0} (V_1) = \phi_{\xi_1} (V_0) = [V_0, V_1]$ and \\ $\phi_{\xi_0} (T_1) = \phi_{\xi_1} (T_0) = [V_0, T_1] = [V_1, T_0]$. \label{lemma:auxiliar:characterisation:i}
		\item If $T_k \neq \{0\}$, then $\dim(T_k) \geq \dim(V_k)$. \label{lemma:auxiliar:characterisation:iii}
		\item The maps $(\phi_{\theta \xi_0} \circ \phi_{\xi_1})\rvert_{T_0} \colon T_0 \to T_1$ and $(\phi_{\theta \xi_1} \circ \phi_{\xi_0})\rvert_{T_1} \colon T_1 \to T_0$ are linear isometries and
		\[
		\g{s} \ominus (\g{a} \oplus V)  = T_0 \oplus T_1 \oplus [V_0,T_1] \oplus [V_0, V_1] 
		\]
		is an orthogonal decomposition of $\g{s} \ominus (\g{a} \oplus V)$. \label{lemma:auxiliar:characterisation:ii}
	\end{enumerate}
\end{lemma}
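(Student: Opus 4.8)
The plan is to reduce everything to the single fact that, in the $A_2$ setting, the maps $\phi_{\xi_0}$ and $\phi_{\xi_1}$ are \emph{global} linear isometries between the relevant root spaces. First I would record the $A_2$ bookkeeping: since $\Delta^{+}=\{\alpha_0,\alpha_1,\alpha_0+\alpha_1\}$ with all roots of equal length and $A_{\alpha_0,\alpha_1}=A_{\alpha_1,\alpha_0}=-1$, the root $\alpha_1$ (resp.\ $\alpha_0$) has minimum level in its $\alpha_0$-string (resp.\ $\alpha_1$-string), so Proposition~\ref{proposition:main1} applies and gives that $\phi_{\xi_0}\rvert_{\g{g}_{\alpha_1}}\colon\g{g}_{\alpha_1}\to\g{g}_{\alpha_0+\alpha_1}$ and $\phi_{\xi_1}\rvert_{\g{g}_{\alpha_0}}\colon\g{g}_{\alpha_0}\to\g{g}_{\alpha_0+\alpha_1}$ are linear isometries onto, with inverses $\phi_{\theta\xi_0}$, $\phi_{\theta\xi_1}$ by Lemma~\ref{lemma:phi:injective}; here I normalise $\xi_k$ to be a unit vector, which affects none of the subspace identities below. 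Since $\g{g}_{\alpha_1}=V_1\oplus T_1$ and $\g{g}_{\alpha_0}=V_0\oplus T_0$ are orthogonal, pushing these splittings through the isometries yields the orthogonal decompositions of $\g{g}_{\alpha_0+\alpha_1}$ in~(\ref{lemma:auxiliar:characterisation:decomposition}) with no hypothesis on dimensions. I will abbreviate $m=\dim\g{g}_{\alpha_k}=\dim\g{g}_{\alpha_0+\alpha_1}$ and $d=\dim V_0=\dim V_1$.

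For assertion (i) I expect the decisive point to be a \emph{rigidity} observation: $\phi_{\eta_0}(V_1)$ is independent of the unit vector $\eta_0\in V_0$. Indeed $\phi_{\eta_0}(V_1)=|\alpha_0|^{-1}[\eta_0,V_1]\subseteq[V_0,V_1]$ is a $d$-dimensional subspace (isometry), and the hypothesis $\dim[V_0,V_1]=d$ forces $\phi_{\eta_0}(V_1)=[V_0,V_1]$ for every such $\eta_0$. Taking orthogonal complements in $\g{g}_{\alpha_0+\alpha_1}$, the subspace $\phi_{\eta_0}(T_1)=\g{g}_{\alpha_0+\alpha_1}\ominus[V_0,V_1]$ is likewise independent of $\eta_0$, so spanning $[\eta_0,T_1]=|\alpha_0|\,\phi_{\eta_0}(T_1)$ over $\eta_0\in V_0$ collapses to $[V_0,T_1]=\g{g}_{\alpha_0+\alpha_1}\ominus[V_0,V_1]=\phi_{\xi_0}(T_1)$; together with $\phi_{\xi_0}(V_1)=[V_0,V_1]$ and the mirror-image argument interchanging the indices $0$ and $1$, this gives all the identities of (i).

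The remaining two assertions should then be short. For (ii), assuming $T_0\neq\{0\}$ I would choose a unit $t_0\in T_0$; then $\phi_{t_0}\rvert_{\g{g}_{\alpha_1}}$ is an isometry, so $\phi_{t_0}(V_1)$ is $d$-dimensional, while $\phi_{t_0}(V_1)\subseteq[T_0,V_1]=[V_1,T_0]$, which by (i) equals $\g{g}_{\alpha_0+\alpha_1}\ominus[V_0,V_1]$ and so has dimension $m-d$. Hence $d\le m-d$, i.e.\ $\dim V_0\le\dim T_0$, and symmetrically for $k=1$. For (iii), $\phi_{\xi_1}$ carries $T_0$ isometrically onto $\phi_{\xi_1}(T_0)=\phi_{\xi_0}(T_1)$ (the two are equal by (i)), and $\phi_{\theta\xi_0}$, being the inverse of $\phi_{\xi_0}\rvert_{\g{g}_{\alpha_1}}$, carries this subspace isometrically onto $T_1$; composing gives the isometry $(\phi_{\theta\xi_0}\circ\phi_{\xi_1})\rvert_{T_0}\colon T_0\to T_1$, and symmetrically for the other map. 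The claimed orthogonal decomposition $\g{s}\ominus(\g{a}\oplus V)=\g{s}\ominus\g{a}=T_0\oplus T_1\oplus\g{g}_{\alpha_0+\alpha_1}$ (the two agree since $V$ is orthogonal to $\g{s}$) is then the assembly of this with the orthogonal splitting $\g{g}_{\alpha_0+\alpha_1}=[V_0,V_1]\oplus[V_0,T_1]$ from (i), using that distinct restricted root spaces are mutually orthogonal.

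The genuine obstacle, and the only place the full hypothesis $\dim V_0=\dim V_1=\dim[V_0,V_1]$ is used, is the rigidity step in (i): without the dimension equality one retains only the inclusion $\phi_{\eta_0}(V_1)\subseteq[V_0,V_1]$ and cannot conclude that the images $[\eta_0,T_1]$ coincide for all $\eta_0$, so $[V_0,T_1]$ need not reduce to the single subspace $\phi_{\xi_0}(T_1)$. Once that rigidity is secured, (ii) is a one-line dimension count and (iii) is a composition of isometries. The only routine care needed is over normalisation: the image subspaces $\phi_{\xi_k}(\cdot)$ do not depend on rescaling $\xi_k$, so the identities in~(\ref{lemma:auxiliar:characterisation:decomposition}) and in (i) hold for arbitrary $0\neq\xi_k\in V_k$, whereas the isometry statements in (iii) are to be read for unit $\xi_k$.
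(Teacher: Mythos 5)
Your proposal is correct and follows essentially the same route as the paper: both rest on the isometries of Proposition~\ref{proposition:main1}, a dimension count to get $\phi_{\xi_0}(V_1)=[V_0,V_1]=\phi_{\xi_1}(V_0)$, orthogonal complements for the $T$-parts, and then short dimension/composition arguments for (ii) and (iii). The only differences are cosmetic: you spell out the rigidity step (that $\phi_{\eta_0}(V_1)=[V_0,V_1]$ for \emph{every} nonzero $\eta_0\in V_0$, which is what lets $[V_0,T_1]$ collapse to the single image $\phi_{\xi_0}(T_1)$) which the paper leaves implicit, and in (iii) you compose inverse isometries where the paper runs a one-line orthogonality computation.
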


\begin{proof}
	According to Proposition~\ref{proposition:main1}, the map $\phi_{\xi_0} \rvert_{\g{g}_{\alpha_1}} \colon \g{g}_{\alpha_1} \to \g{g}_{\alpha_0 + \alpha_1}$ and the map $\phi_{\xi_1} \rvert_{\g{g}_{\alpha_0}} \colon \g{g}_{\alpha_0} \to \g{g}_{\alpha_0 + \alpha_1}$ are linear isometries. Since $\g{g}_{\alpha_k} = V_k \oplus T_k$ is an orthogonal decomposition by 
construction, we get (\ref{lemma:auxiliar:characterisation:decomposition}).
	
	Assume from now on that $\dim(V_0) = \dim(V_1) = \dim([V_0, V_1])$. Since we have $\phi_{\xi_0} (V_1) \subseteq [V_0,V_1]$ and we have $\dim(\phi_{\xi_0} (V_1)) = \dim(V_1) = \dim([V_0, V_1])$, we get $\phi_{\xi_0} (V_1) = [V_0,V_1]$, and analogously, $\phi_{\xi_1} (V_0) = [V_0,V_1]$. From (\ref{lemma:auxiliar:characterisation:decomposition}) we then 
obtain the other part of (\ref{lemma:auxiliar:characterisation:i}). From (i) we get $\dim(T_0) = \dim([T_1,V_0])$. If $\dim(T_1) > 0$ we also get $\dim([T_1,V_0]) \geq \dim(V_0)$ from Proposition~\ref{proposition:main1}. Altogether this implies $\dim(T_0) \geq \dim(V_0)$ if $\dim(T_1) > 0$. Analogously, we get $\dim(T_1) \geq \dim(V_1)$ if $\dim(T_0) > 0$. Note that $\dim(T_0) = \dim(T_1)$. This proves (\ref{lemma:auxiliar:characterisation:iii}).
	Recall that $\phi_{\xi_0}(T_1)=\phi_{\xi_1}(T_0)$ is orthogonal to  $\phi_{\xi_0}(V_1) = \phi_{\xi_1}(V_0)$. For $X_0 \in T_0$ and $\eta_1 \in V_1$ we have
	\[
	\langle (\phi_{\theta \xi_0} \circ \phi_{\xi_1})(X_0), \eta_1 \rangle_{AN}  =  \langle \phi_{\xi_1}(X_0), \phi_{\xi_0}(\eta_1) \rangle_{AN} = 
0.
	\]
	Since $\dim(V_0) = \dim(V_1)$ and $\dim(T_0) = \dim(T_1)$, then $(\phi_{\theta \xi_0} \circ \phi_{\xi_1})\rvert_{T_0} \colon T_0 \to T_1$ is a linear isometry. This implies (\ref{lemma:auxiliar:characterisation:ii}). 
\end{proof}

The next result provides an algebraic characterization of the CPC property of the orbit $S \cdot o$.

\begin{proposition}\label{characterisation}
	Let $\g{s}$ be the subalgebra of $\g{a} \oplus \g{n}$ defined by 
	\[
	\g{s} = \g{a} \oplus (\g{g}_{\alpha_0} \ominus V_0) \oplus (\g{g}_{\alpha_1} \ominus V_1) \oplus \g{g}_{\alpha_0+\alpha_1}
	\]
	and $S$ be the connected closed subgroup of $AN$ with Lie algebra $\g{s}$. Then the orbit $S \cdot o$ is a CPC submanifold of the symmetric space 
$G/K = AN$ if and only if $\dim(V_0) = \dim(V_1) = \dim([V_0, V_1])$. 
	Moreover, if $S \cdot o$ is a CPC submanifold, then its principal curvatures are $\pm \frac{1}{\sqrt{2}}$, both with multiplicity $\dim(T_0)$, and $0$ with multiplicity $\dim(\g{g}_{\alpha_0 + \alpha_1}) + 2$. 
\end{proposition}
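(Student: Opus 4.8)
The plan is to use that, by \eqref{shape} and the $A_2$ bracket relations, the shape operator $\Ss_\xi$ interchanges the two summands of the orthogonal splitting $\g{s}=\g{a}\oplus(T_0\oplus T_1)\oplus\g{g}_{\alpha_0+\alpha_1}$: it annihilates $\g{a}$ (already shown), maps $T_0\oplus T_1$ into $\g{g}_{\alpha_0+\alpha_1}$, and maps $\g{g}_{\alpha_0+\alpha_1}$ into $T_0\oplus T_1$. Writing a unit normal as $\xi=\xi_0+\xi_1=c_0\hat\xi_0+c_1\hat\xi_1$ with $\hat\xi_k\in V_k$ of unit length and $c_0^2+c_1^2=1$, I would first record the explicit formulas $\Ss_\xi X=\tfrac{c_1}{\sqrt2}\phi_{\hat\xi_1}(X)$ for $X\in T_0$, $\Ss_\xi X=\tfrac{c_0}{\sqrt2}\phi_{\hat\xi_0}(X)$ for $X\in T_1$, and $\Ss_\xi Z=\tfrac{c_0}{\sqrt2}\pi_{T_1}\phi_{\theta\hat\xi_0}(Z)+\tfrac{c_1}{\sqrt2}\pi_{T_0}\phi_{\theta\hat\xi_1}(Z)$ for $Z\in\g{g}_{\alpha_0+\alpha_1}$, where $\pi_{T_k}$ is the orthogonal projection onto $T_k$. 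These follow from \eqref{shape}, Proposition~\ref{proposition:main1}, and the $A_2$ identities $|\alpha_k|=\sqrt2$, $A_{\alpha_0,\alpha_1}=-1$, $2\alpha_k\notin\Delta$, which also give the normalisation constants in \eqref{definition:phi}.

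For the sufficiency direction I assume $\dim V_0=\dim V_1=\dim[V_0,V_1]=:d$ and invoke Lemma~\ref{lemma:auxiliar:characterisation}, which yields the orthogonal decomposition $\g{g}_{\alpha_0+\alpha_1}=[V_0,V_1]\oplus[V_0,T_1]$ together with $[V_0,T_1]=\phi_{\hat\xi_0}(T_1)=\phi_{\hat\xi_1}(T_0)$. The formulas above show $\Ss_\xi(T_0\oplus T_1)\subseteq[V_0,T_1]$, so $\Ss_\xi$ annihilates $[V_0,V_1]$. The one genuine computation is to verify that $\Ss_\xi^2$ acts as $\tfrac12\,\id$ on $[V_0,T_1]$: for $Z\in[V_0,T_1]$ the equalities $[V_0,T_1]=\phi_{\hat\xi_0}(T_1)=\phi_{\hat\xi_1}(T_0)$ remove both projections, and using $\phi_{\theta\hat\xi_k}\circ\phi_{\hat\xi_k}=\id$ from Proposition~\ref{proposition:main1} a short calculation gives $\Ss_\xi^2 Z=\tfrac{c_0^2+c_1^2}{2}Z=\tfrac12 Z$. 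Hence $\Ss_\xi|_{T_0\oplus T_1}$ is onto $[V_0,T_1]$ of rank $\dim T_0$; since $\Ss_\xi$ is symmetric and interchanges the two summands, its spectrum is symmetric, so its eigenvalues are $\pm\tfrac1{\sqrt2}$, each of multiplicity $\dim T_0$, together with $0$ on $\g{a}\oplus[V_0,V_1]\oplus\ker(\Ss_\xi|_{T_0\oplus T_1})$, of dimension $2+\dim[V_0,V_1]+\dim T_0=\dim\g{g}_{\alpha_0+\alpha_1}+2$. None of these data depend on $\xi$, so $S\cdot o$ is CPC with the stated principal curvatures.

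For the necessity direction I do not assume the dimension conditions. Taking the special unit normals $\xi=\hat\xi_0\in V_0$ and $\xi=\hat\xi_1\in V_1$, the formulas and Proposition~\ref{proposition:main1} show that $\Ss_{\hat\xi_0}$ has eigenvalues $\pm\tfrac1{\sqrt2}$ of multiplicity $\dim T_1$ and $0$ otherwise, and symmetrically for $\hat\xi_1$; the CPC hypothesis then forces $\dim T_0=\dim T_1$, hence $\dim V_0=\dim V_1=:d$ since $\dim\g{g}_{\alpha_0}=\dim\g{g}_{\alpha_1}$. To obtain $\dim[V_0,V_1]=d$ I would write $\Ss_\xi$ as the operator with off-diagonal block $L\colon T_0\oplus T_1\to\g{g}_{\alpha_0+\alpha_1}$ and compute $\tr\Ss_\xi^2=2\|L\|_{\mathrm{HS}}^2$ and $\tr\Ss_\xi^4=2\|L^\ast L\|_{\mathrm{HS}}^2$ as functions of $(c_0,c_1)$. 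Using that $\phi_{\hat\xi_0},\phi_{\hat\xi_1}$ are isometries one finds $\tr\Ss_\xi^4=\tfrac12\dim T_0+c_0^2c_1^2\bigl(\|C\|_{\mathrm{HS}}^2-\dim T_0\bigr)$, where $C=\pi_{T_1}\circ\phi_{\theta\hat\xi_0}\circ\phi_{\hat\xi_1}|_{T_0}\colon T_0\to T_1$. As CPC makes $\tr\Ss_\xi^4$ independent of $\xi$, this forces $\|C\|_{\mathrm{HS}}^2=\dim T_0$, which by the isometry property of $\phi_{\theta\hat\xi_0}\circ\phi_{\hat\xi_1}$ is equivalent to $\phi_{\theta\hat\xi_0}\phi_{\hat\xi_1}(T_0)\subseteq T_1$, i.e.\ to $[\hat\xi_1,V_0]=[\hat\xi_0,V_1]$ for all unit $\hat\xi_k\in V_k$; since each of these spaces has dimension $d$ and lies in $[V_0,V_1]$, this yields $\dim[V_0,V_1]=d$.

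The main obstacle is the two explicit spectral computations: proving $\Ss_\xi^2=\tfrac12\id$ on $[V_0,T_1]$ in the sufficiency part, and extracting the precise dependence of $\tr\Ss_\xi^4$ on $(c_0,c_1)$ in the necessity part. Both hinge on careful bookkeeping of the normalisation constants from \eqref{definition:phi}, of the projections $\pi_{T_k}$, and of the inverse-isometry relations $\phi_{\theta\hat\xi_k}\circ\phi_{\hat\xi_k}=\id$ of Proposition~\ref{proposition:main1}; the translation of the analytic identity $\|C\|_{\mathrm{HS}}^2=\dim T_0$ into the geometric condition $\dim[V_0,V_1]=d$ is the delicate endpoint of the argument.
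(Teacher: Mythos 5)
Your proposal is correct, and while its skeleton (the splitting $\g{s}=\g{a}\oplus T_0\oplus T_1\oplus\g{g}_{\alpha_0+\alpha_1}$, the shape operator formulas from (\ref{shape}) and Proposition~\ref{proposition:main1}, and Lemma~\ref{lemma:auxiliar:characterisation} for sufficiency) coincides with the paper's, the necessity step is genuinely different. The paper works entirely with kernel dimensions: it compares $\dim\ker(\Ss_{\xi_0})$ and $\dim\ker(\Ss_{\xi_1})$ to get $\dim(V_0)=\dim(V_1)$, and then computes $\ker(\Ss_\xi)$ for the mixed normal $\xi=\tfrac{1}{\sqrt2}(\xi_0+\xi_1)$ explicitly, finding that it involves the intersections $\phi_{\xi_0}(T_1)\cap\phi_{\xi_1}(T_0)$ and $\phi_{\xi_0}(V_1)\cap\phi_{\xi_1}(V_0)$; matching kernel dimensions then forces these intersections to be full, i.e.\ $\phi_{\xi_0}(T_1)=\phi_{\xi_1}(T_0)$ and $\phi_{\xi_0}(V_1)=\phi_{\xi_1}(V_0)$, whence $\dim([V_0,V_1])=\dim(V_0)$. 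You instead exploit a higher spectral moment: after the same first step, you let the normal vary in the full circle $\xi=c_0\hat\xi_0+c_1\hat\xi_1$ and show $\tr\Ss_\xi^4=\tfrac12\dim(T_0)+c_0^2c_1^2(\|C\|_{\mathrm{HS}}^2-\dim(T_0))$ with $C=\pi_{T_1}\circ\phi_{\theta\hat\xi_0}\circ\phi_{\hat\xi_1}|_{T_0}$; CPC kills the $c_0^2c_1^2$-coefficient, and since $\phi_{\theta\hat\xi_0}\circ\phi_{\hat\xi_1}|_{T_0}$ is an isometry, $\|C\|_{\mathrm{HS}}^2=\dim(T_0)$ is equivalent to $\phi_{\hat\xi_1}(T_0)\subseteq\phi_{\hat\xi_0}(T_1)$, hence (comparing dimensions and orthogonal complements in $\g{g}_{\alpha_0+\alpha_1}$) to the same space equalities the paper obtains; your computation of the block matrix of $L^*L$ and the resulting trace formula check out. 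The paper's argument is more elementary — pure dimension counting using only the $0$-eigenvalue — while yours isolates the obstruction to CPC as a single scalar and avoids identifying $\ker(\Ss_\xi)$ for mixed normals; the price is the Hilbert--Schmidt bookkeeping. Your sufficiency direction is the paper's argument in operator-theoretic clothing: the paper exhibits explicit $3$-dimensional $\Ss_\xi$-invariant subspaces spanned by $X,\phi_{\xi_1}(X),\phi_{\theta\xi_0}(\phi_{\xi_1}(X))$ with a tridiagonal matrix, whereas you verify $LL^*=\tfrac12\id$ on $[V_0,T_1]$ and use the symmetry of the spectrum of an anti-diagonal symmetric block operator; both give $\pm\tfrac1{\sqrt2}$ with multiplicity $\dim(T_0)$ and $0$ with multiplicity $\dim(\g{g}_{\alpha_0+\alpha_1})+2$. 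One small point worth adding for completeness: when $T_0=T_1=\{0\}$ your trace identity is vacuous, but the conclusion $\dim([V_0,V_1])=\dim(V_0)$ then follows directly from the surjectivity of $\phi_{\hat\xi_1}\colon\g{g}_{\alpha_0}\to\g{g}_{\alpha_0+\alpha_1}$ in Proposition~\ref{proposition:main1}, so no gap results.
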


\begin{proof}
	Assume that the orbit $S \cdot o$ is a CPC submanifold. Let $j,k \in \{0,1\}$ with $j \neq k$ and $\xi_j \in V_j$ be a unit vector. According to (\ref{lemma:auxiliar:characterisation:decomposition}), the tangent space $\g{s}$ of $S \cdot o$ at $o$ has the orthogonal decomposition
	\[
	\g{s} = \g{a} \oplus T_j \oplus T_k \oplus \phi_{\xi_j}(T_k) \oplus \phi_{\xi_j}(V_k).
	\]
	We saw at the beginning of this section that $\Ss_{\xi_j}\rvert_{\g{a}} = 0$. Using Lemma \ref{lemma:a,k0}(\ref{sh:itself}) and Proposition~\ref{proposition:main1}, we get following expression for the shape operator $\Ss_{\xi_j}$:
	\[
	\sqrt{2} \Ss_{\xi_j} X = \phi_{\xi_j}(X_{T_k}) + \phi_{\theta \xi_j}(X_{\phi_{\xi_j}(T_k)}) ,
	\]
	where $X \in \g{s}$ is a tangent vector and the index to $X$ denotes the 
orthogonal projection of $X$ onto that space. In particular, $\dim(\ker(\Ss_{\xi_j})) = 2 + \dim(T_j) + \dim(V_k)$. As $S \cdot o$ is a CPC submanifold, we have $\dim(\ker(\Ss_{\xi_j}))  = \dim(\ker(\Ss_{\xi_k}))$ and thus we get $\dim(T_j) + \dim(V_k) = \dim(T_k) + \dim(V_j)$. On the other hand, we have 
	\[
	\dim(T_j) + \dim(V_j) = \dim(\g{g}_{\alpha_j}) = \dim(\g{g}_{\alpha_k}) = \dim(T_k) + \dim(V_k).
	\] 
	From the previous two equations we easily get $\dim(V_j) = \dim(V_k)$, 
that is, $\dim(V_0) = \dim(V_1)$ (and then also $\dim(T_0) = \dim(T_1)$).
	
	We now investigate the shape operator $\Ss_\xi$ with respect to the unit 
normal vector $\xi = \frac{1}{\sqrt{2}}(\xi_0 + \xi_1)$. Since $\Ss_\xi 
= \frac{1}{\sqrt{2}}(\Ss_{\xi_0} + \Ss_{\xi_1})$, we get
	\[
	2\Ss_{\xi} X = \phi_{\xi_0}(X_{T_1}) + \phi_{\xi_1}(X_{T_0}) +  \phi_{\theta \xi_0}(X_{\phi_{\xi_0}(T_1)}) + \phi_{\theta \xi_1}(X_{\phi_{\xi_1}(T_0)}) .
	\]
	Since all the $\phi$-maps are linear isometries on the corresponding spaces (see Lemma \ref{lemma:phi:injective}), we obtain  
	\begin{align*}
	\ker(\Ss_\xi) ={}& \g{a} \oplus \{X \in T_0 \oplus T_1 : \phi_{\xi_0}(X_{T_1}) + \phi_{\xi_1}(X_{T_0}) = 0\}\\
	& \oplus \{X \in \g{g}_{\alpha_0 + \alpha_1} : X_{\phi_{\xi_0}(T_1)} = 
0 = X_{\phi_{\xi_1}(T_0)}\} \\
	= {} & \g{a} \oplus \{ \phi_{\theta\xi_0}Y - \phi_{\theta\xi_1}Y \in T_0 \oplus T_1 : Y \in \phi_{\xi_0}(T_1) \cap \phi_{\xi_1}(T_0) \} \\
	& \oplus (\phi_{\xi_0}(V_1) \cap \phi_{\xi_1}(V_0)).
	\end{align*}
	Since $S \cdot o$ is a CPC submanifold, then $\dim(\ker(\Ss_{\xi_j})) = 
\dim(\ker(\Ss_{\xi}))$ and therefore
	\[
	\dim(T_j) + \dim(V_k) = \dim(\phi_{\xi_0}(T_1) \cap \phi_{\xi_1}(T_0)) 
+ \dim(\phi_{\xi_0}(V_1) \cap \phi_{\xi_1}(V_0)).
	\]
	Again, since all the $\phi$-maps are linear isometries on the corresponding spaces, this is possible only when $\phi_{\xi_0}(T_1) = \phi_{\xi_1}(T_0)$ and $\phi_{\xi_0}(V_1) = \phi_{\xi_1}(V_0)$. As $\xi_0 \in V_0$ 
and $\xi_1 \in V_1$ are arbitrary unit vectors, this implies in particular that $\dim([V_0,V_1]) = \dim(V_0) = \dim(V_1)$. 	
	
	Conversely, assume that $\dim([V_0, V_1]) = \dim(V_0) = \dim(V_1)$. Let $\xi$ be a unit normal vector of $S \cdot o$ at $o$. There exist unit 
vectors $\xi_0 \in V_0$, $\xi_1 \in V_1$ and $\varphi \in [0,\frac{\pi}{2}]$ so that $\xi = \cos(\varphi) \xi_0 + \sin(\varphi) \xi_1$. From Lemma~\ref{lemma:auxiliar:characterisation} we have the orthogonal decomposition
	\begin{equation}\label{characterization:decomposition}
	\begin{aligned}
	\g{s} = {}& \g{a} \oplus T_0 \oplus (\phi_{\theta \xi_0} \circ \phi_{\xi_1})(T_0) \oplus \phi_{\xi_1}(T_0) \oplus [V_0, V_1] \\
	=  {}& \g{a} \oplus T_0 \oplus T_1 \oplus [V_1,T_0] \oplus [V_0, V_1] 
	\end{aligned}
	\end{equation}
	of the tangent space $\g{s}$ of $S \cdot o$ at $o$. As shown above, we have
	\[
	\sqrt{2} \Ss_{\xi_j} X = \phi_{\xi_j}(X_{T_k}) + \phi_{\theta \xi_j}(X_{\phi_{\xi_j}(T_k)}) .
	\]
	This implies
	\begin{align*}
	\sqrt{2} \Ss_{\xi} X = & \cos(\varphi)(\phi_{\xi_0}(X_{T_1}) + \phi_{\theta \xi_0}(X_{\phi_{\xi_0}(T_1)}) ) \\& + \sin(\varphi)(\phi_{\xi_1}(X_{T_0}) + \phi_{\theta \xi_1}(X_{\phi_{\xi_1}(T_0)}) ).
	\end{align*}
	We immediately see that $\Ss_\xi$ vanishes on $\g{a} \oplus [V_0,V_1]$. Next, consider the vectors $0 \neq X \in T_0$, $\phi_{\xi_1}(X) \in [V_1,T_0] = [V_0,T_1]$ and $\phi_{\theta \xi_0}(\phi_{\xi_1}(X)) \in T_1$. The $3$-dimensional subspace of $\g{s}$ spanned by $X,\phi_{\xi_1}(X),\phi_{\theta \xi_0}(\phi_{\xi_1}(X))$ is $\Ss_\xi$-invariant and the matrix representation of $\Ss_\xi$ with respect to the basis $X$, $\phi_{\xi_1}(X)$, $\phi_{\theta \xi_0}(\phi_{\xi_1}(X))$ is 
	\[
	\frac{1}{\sqrt{2}} 
	\left(\begin{array}{ccc} 0& \sin(\varphi) & 0 \\   \sin(\varphi) & 0 & \cos(\varphi)  \\  0& \cos(\varphi) & 0  
	\end{array}\right).
	\]
	The eigenvalues of this matrix are $0$ and $\pm \frac{1}{\sqrt{2}}$. It follows that $S \cdot o$ is a CPC submanifold of $AN$. The statement about the principal curvatures and their multiplicities also follows from this calculation. 
\end{proof}

The previous result implies that the codimension of a CPC submanifold is even. However, as we will see in the next result, there are further constraints on the codimension.

\begin{corollary}\label{codimension}
	Let $\g{s}$ be the subalgebra of $\g{a} \oplus \g{n}$ defined by 
	\[
	\g{s} = \g{a} \oplus (\g{g}_{\alpha_0} \ominus V_0) \oplus (\g{g}_{\alpha_1} \ominus V_1) \oplus \g{g}_{\alpha_0+\alpha_1}
	\]
	and $S$ be the connected closed subgroup of $AN$ with Lie algebra $\g{s}$. Assume that $S \cdot o$ is a CPC submanifold of $G/K = AN$. 
	\begin{itemize}
		\item[(i)] If $G/K = SL_{3}(\mathbb{R})/SO_{3}$, then $S \cdot o$ has 
codimension $2$.
		\item[(ii)] If $G/K = SL_{3}(\mathbb{C})/SU_{3}$, then $S \cdot o$ has codimension $2$ or $4$.
		\item[(iii)] If $G/K = SL_{3}(\mathbb{H})/Sp_{3}$, then $S \cdot o$ has codimension $2$, $4$ or $8$.
		\item[(iv)] If $G/K = E^{-26}_6/F_4$, then $S \cdot o$ has codimension $2$, $4$, $8$ or $16$.
	\end{itemize}
\end{corollary}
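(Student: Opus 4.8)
The plan is to reduce everything to the algebraic characterization already obtained in Proposition~\ref{characterisation} and then to recognize the resulting constraint as an instance of Hurwitz's classical theorem on the composition of quadratic forms. By Proposition~\ref{characterisation}, the orbit $S \cdot o$ is a CPC submanifold precisely when $\dim(V_0) = \dim(V_1) = \dim([V_0,V_1]) =: m$; in that case the codimension of $S \cdot o$ equals $\dim(V) = \dim(V_0) + \dim(V_1) = 2m$. Hence the whole statement amounts to determining which values of $m$ are possible, subject to the obvious bound $m \leq \dim(\g{g}_{\alpha_0})$, where $\dim(\g{g}_{\alpha_0}) = \dim(\g{g}_{\alpha_1}) = \dim(\g{g}_{\alpha_0+\alpha_1})$ equals $1,2,4,8$ for $SL_3(\R)/SO_3$, $SL_3(\C)/SU_3$, $SL_3(\HH)/Sp_3$ and $E^{-26}_6/F_4$, respectively.

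First I would record that the restriction of the Lie bracket $[\,\cdot\,,\,\cdot\,]\colon \g{g}_{\alpha_0}\times\g{g}_{\alpha_1}\to\g{g}_{\alpha_0+\alpha_1}$ is a norm-multiplicative bilinear map. Indeed, since $A_{\alpha_1,\alpha_0}=-1$ and $|\alpha_1|=\sqrt{2}$, the map $\phi_\xi=\frac{1}{\sqrt{2}}\ad(\xi)$ of Proposition~\ref{proposition:main1} is, for every unit vector $\xi\in\g{g}_{\alpha_1}$, a linear isometry of $\g{g}_{\alpha_0}$ onto $\g{g}_{\alpha_0+\alpha_1}$ (Lemma~\ref{lemma:phi:injective}); bilinearity then gives $\lvert[X,Y]\rvert_{AN}=\sqrt{2}\,\lvert X\rvert_{AN}\,\lvert Y\rvert_{AN}$ for all $X\in\g{g}_{\alpha_0}$ and $Y\in\g{g}_{\alpha_1}$.

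Now assume $S\cdot o$ is a CPC submanifold, so that $\dim(V_0)=\dim(V_1)=\dim([V_0,V_1])=m$. Restricting the bracket to $V_0\times V_1$ and rescaling by $1/\sqrt{2}$, I obtain a bilinear map $\mu\colon V_0\times V_1\to[V_0,V_1]$ with $\lvert\mu(X,Y)\rvert_{AN}=\lvert X\rvert_{AN}\,\lvert Y\rvert_{AN}$; it takes values in $[V_0,V_1]$ by the very definition of the latter as the span of all such brackets. Choosing orthonormal bases of the three $m$-dimensional Euclidean spaces $V_0$, $V_1$ and $[V_0,V_1]$ identifies $\mu$ with a bilinear map $\R^m\times\R^m\to\R^m$ satisfying the norm identity $\lvert\mu(x,y)\rvert^2=\lvert x\rvert^2\lvert y\rvert^2$. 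This is exactly a solution of the Hurwitz composition problem in dimension $m$, so Hurwitz's theorem forces $m\in\{1,2,4,8\}$.

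Intersecting $\{1,2,4,8\}$ with $\{1,\dots,\dim(\g{g}_{\alpha_0})\}$ then yields the admissible values of $m$, and hence the stated codimensions $2m$: only $\{2\}$ for $SL_3(\R)/SO_3$, $\{2,4\}$ for $SL_3(\C)/SU_3$, $\{2,4,8\}$ for $SL_3(\HH)/Sp_3$, and $\{2,4,8,16\}$ for $E^{-26}_6/F_4$. The crux of the argument, and the only genuinely non-formal step, is the passage to Hurwitz's theorem; everything else is bookkeeping resting on Proposition~\ref{characterisation}. An alternative to invoking Hurwitz would be to show directly that $V_1\cdot b_0^{-1}$, for a fixed unit $b_0\in V_1$ and under the identification of the bracket with multiplication in the relevant division algebra $\R,\C,\HH,\OO$, is a unital division subalgebra, and then to appeal to the classification of such subalgebras; this works smoothly in the associative cases but requires extra care with the non-associativity of $\OO$, which is precisely why the uniform norm-multiplicative route is preferable. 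Finally, I would remark that each listed codimension is actually realized, as exhibited by the constructions in parts (I) and (II) of the Main Theorem.
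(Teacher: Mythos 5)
Your proposal is correct, but it follows a genuinely different route from the paper. The paper's own proof is entirely self-contained and case-by-case: cases (i) and (ii) are immediate from Proposition~\ref{characterisation}; the codimensions $10$, $12$, $14$ in case (iv) are ruled out by Proposition~\ref{characterisation} together with Lemma~\ref{lemma:auxiliar:characterisation}(\ref{lemma:auxiliar:characterisation:iii}) (if $T_k \neq \{0\}$ then $\dim(T_k) \geq \dim(V_k)$, forcing $2m \leq \dim(\g{g}_{\alpha_0})$ unless $V$ is everything); and the remaining possibility, codimension $6$, is killed by an explicit computation: starting from an orthonormal basis $\eta_1,\eta_2,\eta_3$ of $V_1$ and a unit $\xi_1 \in V_0$, one builds $\xi_2 = (\phi_{\theta\eta_2}\circ\phi_{\eta_3})(\xi_1) \in V_0$ orthogonal to $\xi_1$ satisfying $\phi_{\xi_2}(\eta_2)=\phi_{\xi_1}(\eta_3)$ and $\phi_{\xi_2}(\eta_3)=-\phi_{\xi_1}(\eta_2)$, and the requirement $\phi_{\xi_1}(V_1)=\phi_{\xi_2}(V_1)$ then forces $\phi_{\eta_1}(\xi_1)=\pm\phi_{\eta_1}(\xi_2)$, contradicting the injectivity of $\phi_{\eta_1}$ from Proposition~\ref{proposition:main1}. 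You replace both exclusion steps by a single structural observation: since $\phi_\xi$ is an isometry (Lemma~\ref{lemma:phi:injective}), the bracket induces a norm-multiplicative bilinear map $V_0 \times V_1 \to [V_0,V_1]$ between three $m$-dimensional Euclidean spaces, so Hurwitz's theorem on composition of quadratic forms gives $m \in \{1,2,4,8\}$ at once. Your reduction is sound (the constant $\sqrt{2}$, the containment of the image in $[V_0,V_1]$, and the identification codimension $=2m$ are all verified correctly), and what it buys is uniformity and conceptual clarity: it explains why the admissible codimensions are twice the dimensions of the normed real division algebras, anticipating the geometric discussion in Section~\ref{geometric explanations}. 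What it costs is self-containedness: the paper's argument uses only lemmas already proved in the text, and its codimension-$6$ computation is in effect a hands-on proof of precisely the instance of Hurwitz's theorem you invoke (non-existence of a $3$-dimensional composition algebra). Your closing remark about realizability of each codimension is not needed for the corollary, which is purely a necessity statement, and the aside about division subalgebras is dispensable since your main route does not use it.
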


\begin{proof}
	According to Proposition~\ref{characterisation}, for (i) and (ii) there is nothing to prove since the dimensions of the root spaces are $1$ and $2$ respectively. In the cases (iii) and (iv) the dimensions of the root spaces are $4$ and $8$ respectively, and therefore we need to exclude the possibility for codimension $6$ in case (iii) and for codimensions $6$, $10$, $12$ and $14$ in case (iv). The codimensions $10$, $12$ and $14$ in case (iv) cannot occur by Proposition~\ref{characterisation} and Lemma~\ref{lemma:auxiliar:characterisation}(\ref{lemma:auxiliar:characterisation:iii}).
	It remains to investigate the possibility for codimension $6$ in cases (iii) and (iv). In this situation we have $\dim(V_0) = \dim(V_1) = \dim([V_0,V_1]) = 3$.
	
	Let $\eta_1, \eta_2, \eta_3$ be an orthonormal basis of $V_1$ and $\xi_1$ be a unit vector in $V_0$. The vector $\xi_2 = (\phi_{\theta \eta_2} \circ \phi_{\eta_3})(\xi_1)$ is non-zero by means of Proposition~\ref{proposition:main1}. On the one hand, using again Proposition~\ref{proposition:main1}, we obtain 
	\begin{align*}
	\langle \xi_1, \xi_2 \rangle_{AN} & = \langle \xi_1, (\phi_{\theta \eta_2} \circ \phi_{\eta_3})(\xi_1) \rangle_{AN}= \langle \phi_{\eta_2}(\xi_1), \phi_{\eta_3}(\xi_1) \rangle_{AN}\\
	&=\langle \phi_{\xi_1}(\eta_2), \phi_{\xi_1}(\eta_3) \rangle_{AN}=\langle \eta_2, \eta_3 \rangle_{AN} = 0.
	\end{align*}
	On the other hand, we have $\phi_{\eta_2}(\xi_2) = (\phi_{\eta_2} \circ \phi_{\theta \eta_2} \circ \phi_{\eta_3})(\xi_1) = \phi_{\eta_3}(\xi_1)$. From Lemma~\ref{lemma:a,k0}(\ref{ak0}) we have $[\eta_3, \theta \eta_2] \in \g{k}_0 \subseteq \g{k}$. Since $\theta \rvert_{\g{k}} =~\id_{\g{k}}$ we have $[\eta_3, \theta \eta_2] = [\theta \eta_3, \eta_2]$. Using this and the Jacobi identity we get
	\[
	\phi_{\eta_3}(\xi_2) =  (\phi_{\eta_3} \circ \phi_{\theta \eta_2} \circ \phi_{\eta_3})(\xi_1) = - (\phi_{\eta_2} \circ \phi_{\theta \eta_3} \circ \phi_{\eta_3})(\xi_1) = -\phi_{\eta_2}(\xi_1).
	\]
	To sum up, having in mind definition (\ref{definition:phi}), we have shown that $\phi_{\xi_2}(\eta_2) =\phi_{\xi_1}(\eta_3)$ and $\phi_{\xi_2}(\eta_3) =  -\phi_{\xi_1}(\eta_2)$. Since $\phi_{\xi_1}(V_1)$ and $\phi_{\xi_2}(V_1)$ must be the same vector space by Proposition~\ref{characterisation} and Lemma~\ref{lemma:auxiliar:characterisation}(\ref{lemma:auxiliar:characterisation:i}), we conclude that $\phi_{\xi_1}(\eta_1)$ is either $\phi_{\xi_2}(\eta_1)$ or $-\phi_{\xi_2}(\eta_1)$, which implies that $\phi_{\eta_1}(\xi_1)$ is either $\phi_{\eta_1}(\xi_2)$ or $-\phi_{\eta_1}(\xi_2)$. Since $\langle \xi_1, \xi_2 \rangle_{AN} = 0$, this contradicts the injectivity of $\phi_{\eta_1}$ (see Proposition~\ref{proposition:main1}). This concludes the proof. 
\end{proof}

We want to derive a more geometric characterization of the CPC property. For this, we first prove an auxiliary result.

\begin{lemma}\label{k0}
	Let $X, Y \in \g{g}_{\alpha_0+\alpha_1}$ be orthonormal (and consequently $G/K \neq SL_{3}(\mathbb{R})/SO_{3}$). Then:
	\begin{enumerate} [{\rm (i)}]
		\item The linear map $\frac{1}{4} \ad([\theta X, Y])$ defines a complex 
structure on the vector space ${\mathbb R}X \oplus {\mathbb R}Y$ spanned by $X$ and $Y$. \label{k0:i}
		\item The linear map $\frac{1}{2}\ad([\theta X, Y])$ defines complex structures on the vector spaces $\g{g}_{\alpha_0}$ and $\g{g}_{\alpha_1}$. \label{k0:ii}
		\item Let $X, Y,Z \in \g{g}_{\alpha_0+\alpha_1}$ be orthonormal (and thus $G/K$ is $SL_{3}(\mathbb{H})/Sp_{3}$ or $E^{-26}_6/F_4$).  Define $J_1 
= \frac{1}{2} \ad([\theta X, Y])$, $J_2 =\frac{1}{2}\ad([\theta X, Z])$ and $J_3 = J_1 \circ J_2$. Then  $\{J_1, J_2, J_3\}$ defines quaternionic structures on the vector spaces $\g{g}_{\alpha_0}$ and $\g{g}_{\alpha_1}$. \label{k0:iii} \label{quaternionic:structure}
	\end{enumerate}
\end{lemma}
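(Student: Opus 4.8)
The plan is to set $\mu = \alpha_0+\alpha_1$ and $T=[\theta X, Y]$, and to reduce everything to the behaviour of the single endomorphism $\ad(T)$ on the root spaces. First I would record the structural facts I will use repeatedly. Since $X,Y\in\g{g}_\mu$ are orthonormal, Lemma~\ref{lemma:a,k0}(\ref{ak0}) gives $T=[\theta X,Y]\in\g{k}_0\subseteq\g{k}$, so $\theta T=T$ and, by (\ref{cartan:inner}), $\ad(T)$ is skew-symmetric with respect to $\langle\cdot,\cdot\rangle_{B_\theta}$; moreover $\ad(T)$ preserves every root space because $T\in\g{g}_0$. I will also use that $[X,Y]=0$ (as $2\mu\notin\Delta$), that $[\theta X,X]=2H_\mu$ by Lemma~\ref{lemma:a,k0}(\ref{X:X}) with $\langle X,X\rangle_{AN}=1$, and the $A_2$ evaluations $\mu(H_\mu)=|\mu|^2=2$ and $\alpha_0(H_\mu)=\alpha_1(H_\mu)=\langle\alpha_0,\alpha_1\rangle+2=1$.

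For (\ref{k0:i}) I would compute the action of $\ad(T)$ on $\spann\{X,Y\}$ directly. Using the Jacobi identity together with $[Y,X]=0$ and $[\theta X,X]=2H_\mu$ one gets $\ad(T)X=[[\theta X,Y],X]=-[Y,[\theta X,X]]=2[H_\mu,Y]=4Y$. To identify $\ad(T)Y$ I would split it in $\g{g}_\mu$: its $Y$-component vanishes by skew-symmetry, its $X$-component is $\langle[T,Y],X\rangle_{B_\theta}/\langle X,X\rangle_{B_\theta}=-\langle Y,[T,X]\rangle_{B_\theta}/2=-\langle Y,4Y\rangle_{B_\theta}/2=-4$, and for any $Z\in\g{g}_\mu$ orthogonal to $X,Y$ one has $[T,Z]=-[Y,[\theta X,Z]]=\ad\!\big([\theta X,Z]\big)Y$ with $[\theta X,Z]\in\g{k}$, so $\langle[T,Y],Z\rangle_{B_\theta}=-\langle Y,\ad([\theta X,Z])Y\rangle_{B_\theta}=0$ again by skew-symmetry. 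Hence $\ad(T)Y=-4X$, and $(\tfrac14\ad(T))^2=-\id$ on $\R X\oplus\R Y$.

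For (\ref{k0:ii}) I would show $\ad(T)^2=-4\,\id$ on $\g{g}_{\alpha_0}$ (the case $\g{g}_{\alpha_1}$ being symmetric). Because $\ad(T)$ is skew-symmetric and preserves $\g{g}_{\alpha_0}$, the operator $\ad(T)^2$ is symmetric with $\langle\ad(T)^2W,W\rangle_{B_\theta}=-|\ad(T)W|_{B_\theta}^2$; since a symmetric operator is determined by its quadratic form, it suffices to prove $|\ad(T)W|_{B_\theta}^2=4|W|_{B_\theta}^2$ for all $W\in\g{g}_{\alpha_0}$. Here I factor $\ad(T)|_{\g{g}_{\alpha_0}}=-\ad(Y)\circ\ad(\theta X)$ through $\g{g}_{-\alpha_1}$, and compute the norm scaling of each factor via (\ref{cartan:inner}) and the Jacobi identity: for $\ad(\theta X)\colon\g{g}_{\alpha_0}\to\g{g}_{-\alpha_1}$ one finds $[X,[\theta X,W]]=[[X,\theta X],W]=-2[H_\mu,W]=-2W$ (the term $[\theta X,[X,W]]$ dies since $2\alpha_0+\alpha_1\notin\Delta$), giving $|[\theta X,W]|_{B_\theta}^2=2|W|_{B_\theta}^2$; likewise $\ad(Y)\colon\g{g}_{-\alpha_1}\to\g{g}_{\alpha_0}$ doubles squared norm using $[\theta Y,Y]=2H_\mu$ and $(-\alpha_1)(H_\mu)=-1$. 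Multiplying the two factors of $2$ yields the desired $4$.

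Finally, part (\ref{k0:iii}) follows cheaply from (\ref{k0:ii}). For $X,Y,Z\in\g{g}_\mu$ orthonormal and any $\varphi$, the vector $W_\varphi=\cos\varphi\,Y+\sin\varphi\,Z$ is a unit vector orthogonal to $X$, so (\ref{k0:ii}) applies to $\tfrac12\ad([\theta X,W_\varphi])$ and gives $\ad([\theta X,W_\varphi])^2=-4\,\id$ on $\g{g}_{\alpha_k}$. Expanding $[\theta X,W_\varphi]=\cos\varphi\,[\theta X,Y]+\sin\varphi\,[\theta X,Z]$ and comparing with $\ad([\theta X,Y])^2=\ad([\theta X,Z])^2=-4\,\id$ forces $\cos\varphi\sin\varphi\,\{\ad([\theta X,Y]),\ad([\theta X,Z])\}=0$, whence $J_1J_2=-J_2J_1$; together with $J_1^2=J_2^2=-\id$ and $J_3=J_1J_2$ this gives all quaternionic relations. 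I expect the only real work to be the norm-doubling computation in (\ref{k0:ii}); once that identity is established, both the exact off-diagonal vanishing in (\ref{k0:i}) and the anticommutation trick in (\ref{k0:iii}) are short, so that step is the main obstacle.
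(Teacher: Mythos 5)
Your proposal is correct; all three parts go through, but your route differs genuinely from the paper's in (ii) and (iii). In (i) both proofs start from the same Jacobi computation $[[\theta X,Y],X]=4Y$; the paper then obtains $[[\theta X,Y],Y]=-4X$ in one line from the identity $[\theta X,Y]=[X,\theta Y]$ (valid since $[\theta X,Y]\in\g{k}$ and $\theta\rvert_{\g{k}}=\id$), while you recover it by decomposing $\ad(T)Y$ inside $\g{g}_{\alpha_0+\alpha_1}$ and invoking skew-symmetry of $\ad(T)$ and of $\ad([\theta X,Z])$ --- longer, but it correctly rules out components along $\g{g}_{\alpha_0+\alpha_1}\ominus(\R X\oplus\R Y)$, which can be nonzero-dimensional here. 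For (ii) the paper performs a direct iterated Jacobi computation showing $[[\theta X,Y],[[\theta X,Y],W]]=-4W$; you instead factor $\ad([\theta X,Y])\rvert_{\g{g}_{\alpha_0}}=-\ad(Y)\circ\ad(\theta X)$ through $\g{g}_{-\alpha_1}$ (using $2\alpha_0+\alpha_1\notin\Delta$), show each factor doubles squared $B_\theta$-norms, and conclude because the symmetric operator $\ad(T)^2$ is determined by its quadratic form. For (iii) the paper does a second computation of the same type, $[[\theta X,Y],[[\theta X,Z],W]]=2[[\theta Y,Z],W]$, to extract the anticommutation $J_1J_2=-J_2J_1$; you get it with no further bracket work by polarization: applying (ii) to the unit vectors $W_\varphi=\cos\varphi\,Y+\sin\varphi\,Z$ and expanding $\ad([\theta X,W_\varphi])^2=-4\,\id$ bilinearly forces $J_1J_2+J_2J_1=0$. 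Both routes are sound. What the paper's approach buys is that it stays entirely inside the root-space bracket calculus and yields explicit identities of the kind reused right afterwards (in the remark generalizing the lemma, equation (\ref{quaternionic:k0}), and in the proof of Theorem \ref{characterisation2}); what yours buys is brevity and a conceptual reading --- it exhibits $W\mapsto\tfrac12\ad([\theta X,W])$ as a Clifford-type map on $\g{g}_{\alpha_0+\alpha_1}\ominus\R X$ acting on $\g{g}_{\alpha_k}$, so the quaternionic relations come from bilinearity rather than computation, and the same mechanism would transfer to more general root-space settings.
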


\begin{proof}
	(\ref{k0:i}): First, from the Jacobi identity, $2 (\alpha_0 + \alpha_1) \notin \Delta$ and Lemma~\ref{lemma:a,k0}(\ref{X:X}), we obtain
	\begin{equation}\label{complex:structure}
	[[\theta X, Y], X] = -[[X, \theta X], Y] = [[\theta X, X], Y] = 2 |\alpha_0 + \alpha_1|^2 Y = 4 Y.
	\end{equation}
	According to Lemma~\ref{lemma:a,k0}(\ref{ak0}) we have $[\theta X, Y] \in \g{k}_0 \subseteq \g{k}$. Since $\theta \rvert_{\g{k}} =~\id_{\g{k}}$ 
we have $[\theta X, Y] = [X, \theta Y]$. Together with (\ref{complex:structure}), we deduce that  $[[\theta X, Y], Y] = [[X, \theta Y], Y] = 
- [[\theta Y, X], Y] = -4X$. Thus we have $(\frac{1}{4}\ad([\theta X, Y]))^2 = -\mbox{id}$ on ${\mathbb R}X \oplus {\mathbb R}Y$.  
	
	(\ref{k0:ii}): Let $W \in \g{g}_{\alpha_k}$ for $k \in \{0,1\}$. Using the Jacobi identity, the equations (\ref{bracket:relation}),  (\ref{complex:structure}) and $[\theta X, Y] = [X, \theta Y]$, and Lemma \ref{lemma:a,k0}(\ref{X:X}), we obtain
	\begin{align*}
	[[\theta X, Y],[[\theta X, Y], W]] 
	&= - [[[[\theta X, Y], W], \theta X], Y] \\
	& = [[[W, \theta X],[\theta X, Y]], Y] + [[[\theta X,[\theta X, Y]], W], Y] \\
	& = [[[W, \theta X],[X, \theta Y]], Y] - [[\theta[[\theta X, Y], X], W], Y] \\
	& = -[[\theta Y, [[W, \theta X], X]], Y] - 4 [[\theta Y, W], Y]\\
	& = [[\theta Y, [[\theta X, X], W]], Y] - 4 [[\theta Y, Y], W] \\
	& = 2 [[\theta Y, [H_{\alpha_0+\alpha_1}, W]], Y] - 8 [H_{\alpha_0+\alpha_1}, W] 
	\\ &= 2 [[\theta Y, W], Y] - 8 W  = 2 [[\theta Y, Y], W] - 8 W 
	\\ &	= 4 [H_{\alpha_0+\alpha_1}, W] - 8 W  = 4 W - 8 W = -4 W. 
	\end{align*}

	(\ref{k0:iii}): With analogous arguments as above, we obtain
	\begin{align*}
	[[\theta X, Y],[[\theta X, Z], W]] & =  - [[[[\theta X, Z], W], \theta 
X], Y] \\
	& = [[[W, \theta X],[\theta X, Z]], Y] + [[[\theta X,[\theta X, Z]], W], Y] \\
	& = [[[W, \theta X],[X, \theta Z]], Y] - [[\theta[[\theta X, Z], X], W], Y] \\
	& = -[[\theta Z, [[W, \theta X], X]], Y] - 4 [[\theta Z, W], Y] \\
	& = [[\theta Z, [[\theta X, X], W]], Y] - 4 [[\theta Z, Y], W] \\
	& = 2 [[\theta Z, [H_{\alpha_0+\alpha_1}, W]], Y] + 4 [[\theta Y, Z], W] \\
	& = 2 [[\theta Z, W], Y] + 4 [[\theta Y, Z], W] \\
	& = -2 [[\theta Y, Z], W] + 4 [[\theta Y, Z], W] 
	=  2 [[\theta Y, Z], W].
	\end{align*}
	Using the previous equality and $[\theta Y, Z] = [Y, \theta Z]$, we deduce
	\begin{align*}
	 [[\theta X, Y],[[\theta X, Z], W]] & =  2 [[\theta Y, Z], W]  = -2 [[\theta Z, Y], W] \\ &
	= -[[\theta X, Z],[[\theta X, Y],W]].
	\end{align*}
	Now define $J_1 = \frac{1}{2} \ad([\theta X, Y])$ and $J_2 = \frac{1}{2}\ad([\theta X, Z])$. We just proved $(J_1 \circ J_2)\rvert_{\g{g}_{\alpha_k}} = -(J_2 \circ J_1) \rvert_{\g{g}_{\alpha_k}}$. Hence, using (\ref{k0:ii}) and defining  $J_3 = J_1 \circ J_2$, the result follows.
\end{proof}

\begin{remark}
	\rm We state here a generalization of Lemma \ref{k0} to arbitrary symmetric spaces of non-compact type. 
	Assume that $\lambda \in \Delta^{+}$ with $2 \lambda \notin \Delta^+$. Then every $2$-dimensional subspace ${\mathbb R}X \oplus {\mathbb R}Y$ of $\g{g}_{\lambda}$, with $X,Y \in \g{g}_{\lambda}$ orthonormal, can be viewed as a complex vector space with complex structure $\frac{1}{2 |\lambda|^2} \ad([\theta X, Y])$. Furthermore, each $4$-dimensional subspace of $\g{g}_{\lambda}$ can be described as a quaternionic subspace. Choose $X, Y, Z \in \g{g}_\lambda$ orthonormal. First, using $\theta \rvert_{\g{k}} = \id_{\g{k}}$ and the Jacobi identity, we deduce
	\begin{align}\label{quaternionic:k0}
	[[\theta X, Y], Z] & =   [[X, \theta Y], Z] = - [[\theta Y, Z], X]  = - [[Y, \theta Z], X] \nonumber \\ &
	= [[\theta Z, X], Y]  =  [[Z, \theta X], Y]  = -[[\theta X, Y], Z],
	\end{align}
	which implies $[[\theta X, Y], Z] = 0$. Let $W$ be a $4$-dimensional subspace of $\g{g}_{\lambda}$ and $X,Y,Z,T \in W$ be orthonormal. Then $J_1, J_2, J_3$ with 
	\begin{align*}
	J_1 & = \frac{1}{2 |\lambda|^2} (\ad([\theta X, Y] +\ad([\theta Z, T])), \\
	J_2 & = \frac{1}{2 |\lambda|^2} (\ad([\theta X, Z] -\ad([\theta Y, T])), \\
	J_3 & = J_1 \circ J_2
	\end{align*}
	is a quaternionic structure on $W$.
\end{remark}

If we think about our symmetric spaces of type $A_2$ in terms of matrices, we have canonical real, complex, quaternionic or octonionic structures on the root spaces. More precisely, the Iwasawa decomposition $G = KAN$ 
gives
\[
G/K = AN =
\left\{
\begin{pmatrix}
x_{11} & x_{12} & x_{13} \\
0 & x_{22} & x_{23} \\
0 & 0 & x_{33}
\end{pmatrix}: 
\begin{array}{l}
x_{11},x_{22},x_{33} \in {\mathbb R};\\ 
x_{12},x_{13},x_{23} \in {\mathbb F};\\ 
x_{11}x_{22}x_{33} = 1
\end{array}\right\}
\]
with 
\[
{\mathbb F} = 
\begin{cases}
{\mathbb R} & \text{ if } G/K = SL_3({\mathbb R})/SO_3, \\
{\mathbb C} & \text{ if } G/K = SL_3({\mathbb C})/SU_3 ,\\
{\mathbb H} & \text{ if } G/K = SL_3({\mathbb H})/Sp_3 ,\\
{\mathbb O} & \text{ if } G/K = E^{-26}_6/F_4. 
\end{cases}
\]
The $x_{12}$- and $x_{23}$-entries correspond (on Lie algebra level) to the root spaces $\g{g}_{\alpha_0}$ and $\g{g}_{\alpha_1}$ respectively, and the $x_{13}$-entry corresponds to the root space $\g{g}_{\alpha_0+\alpha_1}$. The standard examples of CPC submanifolds in these symmetric spaces are given by
\[
\left\{
\begin{pmatrix}
x_{11} & x_{12} & x_{13} \\
0 & x_{22} & x_{23} \\
0 & 0 & x_{33}
\end{pmatrix}
:
\begin{array}{l}
x_{11},x_{22},x_{33} \in {\mathbb R};\\
x_{12},x_{23} \in {\mathbb F} \ominus {\mathbb F}_0; \\
x_{13} \in {\mathbb F};  x_{11}x_{22}x_{33} = 1
\end{array}
\right\}
\]
with ${\mathbb F}_0 \in \{{\mathbb R},{\mathbb C},{\mathbb H},{\mathbb O}\}$ and ${\mathbb F}_0 \subseteq {\mathbb F}$.
If ${\mathbb F}_0 = {\mathbb F}$, we get the totally geodesic submanifolds 
\begin{align*}
{\mathbb R}H^2 \times {\mathbb R} & \subset SL_3({\mathbb R})/SO_3,\\
{\mathbb R}H^3 \times {\mathbb R} & \subset SL_3({\mathbb C})/SU_3, \\
{\mathbb R}H^5 \times {\mathbb R} & \subset SL_3({\mathbb H})/Sp_3, \\
{\mathbb R}H^9 \times {\mathbb R} & \subset E^{-26}_6/F_4.
\end{align*}
In all other cases the submanifold is not totally geodesic. The following 
result makes this more precise.

\begin{theorem}\label{characterisation2}
	Let $\g{s}$ be the subalgebra of $\g{a} \oplus \g{n}$ defined by 
	\[
	\g{s} = \g{a} \oplus (\g{g}_{\alpha_0} \ominus V_0) \oplus (\g{g}_{\alpha_1} \ominus V_1) \oplus \g{g}_{\alpha_0+\alpha_1},
	\]
	$V_0,V_1 \neq \{0\}$, and $S$ be the connected closed subgroup of $AN$ with Lie algebra $\g{s}$. Then $S \cdot o$ is a CPC submanifold if and only if one of the following statements holds:
	\begin{enumerate} [{\rm (i)}]
		\item \label{itm:total} $V_0 \oplus V_1 = \g{g}_{\alpha_0} \oplus \g{g}_{\alpha_1}$;  or
		\item \label{itm:proper}$V_0 \oplus V_1$ is a proper subset of $\g{g}_{\alpha_0} \oplus \g{g}_{\alpha_1}$ and
		\begin{enumerate} [{\rm (a)}] 
			\item \label{itm:real} $V_0$ and $V_1$ are isomorphic to $\mathbb{R}$; 
 or
			\item \label{itm:complex} $V_0$ and $V_1$ are isomorphic to $\mathbb{C}$ and there exists $T \in \g{k}_0$ such that $\ad(T)$ defines complex structures on $V_0$ and $V_1$ and vanishes on $[V_0, V_1]$; or 
			\item  \label{itm:quaternionic} $V_0$ and $V_1$ are isomorphic to $\mathbb{H}$ and there exists a subset $\g{l} \subseteq \g{k}_0$ such that $\ad(\g{l})$ defines quaternionic structures on $V_0$ and $V_1$ and vanishes on $[V_0, V_1]$. 
		\end{enumerate}
	\end{enumerate}
\end{theorem}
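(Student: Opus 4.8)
The plan is to read the statement off the numerical criterion already in hand. By Proposition~\ref{characterisation}, $S\cdot o$ is a CPC submanifold if and only if $\dim(V_0)=\dim(V_1)=\dim([V_0,V_1])$, so the entire task is to translate this one identity into the list (i)--(ii). First I would dispose of case (i): if $V_0\oplus V_1=\g{g}_{\alpha_0}\oplus\g{g}_{\alpha_1}$ then $[V_0,V_1]=[\g{g}_{\alpha_0},\g{g}_{\alpha_1}]=\g{g}_{\alpha_0+\alpha_1}$ (the bracket is onto by Proposition~\ref{proposition:main1}), and since $\dim(\g{g}_{\alpha_0})=\dim(\g{g}_{\alpha_1})=\dim(\g{g}_{\alpha_0+\alpha_1})$ by Lemma~\ref{root:spaces:dimension}, the criterion holds automatically; thus (i) always yields a CPC submanifold and, conversely, a CPC submanifold with $V_0\oplus V_1$ not proper is of type (i). From now on assume $V_0\oplus V_1$ proper and set $d=\dim(V_0)=\dim(V_1)$ (equal by Proposition~\ref{characterisation}). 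Corollary~\ref{codimension} together with $d<\dim(\g{g}_{\alpha_k})$ forces $d\in\{1,2,4\}$, which is exactly the trichotomy $V_0,V_1\cong\mathbb{R},\mathbb{C},\mathbb{H}$ of (a),(b),(c). For $d=1$ there is nothing to add: $[V_0,V_1]=\mathbb{R}[\xi_0,\eta_1]$ is one-dimensional because $\phi_{\xi_0}$ is injective (Proposition~\ref{proposition:main1}), so the criterion holds and (a) is equivalent to the CPC property. The substance of the theorem is therefore the equivalence, for fixed $d\in\{2,4\}$, between $\dim([V_0,V_1])=d$ and the existence of the structure $T$ (resp.\ $\g{l}$) in (b) (resp.\ (c)).

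I would prove the easy implication --- existence of $T$ or $\g{l}$ forces $\dim([V_0,V_1])=d$ --- uniformly, using only that $\ad(T)$ is a derivation. Suppose $T\in\g{k}_0$ with $J:=\ad(T)$ a complex structure on $V_0$ and on $V_1$ and $J|_{[V_0,V_1]}=0$. For $\xi_0\in V_0$, $\eta_1\in V_1$ the bracket $[\xi_0,\eta_1]$ lies in $[V_0,V_1]$, so applying $\ad(T)$ gives
\[
0=[J\xi_0,\eta_1]+[\xi_0,J\eta_1].
\]
Fixing units $\xi_0\in V_0$, $\eta_1\in V_1$, the pairs $\{\xi_0,J\xi_0\}$ and $\{\eta_1,J\eta_1\}$ are orthonormal bases of $V_0$, $V_1$ ($J$ is skew-adjoint for $\langle\cdot,\cdot\rangle_{AN}$ since $T\in\g{k}_0$, by (\ref{cartan:inner})). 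The displayed relation, and the same relation applied to $[\xi_0,J\eta_1]$, give $[J\xi_0,\eta_1]=-[\xi_0,J\eta_1]$ and $[J\xi_0,J\eta_1]=[\xi_0,\eta_1]$, so $[V_0,V_1]=\spann\{[\xi_0,\eta_1],[\xi_0,J\eta_1]\}$ is two-dimensional, and exactly $d=2$ by injectivity of $\phi_{\xi_0}$. In the quaternionic case one runs the same computation with the three generators $J_1,J_2,J_3$ of $\ad(\g{l})$; here the quaternion relations $J_aJ_b=\pm J_c$ are used to reduce the mixed brackets $[J_a\xi_0,J_b\eta_1]$, and one finds $[V_0,V_1]=\spann\{[\xi_0,\eta_1],[\xi_0,J_b\eta_1]:b=1,2,3\}$, of dimension $4=d$. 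This direction is short.

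The hard direction is the converse: assuming $\dim([V_0,V_1])=d$, construct $T\in\g{k}_0$ (resp.\ the triple spanning $\g{l}$). My plan is to exploit the division-algebra picture of the matrix model introduced just before the theorem, under which $\g{g}_{\alpha_0},\g{g}_{\alpha_1},\g{g}_{\alpha_0+\alpha_1}$ are identified with $\mathbb{F}\in\{\mathbb{H},\mathbb{O}\}$ and the bracket $[\,\cdot\,,\cdot\,]\colon\g{g}_{\alpha_0}\times\g{g}_{\alpha_1}\to\g{g}_{\alpha_0+\alpha_1}$ with multiplication in $\mathbb{F}$ (write $L_a$ for left multiplication by $a$). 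The hypothesis $\dim(V_0V_1)=\dim(V_0)=\dim(V_1)=d$ forces $L_{\xi_0}(V_1)=V_0V_1$ for every unit $\xi_0\in V_0$ (both sides are $d$-dimensional and one contains the other), whence $L_{\bar\xi_0'}L_{\xi_0}(V_1)=V_1$ for all units $\xi_0,\xi_0'\in V_0$; using associativity (alternativity and Moufang for $\mathbb{O}$) this reads $L_w(V_1)=V_1$ for every $w$ in the $d$-dimensional subspace $\bar\xi_0'V_0\ni 1$. That subspace carries a $(d-1)$-dimensional space of imaginary units, each acting on $V_1$ as a complex structure, pairwise anticommuting when $d=4$; the symmetric argument gives the matching structures on $V_0$. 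It then remains to realise these left multiplications simultaneously on $V_0$ and $V_1$, with the \emph{same} normalisation, by a single $T\in\g{k}_0$ that moreover \emph{annihilates} $[V_0,V_1]$. For this I would try $T=[\theta\xi_0,\xi_0']+[\theta\eta_1,\eta_1']$ for orthonormal bases of $V_0$ and $V_1$ chosen compatibly, landing in $\g{k}_0$ by Lemma~\ref{lemma:a,k0}(\ref{ak0}) and identifying the induced complex structures through the Remark following Lemma~\ref{k0}.

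The main obstacle is precisely this last construction. The two naive candidates both fail: $T=[\theta X,Y]$ built from an orthonormal basis $X,Y$ of $[V_0,V_1]$ does give complex structures on the root spaces (Lemma~\ref{k0}(\ref{k0:ii})), but by Lemma~\ref{k0}(\ref{k0:i}) it acts as a nonzero complex structure on $[V_0,V_1]$ instead of vanishing there; while $T=[\theta\xi_0,\xi_0']$ built from $V_0$ alone acts on $V_0$ and on $V_1$ with different normalisations (a direct computation using Lemma~\ref{lemma:a,k0}(\ref{X:X}) and Lemma~\ref{lemma:ad} gives $\ad(T)^2=-16\,\id$ on $V_0$ but $-4\,\id$ on $V_1$), so it is not a complex structure of the required kind on both. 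Reconciling the simultaneous complex- (or quaternionic-) structure condition on $V_0$ and $V_1$ with the vanishing on $[V_0,V_1]$ --- and, for $d=4$, additionally checking that the three resulting operators anticommute and span a subspace $\g{l}\subseteq\g{k}_0$ --- is the crux, and the point where the explicit root-space and division-algebra bookkeeping cannot be avoided.
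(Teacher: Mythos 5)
Your reduction to Proposition~\ref{characterisation}, your handling of cases (i) and (ii)(a), and your proof that the existence of $T$ (resp.\ $\g{l}$) forces $\dim([V_0,V_1])=\dim(V_0)=\dim(V_1)$ are correct; that last argument (the derivation identities $[JX_0,X_1]=-[X_0,JX_1]$ and $[JX_0,JX_1]=[X_0,X_1]$, together with injectivity of $\phi_{\xi_0}$, and their quaternionic analogues) is exactly the paper's proof of the ``if'' direction in cases (b) and (c).

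The genuine gap is the one you flag yourself: in the ``only if'' direction for (b) and (c) you never construct $T$ (resp.\ $\g{l}$), and your division-algebra sketch stops precisely at that construction. The idea you are missing is where the paper builds the structure from: neither from $[V_0,V_1]$ nor from $V_0$ (your two candidates, which indeed fail for the reasons you give), but from the orthogonal complement $[V_0,T_1]=[V_1,T_0]$ of $[V_0,V_1]$ inside $\g{g}_{\alpha_0+\alpha_1}$, supplied by Lemma~\ref{lemma:auxiliar:characterisation}. Since $\dim([V_0,T_1])=\dim(T_0)\geq\dim(V_0)\geq 2$ in these cases, one may choose orthonormal $X,Y\in[V_0,T_1]$ (and also $Z$ in the quaternionic case) and take $T=[\theta X,Y]$. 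This resolves both of your obstructions simultaneously: by Lemma~\ref{k0}(\ref{k0:ii}),(\ref{k0:iii}) the operator $\frac{1}{2}\ad([\theta X,Y])$ is a complex structure on all of $\g{g}_{\alpha_0}$ and all of $\g{g}_{\alpha_1}$ with the \emph{same} normalisation (this is what defeated your candidate $[\theta\xi_0,\xi_0']$), and by identity (\ref{quaternionic:k0}) the operator $\ad([\theta X,Y])$ annihilates every vector of $\g{g}_{\alpha_0+\alpha_1}$ orthogonal to both $X$ and $Y$, hence all of $[V_0,V_1]$, because $[V_0,V_1]\perp[V_0,T_1]$ (this is what defeated your candidate built from a basis of $[V_0,V_1]$). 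The remaining point, which the paper settles with a short Jacobi-identity computation showing $\langle[[\theta X,Y],\xi_k],Z_k\rangle_{B_{\theta}}=0$ for $\xi_k\in V_k$ and $Z_k\in T_k$, is that these operators preserve $V_0$ and $V_1$, so they restrict to the required complex (resp.\ quaternionic) structures there. Without some such construction, your argument establishes only the easy half of the equivalence in cases (b) and (c).
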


\begin{proof}
	Assume that $S \cdot o$ is a CPC submanifold. From Proposition~\ref{characterisation} we have $\dim(V_0) = \dim(V_1) = \dim([V_0,V_1])$. Recall that $T_k = \g{g}_{\alpha_k} \ominus V_k$, with $k \in \{0, 1\}$, and hence $\dim(T_0) = \dim(T_1)$. If $\dim(T_0) \leq 1$, we have (\ref{itm:total}) or (\ref{itm:real}). Assume that $\dim(T_0) \geq 2$. From Lemma~\ref{lemma:auxiliar:characterisation} we get $[V_0, T_1] = [V_1, T_0]$ and $\dim([V_0, T_1]) = \dim(T_0) \geq 2$. Note that $[V_0, T_1] \subseteq \g{g}_{\alpha_0+\alpha_1}$. Thus, using elements in $[V_0, T_1]$, we can construct complex structures (following Lemma \ref{k0}(\ref{k0:ii}) 
if $\dim(T_0) =2$) or quaternionic structures (following Lemma \ref{k0}(\ref{k0:iii}) if $\dim(T_0) > 2$) on $\g{g}_{\alpha_0}$ and $\g{g}_{\alpha_1}$. From (\ref{quaternionic:k0}) we deduce that these structures vanish on $[V_0, V_1]$. Thus it remains to check that these structures can be 
restricted to $V_0$ and $V_1$. In other words, we need to verify that $\langle [[\theta X, Y], \xi_k], Z_k \rangle_{AN} =0$ for $X,Y \in [V_0, T_1] = [V_1, T_0]$, $\xi_k \in V_k$ and $Z_k \in T_k$. Let $j \in \{0,1\}$ with $j \neq k$. There exist $L_j \in T_j$ and $\eta_j \in V_j$ so that $X = \phi_{\xi_k}(L_j)$ and $Y = \phi_{Z_k}(\eta_j)$. Then, using the Jacobi identity, the fact that $\langle \cdot, \cdot \rangle_{B_{\theta}}$ is $\theta$-invariant, (\ref{cartan:inner}) and Proposition~\ref{proposition:main1}, we obtain
	\begin{align*}
	\langle [[\theta X, Y], \xi_k], Z_k \rangle_{B_{\theta}} &  =  - \langle [[\xi_k, \theta X], Y], Z_k \rangle_{B_{\theta}}  =  \langle [\xi_k, 
\theta X], [Z_k, \theta Y] \rangle_{B_{\theta}} \\
	&
	= \langle [\theta \xi_k, X], [\theta Z_k, Y] \rangle_{B_{\theta}} \\ & 
= 2 \langle \phi_{\theta \xi_k} \circ \phi_{\xi_k}(L_j), \phi_{\theta Z_k} \circ  \phi_{Z_k}(\eta_j) \rangle_{B_{\theta}}
	\\ &= 2 \langle L_j, \eta_j \rangle_{B_{\theta}} = 0,
	\end{align*}
	which implies that (\ref{itm:complex}) or (\ref{itm:quaternionic}) holds.

	Conversely, if (\ref{itm:total}) or (\ref{itm:real}) holds, then $S \cdot o$ is a CPC submanifold by Proposition~\ref{characterisation}. For case 
(\ref{itm:complex}), put $J = \ad(K)$ with $K \in \g{k}_0$. By assumption, we can write $V_k = {\mathbb R}X_k \oplus {\mathbb R}JX_k$ with $0 \neq X_k \in V_k$. Then $[V_0,V_1]$ is spanned by $[X_0,X_1]$, $[JX_0,JX_1]$, $[X_0,JX_1]$, $[JX_0,X_1]$. Since $J = \ad(K)$ is a derivation and 
vanishes on $[V_0,V_1]$, we have
	\begin{align*}
	0 & = J[X_0,X_1] = [JX_0,X_1] + [X_0,JX_1],\\
	0 & = J^2[X_0,X_1] = [J^2X_0,X_1] + 2[JX_0,JX_1] + [X_0,J^2X_1]\\
	& = 2([JX_0,JX_1] - [X_0,X_1]),
	\end{align*} 
	which implies $\dim([V_0,V_1]) = 2$. Thus $S \cdot o$ is a CPC submanifold by Proposition~\ref{characterisation}. In case (\ref{itm:quaternionic}) we can write $J_\nu = \ad(K_\nu)$, $K_{\nu} \in \g{k}_0$, $\nu = 1,2,3$,  for the quaternionic structure. Then $V_k$ is spanned by $X_k,J_1X_k,J_2X_k,J_3X_k$ with $0 \neq X_k \in V_k$. As above, we get $[J_\nu X_0,X_1] = - [X_0,J_\nu X_1]$ and 
	$[J_\nu X_0,J_\nu X_1] = [X_0,X_1]$.
	For $\nu,\mu \in \{1,2,3\}$ with $\nu \neq \mu$ we have $J_\nu J_\mu = 
\pm J_\rho$ and hence $[J_\nu X_0,J_\mu X_1] = [J_\nu^2 X_0,J_\nu J_\mu 
X_1] = \pm [X_0,J_\rho X_1]$.
	Altogether this implies that $\dim([V_0,V_1]) = 4$ and from Proposition~\ref{characterisation} we conclude that $S \cdot o$ is a CPC submanifold.
\end{proof}

This finishes the proof of the Main Theorem for the four symmetric spaces 
$SL_{3}(\mathbb{R})/SO_{3}$, $SL_{3}(\mathbb{C})/SU_{3}$, $SL_{3}(\mathbb{H})/Sp_{3}$ and  $E^{-26}_6/F_4$. Recall that this is equivalent to characterize the CPC property of the shape operator $\Ss_{\xi}$ of the examples we constructed in a general symmetric space $G/K$, when it is restricted to the $\Ss_{\xi}$-invariant subspace $\bigoplus_{\gamma \in [\alpha_0]} \g{g}_{\gamma}^{\top}$.

As we mentioned at the beginning of this section, all the examples of the 
Main Theorem can be described as canonical extensions of CPC submanifolds 
in the above four symmetric spaces. As was shown in \cite{miguel}, several geometric properties of submanifolds are preserved via canonical extensions. However, the CPC property is not preserved in general by canonical extension. For example, the maximal flat $A \cdot o \cong {\mathbb E}^2$ is a totally geodesic submanifold of $SL_{3}(\mathbb{R})/SO_{3}$. However, its canonical extension to the symmetric space $SL_{4}(\mathbb{R})/SO_{4}$ is not even austere. For this reason we need to analyze more thoroughly the shape operator of the examples described in the Main Theorem.

\section{Canonical extensions of CPC submanifolds}\label{canonical:extension}

In this section we calculate the shape operator of the canonical extensions of the examples that we investigated in the previous section. We will conclude that these canonical extensions are also CPC submanifolds.

The concept of canonical extensions was introduced in \cite{BT13} and studied in the context of cohomogeneity one actions. We refer the reader to \cite{BT13} for details, but roughly it works as follows. Every subset $\Phi$ of $\Pi$ determines a parabolic subgroup $Q_\Phi$ of $G$. Let $Q_\Phi = M_\Phi A_\Phi N_\Phi$ be its Langlands decomposition. The orbit $B_\Phi = M_\Phi \cdot o$ is a totally geodesic submanifold of $M$ whose rank is equal to the cardinality of $\Phi$. If $S$ is a subgroup of $M_\Phi$, then $SA_\Phi N_\Phi$ is the canonical extension of $S$ from $M_\Phi$ 
to $G$ and the orbit $SA_\Phi N_\Phi \cdot o \subseteq M$ is the canonical extension of the orbit $S \cdot o \subseteq B_\Phi$. If there exist $\alpha_0,\alpha_1 \in \Pi$ so that $\alpha_0$ and $\alpha_1$ are connected in the Dynkin diagram of $M = G/K$ by a single edge, and put $\Phi = \{\alpha_0, \alpha_1\}$, then $B_\Phi$ is one of the symmetric spaces $SL_{3}(\mathbb{R})/SO_{3}$, $SL_{3}(\mathbb{C})/SU_{3}$, $SL_{3}(\mathbb{H})/Sp_{3}$ or $E_{6}^{-26}/F_4$. In Theorem~\ref{characterisation2} we classified the CPC  submanifolds of $B_{\Phi}$ of the form $S \cdot o$, where $\g{s} = \g{a} \oplus ((\g{g}_{\alpha_0} \oplus \g{g}_{\alpha_1}) \ominus V) \oplus \g{g}_{\alpha_0 + \alpha_1}$. In this section we will prove that the canonical extension of $S \cdot o \subset B_{\Phi}$ to the symmetric space $M = G/K$ is a CPC submanifold if and only if $S \cdot o$ is a CPC submanifold of $B_\Phi$.

Let $G/K$ be a symmetric space of non-compact type, with at least two simple roots $\alpha_0$ and $\alpha_1$ connected by a single edge in its Dynkin diagram. Our approach for constructing new examples was to take a subspace $V \subset \g{g}_{\alpha_0} \oplus \g{g}_{\alpha_1}$ and define the 
subalgebra $\g{s} = \g{a} \oplus (\g{n} \ominus V)$. Let $S$ be the connected closed subgroup of $AN$ with Lie algebra $\g{s}$. We are interested in the geometry of the submanifold $S \cdot o$ of $AN = G/K$.

Let $\xi \in V$ be a unit normal vector. As we clarified in Section \ref{construction:examples}, the subspaces $\bigoplus_{\gamma \in [\lambda]} \g{g}_{\gamma}^{\top} $ in the orthogonal decomposition 
\begin{equation*}
\g{n} \ominus V  = \bigoplus_{\lambda \in \Delta^{+}/ \sim} \left(\bigoplus_{\gamma \in [\lambda]} \g{g}_{\gamma}^{\top} \right)
\end{equation*}
are all $\Ss_{\xi}$-invariant. Therefore, $S \cdot o$ is a CPC submanifold of $M$ if and only if for all unit normal vectors $\xi$ the shape operator $\Ss_{\xi}$ has the same eigenvalues when restricted to each of these 
subspaces. We clarified this in Theorem~\ref{characterisation2} for the invariant subspace $\bigoplus_{\gamma \in [\alpha_0]} \g{g}_{\gamma}^\top$. In this section we will clarify this for the remaining subspaces in the above decomposition.
The following result explains the above decomposition in more detail.

\begin{lemma}\label{structure:strings}
	Let $\Delta$ be the root system of a symmetric space of non-compact type 
with at least two simple roots ${\alpha_0}$ and ${\alpha_1}$ connected by 
a single edge in the Dynkin diagram. Then the equivalence class $[\lambda]$ of a positive root  $\lambda \in \Delta^{+} \backslash (\mathbb{R}{\alpha_0} \oplus \mathbb{R}{\alpha_1})$, which has minimum level in its $(\alpha_0,\alpha_1)$-string, can be described as follows (with $k \in \{0,1\}$ and indices modulo $2$): 
	\begin{enumerate}[{\rm (i)}]
		\item $[\lambda] = \{\lambda\}$, if $\langle \lambda, \alpha_0 \rangle = 0 = \langle \lambda, \alpha_1 \rangle$.
		\item $[\lambda] =  \{\lambda, \lambda + {\alpha_k}, \lambda + {\alpha_{k}} + {\alpha_{k+1}}\}$, if $|\alpha_k| \geq |\lambda|$ and $\langle \lambda, \alpha_k \rangle \neq 0$.\label{structure:strings:ii}
\item $[\lambda] =  \{\lambda +\varepsilon_k {\alpha_k}  + \varepsilon_{k+1} {\alpha_{k+1}} \, : \,   \varepsilon_k, \varepsilon_{k+1} \in \{0,1,2\}, \, \varepsilon_k \geq \varepsilon_{k+1}  \}$, if $|\alpha_k| < |\lambda|$ and $\langle \lambda, {\alpha_k} \rangle \neq 0$ \label{structure:strings:iii}.	
\end{enumerate}
\end{lemma}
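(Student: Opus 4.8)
The plan is to classify the $(\alpha_0,\alpha_1)$-string $[\lambda]$ of a positive root $\lambda$ of minimum level by exploiting the rank-$2$ nature of the problem. Since $\alpha_0,\alpha_1$ are connected by a single edge, they have equal length and $\langle \alpha_0,\alpha_1\rangle < 0$ with $A_{\alpha_0,\alpha_1} = A_{\alpha_1,\alpha_0} = -1$. The root $\lambda$ lies outside $\mathbb{R}\alpha_0\oplus\mathbb{R}\alpha_1$, so every element of $[\lambda]$ is a genuine root, and by the minimum-level hypothesis we have $\lambda - \alpha_0 \notin \Delta_0$ and $\lambda - \alpha_1 \notin \Delta_0$. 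The whole string is then determined by how $\lambda$ pairs with $\alpha_0$ and $\alpha_1$, so the first step is to split into the three cases according to the signs of $\langle\lambda,\alpha_0\rangle$ and $\langle\lambda,\alpha_1\rangle$, using Proposition~\ref{p2.48}.

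For case (i), if $\langle\lambda,\alpha_0\rangle = \langle\lambda,\alpha_1\rangle = 0$, then neither $\lambda\pm\alpha_0$ nor $\lambda\pm\alpha_1$ can be a root: indeed by Proposition~\ref{p2.48}(\ref{pr:string}) the $\alpha_k$-string through $\lambda$ has $p-q = A_{\alpha_k,\lambda} = 0$, and since $\lambda-\alpha_k\notin\Delta_0$ forces $p=0$, we get $q=0$. Hence $[\lambda] = \{\lambda\}$. For cases (ii) and (iii), the minimum-level assumption together with $\lambda - \alpha_k \notin \Delta_0$ means $\langle\lambda,\alpha_k\rangle \le 0$ whenever $\lambda+\alpha_k$ can fail to exist; I would observe that $\lambda$ being of minimum level rules out $\langle\lambda,\alpha_k\rangle > 0$ for the relevant $k$, so $\langle\lambda,\alpha_k\rangle < 0$ and $\lambda + \alpha_k\in\Delta$. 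The key computation is then to read off the $\alpha_k$-string and $\alpha_{k+1}$-strings through the successive roots via the Cartan integers, using Proposition~\ref{p2.48}(\ref{pr:short}) (which forces $A_{\alpha_k,\mu}\in\{0,\pm1\}$ when $|\alpha_k|\le|\mu|$) and the fact that each string has at most four roots.

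The division between (ii) and (iii) is governed by the length comparison $|\alpha_k|$ versus $|\lambda|$, which I expect to be the crux of the argument. When $|\alpha_k|\ge|\lambda|$, the integer $A_{\alpha_k,\lambda}$ is constrained to $\{0,-1\}$ by Proposition~\ref{p2.48}(\ref{pr:short}); with $\langle\lambda,\alpha_k\rangle<0$ this gives $A_{\alpha_k,\lambda} = -1$, so $\lambda+\alpha_k\in\Delta$ but $\lambda+2\alpha_k\notin\Delta$, and then I must check that $\lambda+\alpha_k$ pairs with $\alpha_{k+1}$ so as to produce exactly $\lambda+\alpha_k+\alpha_{k+1}$ and nothing further, yielding the three-element string (ii). When $|\alpha_k|<|\lambda|$ the integer $A_{\alpha_k,\lambda}$ can be $-2$, producing the longer string with $\lambda+2\alpha_k$, and then the arithmetic must be pushed one step further on each branch to recover the six roots listed in (iii); here I would invoke Lemma~\ref{root:spaces:dimension} and the geometry of the admissible rank-$2$ subsystems ($A_2$, $B_2$, $BC_2$, $G_2$) to guarantee that no additional roots appear and that the string closes up exactly as stated.

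The main obstacle will be verifying completeness at each stage: once I know the roots that must be present, I have to argue that no further roots of the form $\lambda + n\alpha_0 + m\alpha_1$ lie in $\Delta$. This is where the length hypothesis and the at-most-four-roots bound on individual strings do the real work, and I expect the cleanest route is to restrict attention to the rank-$2$ root subsystem generated by $\{\alpha_0,\alpha_1,\lambda\}$ (which is one of the finitely many rank-$2$ systems) and simply enumerate, rather than argue abstractly. Throughout, the equal-length single-edge condition on $\alpha_0,\alpha_1$ keeps the two simple roots symmetric, which justifies the indexing ``$k\in\{0,1\}$, indices modulo $2$'' and lets the two branches be treated by the same computation with the roles of $\alpha_0$ and $\alpha_1$ interchanged.
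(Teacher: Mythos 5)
Your overall plan---trace the strings with Cartan integers, then settle completeness by passing to the root subsystem generated by $\{\alpha_0,\alpha_1,\lambda\}$ and enumerating---is close in spirit to the paper's proof, but the step you lean on for completeness is wrong as stated, and it is precisely the step where all the work lies. Since $\lambda \notin \mathbb{R}\alpha_0 \oplus \mathbb{R}\alpha_1$, the three roots $\lambda,\alpha_0,\alpha_1$ are linearly independent, so the subsystem $R = \Delta \cap \operatorname{span}\{\lambda,\alpha_0,\alpha_1\}$ has rank $3$, not rank $2$: it cannot be ``one of the finitely many rank-$2$ systems,'' and the list $A_2$, $B_2$, $BC_2$, $G_2$ you propose to enumerate is the wrong list. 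Rank-$2$ subsystems only control a single $\alpha_k$-string; the $(\alpha_0,\alpha_1)$-string is a genuinely two-parameter set and lives in a rank-$3$ system. The paper's proof is exactly the corrected version of your idea: $R$ is either reducible, in which case $R \cong A_2 \oplus A_1$ (giving case (i)), or irreducible of rank $3$, hence isomorphic to $A_3$, $B_3$, $C_3$ or $BC_3$; inspecting these, $A_3$ and $B_3$ give (ii), $C_3$ gives (iii), and $BC_3$ gives (ii) or (iii) according to whether $\lambda$ is reduced or non-reduced.

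Two smaller gaps point the same way. In case (i) you only rule out $\lambda \pm \alpha_0$ and $\lambda \pm \alpha_1$; that does not yet give $[\lambda] = \{\lambda\}$, since a priori some $\lambda + n\alpha_0 + m\alpha_1$ with $|n|+|m| \geq 2$ could be a root. For instance $\lambda + \alpha_0 + \alpha_1$ needs a separate argument: if it were a root, then $\langle \lambda+\alpha_0+\alpha_1, \alpha_0 \rangle = \tfrac{1}{2}|\alpha_0|^2 > 0$ would force $\lambda + \alpha_1 \in \Delta_0$ by Proposition~\ref{p2.48}(\ref{sd}), a contradiction---and general $(n,m)$ requires an induction. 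Second, your case split by the signs of $\langle\lambda,\alpha_0\rangle$ and $\langle\lambda,\alpha_1\rangle$ is incomplete relative to the statement: the minimum-level hypothesis gives $\langle\lambda,\alpha_k\rangle \leq 0$ for both $k$, so besides the patterns covered by (i)--(iii) you must also exclude the pattern where $\lambda$ pairs strictly negatively with \emph{both} simple roots (which would produce a class containing $\lambda$, $\lambda+\alpha_0$, $\lambda+\alpha_1$, $\lambda+\alpha_0+\alpha_1$, matching none of the three cases). Both points come for free once the enumeration is carried out in the correct rank-$3$ setting, which is another reason the rank-$3$ reduction is the right tool here.
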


\begin{proof}
	Since $\lambda,\alpha_0,\alpha_1$ are linearly independent, they generate a root system $R \subseteq \Delta$ of rank $3$. We can assume that $\lambda,\alpha_0,\alpha_1$ are positive roots in $R$. 
	
	If $R$ is reducible, we must have $R \cong A_2 \oplus A_1$ with $A_2$ generated by $\alpha_0$ and $\alpha_1$ and $A_1$ generated by $\lambda$. It 
is clear that this is equivalent to $[\lambda] = \{\lambda\}$ and $\langle \lambda, \alpha_k \rangle = 0$ for $k \in \{0,1\}$, which corresponds to case (i). 
	
	If $R$ is irreducible, then $R$ is isomorphic to $A_3$, $B_3$, $C_3$ or $BC_3$. The result follows by inspecting these rank $3$ root systems case 
by case and taking into account that $\lambda$ has minimum level in its $(\alpha_0,\alpha_1)$-string. If $R \cong A_3$ or $R \cong B_3$, we get (ii). If $R \cong C_3$, we get (iii). Finally, if $R \cong BC_3$, then $\lambda$ is either reduced or non-reduced. If $\lambda$ is reduced, we get (ii), and if $\lambda$ is non-reduced, we get (iii).
\end{proof}
In view of Lemma~\ref{structure:strings} we have to investigate three cases. 

Case (i): $[\lambda] = \{\lambda\}$. From (\ref{shape}) and (\ref{bracket:relation}) we see that $\Ss_{\xi}$ vanishes on $\g{g}_\lambda = \g{g}_\lambda^\top$. 

Case (ii): $[\lambda] =  \{\lambda, \lambda + {\alpha_k}, \lambda + {\alpha_{k}} + {\alpha_{k+1}}\}$. We consider the subspace
\[
\g{g}_{ \lambda} \oplus \g{g}_{ \lambda + \alpha_k} \oplus \g{g}_{\lambda 
+ \alpha_k + \alpha_{k+1}} \subseteq \g{s}.
\]
We write $\xi = \cos(\varphi) \xi_k + \sin(\varphi) \xi_{k+1}$ for $\varphi \in [0, \frac{\pi}{2}]$, $\xi_k \in V_k$ and $\xi_{k+1} \in V_{k+1}$. Note that $A_{\alpha_k, \lambda} = -1$ and $A_{\alpha_{k+1}, \lambda + \alpha_k} = -1$. For the pairs $(\gamma, \nu) = (\lambda, \alpha_k)$ and $(\gamma, \nu) = (\lambda + \alpha_k, \alpha_{k+1})$ we obtain from Proposition~\ref{proposition:main1} that $\g{g}_{ \lambda + \alpha_k} = \phi_{\xi_k}(\g{g}_{\lambda})$ and $\g{g}_{\lambda + \alpha_k + \alpha_{k+1}} = (\phi_{\xi_{k+1}} \circ \phi_{\xi_k})(\g{g}_{\lambda})$. Let $0 \neq X_{\lambda} \in \g{g}_{\lambda}$. From (\ref{shape}) and (\ref{bracket:relation}), together with the fact that $\lambda + \alpha_{k+1} \notin \Delta$, we get
\[
\Ss_{\xi_{k+1}} X_{\lambda} = \Ss_{\xi_k}(\phi_{\xi_{k+1}} \circ \phi_{\xi_k})(X_{\lambda})=0.
\]
For the pair $(\gamma, \nu) \in \{(\lambda, \alpha_k),(\lambda + \alpha_k, \alpha_{k+1})\}$, we deduce from (\ref{definition:shape:operator}) and Proposition~\ref{proposition:main1} that
\begin{align*}
\Ss_{\xi_{k}} X_{\lambda} & = -\left(\nabla_{X_{\lambda}} \xi_{k} \right)^{\top} = \frac{|\alpha_0|}{2} \phi_{\xi_k}(X_{\lambda}), \\
\Ss_{\xi_{k}} \phi_{\xi_k}(X_{\lambda}) & = -\left(\nabla_{\phi_{\xi_k}(X_{\lambda})} \xi_{k} \right)^{\top} = \frac{|\alpha_0|}{2} X_{\lambda}, \\
\Ss_{\xi_{k+1}} \phi_{\xi_k}(X_{\lambda}) & = -\left(\nabla_{\phi_{\xi_k}(X_{\lambda})} \xi_{k+1} \right)^{\top}  = \frac{|\alpha_0|}{2} (\phi_{\xi_{k+1}} \circ \phi_{\xi_k})(X_{\lambda}),\\
\Ss_{\xi_{k+1}} (\phi_{\xi_{k+1}} \circ \phi_{\xi_k})(X_{\lambda}) & = -\left(\nabla_{(\phi_{\xi_{k+1}} \circ \phi_{\xi_k})(X_{\lambda})} \xi_{k+1} \right)^{\top}  = \frac{|\alpha_0|}{2} \phi_{\xi_k}(X_{\lambda}).
\end{align*}
Thus, the $3$-dimensional vector space spanned by the vectors $X$, $\phi_{\xi_k}(X)$ and $(\phi_{\xi_{k+1}} \circ \phi_{\xi_k})(X)$ is $\Ss_\xi$-invariant. It follows that the matrix representation of $\Ss_{\xi}$ is given by $\dim (\g{g}_{\lambda})$ blocks
\[
\frac{|{\alpha_0}|}{2} 
\left(\begin{array}{ccc} 0& \cos(\varphi) & 0 \\   \cos(\varphi) & 0 & \sin(\varphi)  \\  0& \sin(\varphi) & 0  
\end{array}\right)
\]
with respect to the decomposition $\g{g}_{ \lambda} \oplus \phi_{\xi_k}(\g{g}_{\lambda}) \oplus (\phi_{\xi_{k+1}} \circ \phi_{\xi_k})(\g{g}_{\lambda})$. An elementary calculation shows that $\Ss_\xi$ restricted to $\g{g}_{ \lambda} \oplus \g{g}_{ \lambda + \alpha_k} \oplus \g{g}_{\lambda + \alpha_k + \alpha_{k+1}}$ has the three eigenvalues $0$ and $\pm \frac{|\alpha_0|}{2}$, all of them with multiplicity $\dim(\g{g}_\lambda)$. Thus we established that the eigenvalues of $\Ss_\xi$ are independent of the choice of $\xi$ for case (ii). Note that cases (i) and (ii) together already settle the problem if $G/K$ is a symmetric space whose Dynkin diagram is of type $A_r$, $B_r$, $D_r$, $E_6$, $E_7$ or $E_8$. 

Case (iii): $[\lambda] =  \{\lambda, \lambda + {\alpha_k}, \lambda + {\alpha_k} + {\alpha_{k+1}}, \lambda + 2{\alpha_k}, \lambda + 2 {\alpha_k} + {\alpha_{k+1}}, \lambda + 2 {\alpha_k} + 2{\alpha_{k+1}} \}$. We consider the subspace 
\[
\g{g}_{\lambda} \oplus \g{g}_{\lambda + \alpha_k} \oplus \g{g}_{\lambda + 
2\alpha_k} \oplus \g{g}_{\lambda + \alpha_k + \alpha_{k+1}} \oplus \g{g}_{\lambda + 2 \alpha_k+ \alpha_{k+1}} \oplus \g{g}_{\lambda + 2 \alpha_k + 
2\alpha_{k+1}} \subseteq \g{s}.
\]
We need to understand better the behavior of the Levi-Civita connection when restricted to this subspace. As we did in Proposition \ref{proposition:main1}, we will calculate the Levi-Civita connection using the map $\phi_{\xi}$ defined in (\ref{definition:phi}).

\begin{proposition}\label{proposition:main2}
	Let $\gamma \in \Delta^{+}$ be the root of minimum level in its $\nu$-string, for $\nu \in \Delta^{+}$ non-proportional to $\gamma$ satisfying $A_{\nu, \gamma} = -2$. Let $\xi \in \g{g}_{\nu}$ be a unit vector with respect to $\langle \cdot, \cdot \rangle_{AN}$ and $X \in \g{g}_{\gamma}$. 
Then:
	\begin{itemize} 
		\item[(i)] $\nabla_X \xi = - \frac{|\nu|}{\sqrt{2}} {\phi_{\xi}(X)}$;
		\item[(ii)] $\nabla_{\phi_{\xi}(X)} \xi = - \frac{|\nu|}{\sqrt{2}} (X+\phi_{\xi}^{2}(X))$;
		\item[(iii)] $\nabla_{\phi_{\xi}^{2} (X)} \xi =- \frac{|\nu|}{\sqrt{2}}\phi_{\xi}(X)$;
		\item[(iv)] $\phi_{\xi}^{2} \rvert_{\g{g}_{\gamma}} \colon \g{g}_{\gamma} \to \g{g}_{\gamma+2\nu}$ is a linear isometry;
		\item[(v)] $\nabla_{W} \xi = 0$ for all $W \in \g{g}_{\gamma + \nu} \ominus \phi_{\xi}(\g{g}_{\gamma})$.
	\end{itemize}	
\end{proposition}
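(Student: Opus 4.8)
The plan is to reduce all five statements to the connection formula (\ref{levi:connection:examples}), namely $2\nabla_X\xi = [X,\xi] - [X,\theta\xi]$, and to evaluate the two brackets using the string calculus of Lemma~\ref{lemma:ad} specialised to $A_{\nu,\gamma} = -2$. The whole computation is governed by the string structure: since $\gamma$ has minimum level in its $\nu$-string and $A_{\nu,\gamma}=-2$, that string is exactly $\gamma,\gamma+\nu,\gamma+2\nu$, so $\gamma-\nu\notin\Delta_0$, $\gamma+3\nu\notin\Delta$, and $A_{\nu,\gamma+\nu}=0$. With the normalisation (\ref{definition:phi}) one has $\ad(\xi)=|\nu|\sqrt{2}\,\phi_\xi$ and $\ad(\theta\xi)=-|\nu|\sqrt{2}\,\phi_{\theta\xi}$, so the identities of Lemma~\ref{lemma:ad} translate into $\phi_{\theta\xi}\circ\phi_\xi=\id$ on $\g{g}_\gamma$ (Lemma~\ref{lemma:phi:injective}(ii)) and $\phi_{\theta\xi}\circ\phi_\xi^2=\phi_\xi$ on $\g{g}_\gamma$, the latter coming from Lemma~\ref{lemma:ad}(\ref{lemma:ad:iii}) together with $A_{\nu,\gamma+\nu}+A_{\nu,\gamma}=-2$.

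I would treat (i), (iii), (ii) in that order, as each uses a vanishing bracket to kill one of the two terms. For (i) the second term dies since $[X,\theta\xi]\in\g{g}_{\gamma-\nu}=\{0\}$, leaving $2\nabla_X\xi=[X,\xi]=-|\nu|\sqrt{2}\,\phi_\xi(X)$. For (iii) the first term dies since $[\phi_\xi^2(X),\xi]\in\g{g}_{\gamma+3\nu}=\{0\}$, so only $-[\phi_\xi^2(X),\theta\xi]$ survives, and rewriting it through $\phi_{\theta\xi}\circ\phi_\xi^2=\phi_\xi$ yields the claimed $-\frac{|\nu|}{\sqrt{2}}\phi_\xi(X)$. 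For (ii) both terms contribute: the first gives $-|\nu|\sqrt{2}\,\phi_\xi^2(X)$ and the second, via $\phi_{\theta\xi}\circ\phi_\xi=\id$, gives $-|\nu|\sqrt{2}\,X$, and adding produces $-\frac{|\nu|}{\sqrt{2}}(X+\phi_\xi^2(X))$.

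For (iv) I would first record that $\phi_{\theta\xi}$ is the $\langle\cdot,\cdot\rangle_{AN}$-adjoint of $\phi_\xi$ between consecutive root spaces, which is immediate from (\ref{cartan:inner}) and the fact that on $\g{n}$ the metric is a fixed multiple of $\langle\cdot,\cdot\rangle_{B_\theta}$. Then for $X,Y\in\g{g}_\gamma$ one computes $\langle\phi_\xi^2 X,\phi_\xi^2 Y\rangle_{AN}=\langle\phi_\xi X,\phi_{\theta\xi}\phi_\xi^2 Y\rangle_{AN}=\langle\phi_\xi X,\phi_\xi Y\rangle_{AN}=\langle X,Y\rangle_{AN}$, using the adjointness, the identity $\phi_{\theta\xi}\circ\phi_\xi^2=\phi_\xi$, and Lemma~\ref{lemma:phi:injective}(i). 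Hence $\phi_\xi^2$ is an isometric injection, and it is onto because $\dim\g{g}_\gamma=\dim\g{g}_{\gamma+2\nu}$ by Lemma~\ref{root:spaces:dimension}(ii).

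The real work is in (v), which I expect to be the main obstacle. For $W\in\g{g}_{\gamma+\nu}\ominus\phi_\xi(\g{g}_\gamma)$ I again have $2\nabla_W\xi=[W,\xi]-[W,\theta\xi]$ with $[W,\xi]\in\g{g}_{\gamma+2\nu}$ and $[W,\theta\xi]\in\g{g}_\gamma$, and I must show both vanish. The second is easy by adjointness: $\langle\phi_{\theta\xi}W,X\rangle_{AN}=\langle W,\phi_\xi X\rangle_{AN}=0$ for all $X\in\g{g}_\gamma$, so $[W,\theta\xi]$ is a multiple of $\phi_{\theta\xi}W=0$. The delicate term is $[W,\xi]$, a multiple of $\phi_\xi W$: here injectivity of $\phi_\xi$ is unavailable because $\gamma+\nu$ is not the bottom of the string. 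The key point I would use is that, by the surjectivity of $\phi_\xi^2$ from (iv) together with $\phi_{\theta\xi}\circ\phi_\xi^2=\phi_\xi$, the image $\phi_{\theta\xi}(\g{g}_{\gamma+2\nu})$ equals precisely $\phi_\xi(\g{g}_\gamma)$. Feeding this into $\langle\phi_\xi W,V\rangle_{AN}=\langle W,\phi_{\theta\xi}V\rangle_{AN}$ for $V\in\g{g}_{\gamma+2\nu}$ shows the right-hand side vanishes whenever $W\perp\phi_\xi(\g{g}_\gamma)$, so $\phi_\xi W=0$ and $\nabla_W\xi=0$. Identifying the image of $\phi_{\theta\xi}$ correctly is the crux; everything else is bookkeeping with the normalising constants.
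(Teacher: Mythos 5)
Your proposal is correct and follows essentially the same route as the paper's proof: both reduce everything to the connection formula $2\nabla_X\xi=[X,\xi]-[X,\theta\xi]$, evaluate the brackets via the normalisation in (\ref{definition:phi}) and Lemma~\ref{lemma:ad}(\ref{lemma:ad:iii}) (equivalently, your identity $\phi_{\theta\xi}\circ\phi_\xi^2=\phi_\xi$ on $\g{g}_\gamma$, which the paper also records explicitly), obtain (iv) from the isometry computation plus the dimension count of Lemma~\ref{root:spaces:dimension}, and settle (v) by the adjointness of $\phi_\xi$ and $\phi_{\theta\xi}$ together with the key identification $\phi_{\theta\xi}(\g{g}_{\gamma+2\nu})=\phi_\xi(\g{g}_\gamma)$. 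There are no gaps; the differences are purely organisational.
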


\begin{proof}
	Using (\ref{levi:connection:examples}) and (\ref{definition:phi}) we easily obtain $\nabla_X \xi = - \frac{|\nu|}{\sqrt{2}} {\phi_{\xi}(X)}$. The same arguments together with Lemma~\ref{lemma:phi:injective} show that
	\[
	\nabla_{\phi_{\xi}(X)} \xi = \frac{1}{2}([\phi_{\xi}(X), \xi] -[\phi_{\xi}(X), \theta \xi]) = - \frac{|\nu|}{\sqrt{2}}\left(\phi_{\xi}^{2}(X)+X\right).
	\]
	Note that $A_{\nu, \gamma + \nu} = 0$. Thus, combining (\ref{levi:connection:examples}), (\ref{definition:phi}) and the fact that $\gamma + 3\nu$ is not a root with Lemma~\ref{lemma:ad}(\ref{lemma:ad:iii}), we obtain
	\[
	\nabla_{\phi_{\xi}^{2} (X)} \xi = -\frac{1}{2}[\phi_{\xi}^{2} (X), \theta \xi]    =  \frac{1}{4|\nu|^{2}}  [\theta \xi, [\xi, [\xi, X]]] = -\frac{|\nu|}{\sqrt{2}} \phi_{\xi}(X).
	\]
	Moreover, using again Lemma~\ref{lemma:ad}(\ref{lemma:ad:iii}), we deduce
	\begin{align*}
	\langle \phi_{\xi}^{2}(Y), \phi_{\xi}^{2}(Z) \rangle_{AN} & = \frac{1}{4 |\nu|^{4} }\langle [\xi,[\xi, Y]], [\xi,[\xi, Z]] \rangle_{AN} \\ &= 
-\frac{1}{4 |\nu|^{4} }\langle [\xi, Y], [\theta \xi, [\xi,[\xi, Z]] ] \rangle_{AN} \\
	&   = \frac{1}{2 |\nu|^{2} } \langle [\xi, Y], [\xi, Z] \rangle_{AN}  = \langle \phi_{\xi}(Y), \phi_{\xi}(Z) \rangle_{AN} =\langle Y, Z \rangle_{AN}
	\end{align*}
	for $Y,Z \in \g{g}_{\gamma}$. It is then clear that $\phi_{\xi}^{2}$ is an injective linear map preserving the inner product when restricted to $\g{g}_{\gamma}$. Furthermore, from Lemma \ref{root:spaces:dimension} we know that $\dim(\g{g}_{\gamma}) = \dim(\g{g}_{\gamma + 2\nu})$, and thus 
$\phi_{\xi}^{2} \rvert_{\g{g}_{\gamma}} \colon \g{g}_{\gamma} \to \g{g}_{\gamma+2\nu} $ is a linear isometry. Note that Lemma~\ref{lemma:ad}(\ref{lemma:ad:iii}) for $A_{\nu, \gamma} = -2$ is equivalent to $(\phi_{\theta \xi} \circ \phi^{2}_{\xi})\rvert_{\g{g}_{\gamma}}= \phi_{\xi} \rvert_{\g{g}_{\gamma}}$. Then, we deduce that $\phi_{\xi}(\g{g}_{\gamma}) = \phi_{\theta \xi}(\g{g}_{\gamma+ 2 \nu})$. To complete the proof, fix a vector $W \in \g{g}_{\gamma+\nu} \ominus \phi_{\xi}(\g{g}_{\gamma}) = \g{g}_{\gamma+\nu} \ominus \phi_{\theta \xi}(\g{g}_{\gamma + 2 \nu}) $. On the one hand, we have $\langle \phi_{\xi} (W), Y \rangle_{AN} = \langle 
W, \phi_{\theta \xi}(Y) \rangle_{AN} = 0$ for all $Y \in \g{g}_{\gamma+2\nu}$. On the other hand, $\langle \phi_{\theta \xi} (W), Z \rangle_{AN} 
= \langle W, \phi_{\xi}(Z) \rangle_{AN} = 0$ for all $Z \in \g{g}_{\gamma}$. This implies $\nabla_{W} \xi =0$ for all $W \in \g{g}_{\gamma+\nu} \ominus \phi_{\xi}(\g{g}_{\gamma}) $, which finishes the proof.
\end{proof}

Let $\xi \in V$ be a unit vector and, as above, put $\xi = \cos(\varphi) \xi_k + \sin(\varphi) \xi_{k+1}$. We first study the shape operator $\Ss_{\xi}$ on the subspace 
\begin{equation}\label{structure:strings:decomposition}
\begin{aligned}
&\g{g}_{\lambda} \oplus \phi_{\xi_{k}}(\g{g}_{\lambda}) \oplus \phi^2_{\xi_{k}}(\g{g}_{\lambda}) \oplus (\phi_{\xi_{k+1}}\circ \phi_{\xi_k}) (\g{g}_{\lambda})\\
& \oplus (\phi_{\xi_{k+1}}\circ \phi^{2}_{\xi_k}) (\g{g}_{\lambda}) \oplus (\phi^{2}_{\xi_{k+1}}\circ \phi^{2}_{\xi_k})(\g{g}_{\lambda}).
\end{aligned}
\end{equation}
Let  $0 \neq X_{\lambda} \in\g{g}_{\lambda}$. First, using (\ref{shape}), 
(\ref{bracket:relation}) and the fact that neither $\lambda + \alpha_{k+1}$ nor $\lambda + 2\alpha_{k+1} + \alpha_k$ are roots, we deduce
\[
\Ss_{\xi_{k+1}}X_{\lambda} = \Ss_{\xi_k} (\phi^{2}_{\xi_{k+1}} \circ \phi^{2}_{\xi_k})(X_{\lambda}) = 0.
\]
We will analyze the $\alpha_k$-string of $\lambda$ and the $\alpha_{k+1}$-string of $\lambda + 2 \alpha_k$ simultaneously. Let $\mu \in \{\lambda, 
\lambda + 2 \alpha_k\}$ and define $r(\mu) = k$ if $\mu = \lambda$ and $r(\mu) = k+1$ otherwise. Put $X_{\mu} = X_{\lambda}$ if $\mu = \lambda$ and $X_{\mu} = \phi^{2}_{\xi_k}(X_{\lambda})$ otherwise. Using (\ref{definition:shape:operator}) and Proposition~\ref{proposition:main2} 
we obtain
\begin{align*}
\Ss_{\xi_{r(\mu)}} X_{\mu}  & = - \left( \nabla_{X_{\mu}} \xi_{r(\mu)} \right)^{\top} = \frac{|\alpha_0|}{\sqrt{2}} \phi_{\xi_{r(\mu)}}(X_{\mu}),\\
\Ss_{\xi_{r(\mu)}} \phi_{\xi_{r(\mu)}}(X_{\mu})  & = - \left( \nabla_{\phi_{\xi_{r(\mu)}}(X_{\mu})} \xi_{r(\mu)} \right)^{\top} =\frac{|\alpha_0|}{\sqrt{2}} (X_{\mu} + \phi^{2}_{\xi_{r(\mu)}}(X_{\mu})), \\
\Ss_{\xi_{r(\mu)}} \phi^{2}_{\xi_{r(\mu)}}(X_{\mu})  & = - \left( \nabla_{\phi^{2}_{\xi_{r(\mu)}}(X_{\mu})} \xi_{r(\mu)} \right)^{\top} = \frac{|\alpha_0|}{\sqrt{2}} \phi_{\xi_{r(\mu)}}(X_{\mu}).
\end{align*}
Note that $A_{\alpha_{k+1}, \lambda + \alpha_k} = A_{\alpha_k, \lambda + \alpha_k +\alpha_{k+1}} = -1$. Then, using (\ref{definition:shape:operator}) and Proposition~\ref{proposition:main1} for the pair $(\gamma, \nu) \in \{(\lambda + \alpha_k, \alpha_{k+1}), (\lambda + \alpha_k + \alpha_{k+1}, \alpha_k)\}$, we get
\begin{align*}
\Ss_{\xi_{k+1}} \phi_{\xi_k}(X_{\lambda}) & = \frac{|\alpha_0|}{2}(\phi_{\xi_{k+1}} \circ \phi_{\xi_k})(X_{\lambda}), \\
\Ss_{\xi_{k+1}} (\phi_{\xi_{k+1}} \circ \phi_{\xi_k})(X_{\lambda}) & = \frac{|\alpha_0|}{2}\phi_{\xi_k}(X_{\lambda}), \\
\Ss_{\xi_k} (\phi_{\xi_{k+1}} \circ \phi_{\xi_k})(X_{\lambda}) & = \frac{|\alpha_0|}{2}(\phi_{\xi_k} \circ \phi_{\xi_{k+1}} \circ \phi_{\xi_k})(X_{\lambda}), 
\end{align*}
\begin{align*}
\Ss_{\xi_k} (\phi_{\xi_k} \circ \phi_{\xi_{k+1}} \circ \phi_{\xi_k})(X_{\lambda}) & = \frac{|\alpha_0|}{2}(\phi_{\xi_{k+1}} \circ \phi_{\xi_k})(X_{\lambda}).
\end{align*}

So far we calculated the shape operator $\Ss_{\xi}$ on the subspace in (\ref{structure:strings:decomposition}). However, all this information is not conclusive as $(\phi_{\xi_k} \circ \phi_{\xi_{k+1}} \circ \phi_{\xi_k})(X_{\lambda})$ and $(\phi_{\xi_{k+1}} \circ \phi_{\xi_k}^{2})(X_{\lambda})$ both belong to  $\g{g}_{\lambda+2\alpha_k + \alpha_{k+1}}$, but we do 
not know how they are related. Consider the $(\alpha_k, \alpha_{k+1})$-string containing $\lambda$:
\begin{equation*}
\begin{tikzpicture}[scale=0.35]
\draw(-15.0,0.) circle (0.5cm);
\draw(-6.,0.) circle (0.5cm);
\draw(6,0.) circle (0.5cm);
\draw(0,3.5) circle (0.5cm);
\draw(0,-3.5) circle (0.5cm);
\draw(15.,0.) circle (0.5cm);
\draw[-  triangle 45] (7,0.) -- (14,0);
\draw[-  triangle 45] (-14,0.0)-- (-7,0.0);
\draw[-  triangle 45] (-5,0.35)-- (-1,3.15);
\draw[triangle 45 -] (5,0.35) -- (1,3.15);
\draw[-  triangle 45] (-5,-0.35)-- (-1,-3.15);
\draw[triangle 45 -] (5,-0.35)-- (1,-3.15);
\begin{scriptsize}
\draw[color=black] (-10.5, 0.5) node {$\ad(\xi_k)$};
\draw[color=black] (10.5, 0.5) node {$\ad(\xi_{k+1})$};
\draw[color=black] (-3.75, 2.5) node {$\ad(\xi_k)$};
\draw[color=black] (-4.2, -2.5) node {$\ad(\xi_{k+1})$};
\draw[color=black] (4, 2.5) node {$\ad(\xi_{k+1})$};
\draw[color=black] (3.75, -2.5) node {$\ad(\xi_k)$};
\draw[color=black] (-15, 1.1) node {$\g{g}_{\lambda}$};
\draw[color=black] (-6.2, 1.3) node {$\g{g}_{\lambda+\alpha_k}$};
\draw[color=black] (6.5, 1.3) node {$\g{g}_{\lambda+2\alpha_k + \alpha_{k+1}}$};
\draw[color=black] (0,4.3) node {$\g{g}_{\lambda+2\alpha_k}$};
\draw[color=black] (0,-4.3) node {$\g{g}_{\lambda +  \alpha_k + \alpha_{k+1}}$};
\draw[color=black] (15.3, 1.1) node {$\g{g}_{\lambda +2\alpha_k + 2 \alpha_{k+1}}$};
\end{scriptsize}
\end{tikzpicture}
\end{equation*}
(Note that the nodes in this diagram represent root spaces and not roots.) The problem is that it is not clear whether or not the square diagram in the middle is commutative. More precisely, we do not yet understand the 
behavior of the vector $\phi_{\xi_k}(X_{\lambda})$ depending on the part of the diagram it follows. In terms of brackets, the key point is to understand the relation between $[[\phi_{\xi_k}(X_{\lambda}), \xi_{k}], \xi_{k+1}]$ and  $[[\phi_{\xi_k}(X_{\lambda}), \xi_{k+1}], \xi_{k}]$. Using  (\ref{definition:phi}) and the Jacobi identity twice, we obtain
\begin{align*}
\sqrt{2}|\alpha_0|[\phi_{\xi_k}(X_{\lambda}), [\xi_{k+1}, \xi_{k}]] & = 
- [[X_{\lambda}, \xi_{k}],[\xi_{k+1}, \xi_{k}]] \\
& =[[\xi_k,[\xi_{k+1}, \xi_{k}]],X_{\lambda}] + [[[\xi_{k+1}, \xi_{k}], 
X_{\lambda}], \xi_k] \\& 
=[[[\xi_{k+1}, \xi_{k}], X_{\lambda}], \xi_k] \\
& = -[[[X_{\lambda}, \xi_{k+1}], \xi_k], \xi_k]- [[[\xi_{k}, X_{\lambda}], \xi_{k+1}], \xi_{k}] \\
& = - [[[\xi_{k}, X_{\lambda}], \xi_{k+1}], \xi_{k}] \\ &= -\sqrt{2}|\alpha_0|[[\phi_{\xi_k}(X_{\lambda}), \xi_{k+1}], \xi_k].
\end{align*}
Using the last equality and writing $Y = \phi_{\xi_k}(X_{\lambda})$ for 
the sake of simplicity, we deduce
\begin{align*}
2|\alpha_0|^2(\phi_{\xi_{k+1}} \circ \phi_{\xi_k}) (Y) & =  [\xi_{k+1}, 
[\xi_k, Y]] = - ([\xi_{k}, [Y, \xi_{k+1}]] + [Y, [\xi_{k+1},\xi_{k}]]) \\&=  [[Y, \xi_{k+1}], \xi_{k}] - [Y, [\xi_{k+1},\xi_{k}]] \\
& = [[Y, \xi_{k+1}], \xi_{k}]+[[Y, \xi_{k+1}], \xi_k]  =  2[\xi_k, [\xi_{k+1}, Y]] \\
& = 4|\alpha_0|^2(\phi_{\xi_{k}} \circ \phi_{\xi_{k+1}}) (Y),
\end{align*}
which proves that the diagram is commutative up to a constant. In particular, we established that the vector space spanned by the vectors
\begin{align*}
& X_{\lambda},\ \phi_{\xi_{k}}(X_{\lambda}),\ \phi^2_{\xi_{k}}(X_{\lambda}),\ (\phi_{\xi_{k+1}}\circ \phi_{\xi_k}) (X_{\lambda}),\\ &(\phi_{\xi_{k+1}}\circ \phi^{2}_{\xi_k}) (X_{\lambda}),\ (\phi^{2}_{\xi_{k+1}}\circ \phi^{2}_{\xi_k})(X_{\lambda})
\end{align*}
is $\Ss_{\xi}$-invariant. Therefore, the matrix representation of the shape operator $\Ss_{\xi}$ on that subspace is given by $\dim(\g{g}_{\lambda})$ blocks of the form
\[
\frac{|\alpha_0|}{2}\left(
\begin{array}{@{}c@{}c@{}c@{\ }c@{}c@{}c@{}}
0 & \sqrt{2}\cos (\varphi ) & 0 & 0 & 0 & 0 \\
\sqrt{2}\cos (\varphi ) & 0 & \sqrt{2}\cos (\varphi ) & \sin (\varphi ) & 
0 & 0 \\
0 & \sqrt{2}\cos (\varphi ) & 0 & 0 & \sqrt{2}\sin (\varphi ) & 0 \\
0 & \sin (\varphi ) & 0 & 0 & \cos (\varphi ) & 0 \\
0 & 0 & \sqrt{2}\sin (\varphi ) & \cos (\varphi ) & 0 & \sqrt{2}\sin (\varphi ) \\
0 & 0 & 0 & 0 & \sqrt{2}\sin (\varphi ) & 0 \\
\end{array}
\right)
\]
with respect to the decomposition in (\ref{structure:strings:decomposition}). A straightforward calculation shows that the eigenvalues of $\Ss_{\xi}$ are $\pm |{\alpha_0}|, \pm \frac{|{\alpha_0}|}{2} ,0 $, each of them with multiplicity $\dim(\g{g}_{\lambda})$, except $0$, which has multiplicity $2 \dim(\g{g}_{\lambda})$.

Finally, from Lemma~\ref{root:spaces:dimension} and Lemma~\ref{lemma:ad} we see that
\begin{align*}
& \dim(\g{g}_{\lambda}) = \dim(\g{g}_{\lambda + 2 \alpha_k}) = \dim(\g{g}_{\lambda + 2 \alpha_k + 2 \alpha_{k+1}}) \\
& \leq \dim(\g{g}_{\lambda + \alpha_k}) = \dim(\g{g}_{\lambda +  \alpha_k +  \alpha_{k+1}}) = \dim(\g{g}_{\lambda + 2 \alpha_k +  \alpha_{k+1}}),
\end{align*}
where indices are modulo 2. Define $U = \g{g}_{\lambda + \alpha_k} \ominus \phi_{\xi_k} (\g{g}_{\lambda})$. We still need to analyze the behavior of $\Ss_{\xi}$ on the vector space
\[
U \oplus \phi_{\xi_{k+1}}(U) \oplus (\phi_{\xi_k} \circ \phi_{\xi_{k+1}})(U).
\]
Let  $0 \neq X \in U$. On the one hand, using (\ref{definition:shape:operator}) and Proposition~\ref{proposition:main2}, we obtain
\[
\Ss_{\xi_k} X = \Ss_{\xi_{k+1}} (\phi_{\xi_k} \circ \phi_{\xi_{k+1}}) (X)= 0.
\]
Note that $A_{\alpha_{k+1}, \lambda + \alpha_k} = -1$ and $A_{\alpha_k, 
\lambda + \alpha_k + \alpha_{k+1}} = -1$. On the other hand, using (\ref{definition:shape:operator}) and Proposition~\ref{proposition:main1} for 
the pair $(\lambda, \nu) \in \{(\lambda + \alpha_k, \alpha_{k+1}), ((\lambda + \alpha_k + \alpha_{k+1}, \alpha_k))\}$  we obtain
\begin{align*}
\Ss_{\xi}(X) & = \frac{|\alpha_0|}{2} \sin (\varphi) \phi_{\xi_{k+1}}(X), \\
\Ss_{\xi}\phi_{\xi_{k+1}}(X) & = \frac{|\alpha_0|}{2} (\sin (\varphi) X 
+\cos (\varphi)(\phi_{\xi_k} \circ \phi_{\xi_{k+1}})(X)), \\
\Ss_{\xi}(\phi_{\xi_k} \circ \phi_{\xi_{k+1}})(X) & = \frac{|\alpha_0|}{2} \cos(\varphi) \phi_{\xi_{k+1}}(X).
\end{align*}
Since the vector space generated by the vectors  $X, \phi_{\xi_{k+1}}(X),(\phi_{\xi_k} \circ \phi_{\xi_{k+1}})(X)$ is $\Ss_{\xi}$-invariant, the matrix representation of $\Ss_{\xi}$ on $U \oplus \phi_{\xi_{k+1}}(U) \oplus (\phi_{\xi_k} \circ \phi_{\xi_{k+1}})(U)$ is given by $(\dim(\g{g}_{\lambda + \alpha_k}) - \dim(\g{g}_{\lambda}))$ blocks of the form
\[
\frac{|{\alpha_0}|}{2} 
\left(\begin{array}{ccc} 0& \sin(\varphi) & 0 \\   \sin(\varphi) & 0 & \cos(\varphi)  \\  0& \cos(\varphi) & 0  
\end{array}\right).
\]
The eigenvalues are $0$, $|\alpha_0|/2$ and $- |\alpha_0|/2$, each of them with multiplicity $\dim(\g{g}_{\lambda + \alpha_k}) -\dim(\g{g}_{\lambda})$. Altogether we have now established that the canonical extensions are also CPC submanifolds.

\section{The classification}\label{classification}

In this section we finish the classification in the Main Theorem. We will 
show that if $S \cdot o$ is a CPC submanifold of $M = G/K$, then it must be one of the examples presented in the Main Theorem. More precisely, we will prove that if $S \cdot o$ is a CPC submanifold, then either $V \subseteq \g{g}_{\alpha}$ for some $\alpha \in \Pi'$ or there exist $\alpha_0,\alpha_1 \in \Pi'$ with $A_{\alpha_0, \alpha_1} =A_{\alpha_1, \alpha_0} = -1$ and $V \subseteq \g{g}_{\alpha_0} \oplus \g{g}_{\alpha_1}$. Together with Theorem~\ref{characterisation2} this finishes the classification part of the Main Theorem. We start with a result about the principal curvatures of the submanifold $S \cdot o$. Recall that, according to (\ref{V:decomposition}), we can write $V = \bigoplus_{\alpha \in \psi} V_{\alpha}$, where $V_{\alpha}$ is a non-trivial subspace of $\g{g}_{\alpha}$ 
for each $\alpha \in \psi$. 

\begin{proposition}\label{proposition:principalcurvatures}
	Let $\g{s} = \g{a} \oplus (\g{n} \ominus V)$ be a subalgebra of $\g{a} 
\oplus \g{n}$ with $V = \oplus_{\alpha \in \psi}\ V_{\alpha}$ and $\psi 
\subseteq \Pi'$. Let $\gamma \in \Delta^{+}$ be the root of minimum level 
in its $\nu$-string, for $\nu \in \psi$ non-proportional to $\gamma$. Let 
$I$ be the set of roots in the $\nu$-string of $\gamma$. Consider the restriction of the shape operator $\Ss_{\xi}$ of $S \cdot o$ to the vector space $\bigoplus_{\alpha \in I} \g{g}_{\alpha}^{\top}$, where $\xi$ is a unit vector in $V_{\nu}$.
	\begin{enumerate}[{\rm (i)}]
		\item If $A_{\nu,\gamma} = -1$, then $\pm \frac{|\nu|}{2}$ are principal curvatures and both with multiplicity $\dim(\g{g}_{\gamma}^{\top})$. \label{proposition:principalcurvatures:i}
		\item If $A_{\nu,\gamma} = -2$, then $\pm |\nu|$ are principal curvatures and both with multiplicity $\dim(\g{g}_{\gamma}^{\top})$, and $\pm \frac{|\nu|}{\sqrt{2}}$ are principal curvatures, both with multiplicity $\dim(V_{\gamma})$. \label{proposition:principalcurvatures:ii}
	\end{enumerate}
\end{proposition}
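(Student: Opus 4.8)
The plan is to compute the restriction of $\Ss_\xi$ to $\bigoplus_{\alpha\in I}\g{g}_\alpha^\top$ explicitly, exploiting the isometries $\phi_\xi$ (and $\phi_\xi^2$) supplied by Propositions~\ref{proposition:main1} and~\ref{proposition:main2} to split the subspace into small $\Ss_\xi$-invariant blocks, and then reading off the eigenvalues from elementary constant matrices. The first thing I would record is how the orthogonal projection onto $\g{n}\ominus V$ behaves along the $\nu$-string. Since $\nu\in\psi\subseteq\Pi'$ is simple and $\gamma$ has minimum level in its $\nu$-string, the higher roots $\gamma+\nu$ (and, in case (ii), $\gamma+2\nu$) are non-simple, hence meet $V$ trivially, so that $\g{g}_{\gamma+\nu}^\top=\g{g}_{\gamma+\nu}$ and $\g{g}_{\gamma+2\nu}^\top=\g{g}_{\gamma+2\nu}$; only at $\gamma$ does the projection act nontrivially, giving $\g{g}_\gamma^\top=\g{g}_\gamma\ominus V_\gamma$ (with $V_\gamma=\{0\}$ when $\gamma\notin\psi$). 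This asymmetry of $(\cdot)^\top$ is the entire source of the two distinct multiplicities in the statement, so I would isolate it at the outset.

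For case (i) the string is $\{\gamma,\gamma+\nu\}$ and $\phi_\xi\colon\g{g}_\gamma\to\g{g}_{\gamma+\nu}$ is an isometry onto by Lemma~\ref{root:spaces:dimension} and Proposition~\ref{proposition:main1}. I would split $\g{g}_\gamma=\g{g}_\gamma^\top\oplus V_\gamma$ and correspondingly $\g{g}_{\gamma+\nu}=\phi_\xi(\g{g}_\gamma^\top)\oplus\phi_\xi(V_\gamma)$. Using $\Ss_\xi X=-(\nabla_X\xi)^\top$ together with $\nabla_X\xi=-\tfrac{|\nu|}{2}\phi_\xi(X)$ and $\nabla_{\phi_\xi(X)}\xi=-\tfrac{|\nu|}{2}X$, each pair $\{X,\phi_\xi X\}$ with $X\in\g{g}_\gamma^\top$ spans an invariant plane on which $\Ss_\xi$ is $\tfrac{|\nu|}{2}$ times the off-diagonal swap, giving eigenvalues $\pm\tfrac{|\nu|}{2}$; on $\phi_\xi(V_\gamma)$, by contrast, the return term is $(W)^\top=0$ and $\Ss_\xi$ vanishes. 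Thus $\pm\tfrac{|\nu|}{2}$ occur with multiplicity $\dim(\g{g}_\gamma^\top)$ each, as claimed.

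For case (ii) the string is $\{\gamma,\gamma+\nu,\gamma+2\nu\}$ and I would invoke Proposition~\ref{proposition:main2}: $\phi_\xi^2\colon\g{g}_\gamma\to\g{g}_{\gamma+2\nu}$ is an isometry onto, $\phi_\xi$ is an isometry onto its image, and $\nabla_W\xi=0$ on the complement $U=\g{g}_{\gamma+\nu}\ominus\phi_\xi(\g{g}_\gamma)$. The three-term formulas for $\nabla_X\xi$, $\nabla_{\phi_\xi X}\xi$, $\nabla_{\phi_\xi^2 X}\xi$ show that, for $X\in\g{g}_\gamma^\top$, the triple $\{X,\phi_\xi X,\phi_\xi^2 X\}$ is invariant and $\Ss_\xi$ equals $\tfrac{|\nu|}{\sqrt2}$ times the tridiagonal matrix with unit off-diagonals, whose eigenvalues $0,\pm\sqrt2$ yield $0,\pm|\nu|$. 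For $W\in V_\gamma$, however, $W$ itself lies outside the subspace and the return term $(W)^\top$ vanishes, so $\{\phi_\xi W,\phi_\xi^2 W\}$ spans an invariant plane with eigenvalues $\pm\tfrac{|\nu|}{\sqrt2}$; and $U\subseteq\ker\Ss_\xi$. These blocks, together with the decompositions $\g{g}_{\gamma+\nu}=\phi_\xi(\g{g}_\gamma^\top)\oplus\phi_\xi(V_\gamma)\oplus U$ and $\g{g}_{\gamma+2\nu}=\phi_\xi^2(\g{g}_\gamma^\top)\oplus\phi_\xi^2(V_\gamma)$, exhaust the subspace; a dimension count $3\dim(\g{g}_\gamma^\top)+2\dim(V_\gamma)+\dim U$ confirms nothing is omitted, and produces $\pm|\nu|$ with multiplicity $\dim(\g{g}_\gamma^\top)$ and $\pm\tfrac{|\nu|}{\sqrt2}$ with multiplicity $\dim(V_\gamma)$.

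The only genuinely delicate point is the bookkeeping of which vectors survive the $\top$-projection. The formulas of Propositions~\ref{proposition:main1} and~\ref{proposition:main2} are stated before projection, and it is precisely the dichotomy between the $\g{g}_\gamma^\top$-part (where $(\cdot)^\top$ acts as the identity) and the $V_\gamma$-part (where it acts as zero) that separates the $\pm|\nu|$ eigenspaces from the $\pm\tfrac{|\nu|}{\sqrt2}$ eigenspaces. Once this distinction is tracked carefully, everything reduces to diagonalising the two small matrices above.
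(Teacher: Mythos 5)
Your proof is correct and follows essentially the same route as the paper's own argument: both decompose $\bigoplus_{\alpha\in I}\g{g}_{\alpha}^{\top}$ into $\Ss_{\xi}$-invariant blocks via the isometries $\phi_{\xi}$ and $\phi_{\xi}^{2}$ from Propositions~\ref{proposition:main1} and~\ref{proposition:main2}, separate the $\g{g}_{\gamma}^{\top}$-part from the $V_{\gamma}$-part through the tangential projection (which kills the return term on $V_{\gamma}$), and read the eigenvalues off the same $2\times 2$ swap and $3\times 3$ tridiagonal matrices. There are no gaps; the explicit dimension count you include at the end only makes explicit an exhaustiveness check that the paper leaves implicit.
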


\begin{proof}
	Assume first that $A_{\nu, \gamma} = -1$. In this case the $\nu$-string of $\gamma$ consists of $\gamma,\gamma + \nu$. Since $\gamma + \nu \notin \Pi$, we have $\g{g}_{\gamma + \nu}^{\top} = \g{g}_{\gamma + \nu}$. Let $\xi \in V_{\nu}$ be a unit vector and consider the restriction of the shape operator $\Ss_{\xi}$ to $\g{g}_{\gamma}^{\top} \oplus \g{g}_{\gamma + \nu}$. From (\ref{definition:shape:operator}) and Proposition \ref{proposition:main1} we get
	\begin{equation*}
	\begin{aligned}
	\Ss_{\xi} X  &{}= -(\nabla_{X} \xi)^{\top} = \frac{|\nu|}{2} \phi_{\xi}(X) , \\
	\Ss_{\xi} \phi_{\xi}(X) &{} = -(\nabla_{\phi_{\xi}(X)} \xi)^{\top} = 
\frac{|\nu|}{2} X 
	\end{aligned}
	\end{equation*}
	for $X \in \g{g}_{\gamma}^{\top}$. Then the $2$-dimensional vector space 
spanned by $X, \phi_{\xi}(X)$ is $\Ss_{\xi}$-invariant for all $0 \neq X \in \g{g}_{\gamma}^{\top}$ and all unit vectors $\xi \in V_{\nu}$. Thus the matrix representation of $\Ss_{\xi}$ on $\g{g}_{\gamma}^{\top} \oplus \phi_{\xi} (\g{g}_{\gamma}^{\top})$ consists of $\dim(\g{g}_{\gamma}^{\top})$ blocks of the form
	\[
	\frac{|\nu|}{2} 
	\left(\begin{array}{cc} 0 & 1  \\   1 & 0   
	\end{array}\right).
	\]
	Finally, let $Y \in \phi_{\xi}(V_{\gamma})$ and write $Y = \phi_{\xi}(\eta)$ with $\eta \in V_{\gamma}$. From (\ref{definition:shape:operator}) 
and Proposition~\ref{proposition:main1} we obtain 
	\[
	\Ss_{\xi} Y = \Ss_{\xi} \phi_{\xi}(\eta) = - (\nabla_{\phi_{\xi}(\eta)} \xi)^\top = \frac{|\nu|}{2} \eta^{\top} = 0.
	\]
	Therefore, $\pm \frac{|\nu|}{2}$ are the non-zero principal curvatures of $\Ss_{\xi}$ on  $\g{g}_{\gamma}^{\top} \oplus \g{g}_{\gamma + \nu}$, and both have multiplicity $\dim(\g{g}_{\gamma}^{\top})$. This proves (\ref{proposition:principalcurvatures:i}).
	
	Now assume that $A_{\nu, \gamma} = -2$. Then the $\nu$-string of $\gamma$ consists of $\gamma,\gamma + \nu,\gamma + 2\nu$. Since $\gamma + \nu$ 
and $\gamma + 2\nu$ are not simple roots, we have $\g{g}_{\gamma+\nu}^{\top} = \g{g}_{\gamma+\nu}$ and $\g{g}_{\gamma+2\nu}^{\top} = \g{g}_{\gamma+2\nu}$. Let $\xi$ be a unit vector in $V_{\nu}$ and consider the restriction of the shape operator $\Ss_{\xi}$ to $\g{g}_{\gamma}^{\top} \oplus \g{g}_{\gamma+\nu} \oplus \g{g}_{\gamma+2\nu}$. Let $X \in \g{g}_{\gamma}^{\top}$. From (\ref{definition:shape:operator}) and Proposition~\ref{proposition:main2} we obtain
	\begin{align*}
	\Ss_{\xi} X & = -\left(\nabla_{X} \xi\right)^{\top} = \frac{|\nu|}{\sqrt{2}} \phi_{\xi} (X), \\
	\Ss_{\xi} \phi_{\xi} (X) & = - \left( \nabla_{\phi_{\xi} (X)} \xi \right)^{\top} = \frac{|\nu|}{\sqrt{2}} (\phi^{2}_{\xi} (X) + X)^{\top} = 
\frac{|\nu|}{\sqrt{2}} (\phi^{2}_{\xi} (X) + X), \\
	\Ss_{\xi} \phi^{2}_{\xi}(X) & = - \left(\nabla_{\phi^{2}_{\xi} (X)} \xi \right) = \frac{|\nu|}{\sqrt{2}} \phi_{\xi} (X).
	\end{align*}
	Thus the $3$-dimensional vector space spanned by $X, \phi_{\xi}(X), \phi_{\xi}^{2}(X)$ is $\Ss_{\xi}$-invariant for all $0 \neq X \in \g{g}_{\gamma}^{\top}$ and all unit vectors $\xi \in V_{\nu}$. Thus the matrix representation of $\Ss_{\xi}$ on $\g{g}_{\gamma}^{\top} \oplus \phi_{\xi} (\g{g}_{\gamma}^{\top}) \oplus \phi_{\xi}^{2} (\g{g}_{\gamma}^{\top})$ consists of $\dim(\g{g}_{\gamma}^{\top})$ blocks of the form
	\[
	\frac{|\nu|}{\sqrt{2}} 
	\left(\begin{array}{ccc} 0& 1 & 0 \\   1 & 0 & 1  \\  0& 1 & 0  
	\end{array}\right).
	\]
	This shows that $\pm |\nu|$ are principal curvatures of $\Ss_{\xi}$ with 
multiplicities at least $\dim(\g{g}_{\gamma}^{\top})$. There are two other cases to analyze. Assume that $X \in \phi_{\xi}(V_{\gamma})$ and write $X = \phi_{\xi}(\eta)$ with $\eta \in V_{\gamma}$. From (\ref{definition:shape:operator}) and Proposition \ref{proposition:main2} we deduce
	\begin{align*}
	\Ss_{\xi} X & = \Ss_{\xi} \phi_{\xi} (\eta) = -\left(\nabla_{\phi_{\xi}(\eta)} \xi \right)^{\top} = \frac{|\nu|}{\sqrt{2}}(\phi^{2}_{\xi}(\eta) + \eta)^{\top} = \frac{|\nu|}{\sqrt{2}}\phi^{2}_{\xi}(\eta),\\
	\Ss_{\xi} \phi^{2}_{\xi}(\eta) & = - \left(\nabla_{\phi^{2}_{\xi} (\eta)} \xi\right) = \frac{|\nu|}{\sqrt{2}} \phi_{\xi} (\eta).
	\end{align*}
	So the $2$-dimensional vector space spanned by $ \phi_{\xi}(\eta), \phi_{\xi}^{2}(\eta) $ is $\Ss_{\xi}$-invariant for all $0 \neq \eta \in V_{\gamma}$ and all unit vectors $\xi \in V_{\nu}$. Thus the matrix representation of $\Ss_{\xi}$ on $\phi_{\xi}(V_{\gamma}) \oplus \phi_{\xi}^{2}(V_{\gamma})$ consists of $\dim(V_{\gamma})$ blocks of the form
	\[
	\frac{|\nu|}{\sqrt{2}} 
	\left(\begin{array}{cc} 0 & 1  \\   1 & 0  
	\end{array}\right).
	\]
	Consequently, $\pm |\nu|$ and $\pm \frac{|\nu|}{\sqrt{2}}$ are principal 
curvatures with multiplicities at least $\dim(\g{g}_{\gamma}^{\top})$ and 
$\dim(V_{\gamma})$, respectively. Finally, assume that $X \in \g{g}_{\gamma+\nu} \ominus \phi_{\xi}(\g{g}_{\gamma})$. From (\ref{definition:shape:operator}) and Proposition~\ref{proposition:main2} we deduce 
	\[
	\Ss_{\xi} X  = - (\nabla_{X} \xi)^{\top} = 0.
	\]
	This finishes the proof.
\end{proof}

We will now show that if $S \cdot o$ is a CPC submanifold, then all roots 
in $\psi$ must have the same length. We will start by investigating the symmetric spaces $G_{2}^{2}/SO_{4}$ and $G_{2}^{\mathbb{C}}/G_2$.

\begin{proposition}\label{proposition:G2}
	Let $M =G/K$ be a symmetric space of non-compact type whose Dynkin diagram is of type $G_2$. Let $\alpha_0$ and $\alpha_1$ be its simple roots. 
Let $S$ be the connected closed subgroup of $AN$ with Lie algebra $\g{s} = \g{s} \oplus (\g{n} \ominus V)$, where $V \subseteq \g{g}_{\alpha_0} \oplus \g{g}_{\alpha_1}$ has non-trivial projection onto $\g{g}_{\alpha_k}$ for $k \in \{0,1\}$. Then $S \cdot o$ cannot be a CPC submanifold of $M$. 
\end{proposition}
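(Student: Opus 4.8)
The plan is to fix the convention that $\alpha_0$ is the short and $\alpha_1$ the long simple root, so that in the $G_2$ root system one has $A_{\alpha_0,\alpha_1}=-3$, $A_{\alpha_1,\alpha_0}=-1$ and $|\alpha_1|^2=3|\alpha_0|^2$, with positive roots $\alpha_0,\alpha_1,\alpha_0+\alpha_1,2\alpha_0+\alpha_1,3\alpha_0+\alpha_1,3\alpha_0+2\alpha_1$. By hypothesis both $V_{\alpha_0}$ and $V_{\alpha_1}$ are non-trivial, so I may fix unit vectors $\xi_0\in V_{\alpha_0}$ and $\eta\in V_{\alpha_1}$. The entire argument reduces to showing that the largest principal curvature of $\Ss_{\xi_0}$ is strictly larger than the largest principal curvature of $\Ss_{\eta}$; since the CPC property would force $\Ss_{\xi_0}$ and $\Ss_{\eta}$ to have exactly the same eigenvalues, this gives the contradiction.

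I would first control the long normal direction. For a unit $\xi_1\in V_{\alpha_1}$ the shape operator $\Ss_{\xi_1}$ preserves each $\alpha_1$-string invariant subspace $\bigoplus_{n}\g{g}_{\lambda+n\alpha_1}^{\top}$, and since $\alpha_1$ is a longest root, Proposition~\ref{p2.48}(\ref{pr:short}) gives $A_{\alpha_1,\lambda}\in\{0,\pm1\}$ for every root $\lambda$ not proportional to $\alpha_1$; hence every $\alpha_1$-string has length at most $2$. Thus each non-trivial string falls under Proposition~\ref{proposition:principalcurvatures}(\ref{proposition:principalcurvatures:i}) and contributes only the eigenvalues $0,\pm\tfrac{|\alpha_1|}{2}$, while length-one strings contribute $0$. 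Consequently every principal curvature of $\Ss_{\xi_1}$ lies in $\{0,\pm\tfrac{|\alpha_1|}{2}\}$, so its operator norm is exactly $\tfrac{|\alpha_1|}{2}=\tfrac{\sqrt3}{2}|\alpha_0|$.

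The heart of the proof is the short normal direction, where the relevant $\alpha_0$-string through $\alpha_1$ has length $4$ (because $A_{\alpha_0,\alpha_1}=-3$) and therefore lies outside the scope of Proposition~\ref{proposition:principalcurvatures}. I would set $Z_1=[\xi_0,\eta]\in\g{g}_{\alpha_0+\alpha_1}$, $Z_2=[\xi_0,Z_1]\in\g{g}_{2\alpha_0+\alpha_1}$ and $Z_3=[\xi_0,Z_2]\in\g{g}_{3\alpha_0+\alpha_1}$, noting $[\xi_0,Z_3]=0$ since $4\alpha_0+\alpha_1\notin\Delta$. The key observation is that the first down-step is killed by the projection onto $\g{s}$: by Lemma~\ref{lemma:ad}(\ref{lemma:ad:ii}) one has $[\theta\xi_0,Z_1]=A_{\alpha_0,\alpha_1}|\alpha_0|^2\eta\in\mathbb{R}\eta\subseteq V_{\alpha_1}$, which is a \emph{normal} vector and hence projects to $0$. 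Using the shape-operator formula (\ref{shape}) together with Lemma~\ref{lemma:ad}(\ref{lemma:ad:iii}),(\ref{lemma:ad:iv}), which give $[\theta\xi_0,Z_2]=-4|\alpha_0|^2Z_1$ and $[\theta\xi_0,Z_3]=-3|\alpha_0|^2Z_2$, the subspace $\spann\{Z_1,Z_2,Z_3\}$ turns out to be $\Ss_{\xi_0}$-invariant and $3$-dimensional (rather than $4$-dimensional). After normalising $Z_1,Z_2,Z_3$, the matrix of $\Ss_{\xi_0}$ on this block becomes the tridiagonal matrix
\begin{equation*}
\begin{pmatrix} 0 & |\alpha_0| & 0\\ |\alpha_0| & 0 & \tfrac{\sqrt3}{2}|\alpha_0|\\ 0 & \tfrac{\sqrt3}{2}|\alpha_0| & 0\end{pmatrix},
\end{equation*}
whose eigenvalues are $0$ and $\pm\tfrac{\sqrt7}{2}|\alpha_0|$. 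Note that this block uses only the single vector $\eta$, so no assumption on root-space dimensions is needed and the argument covers both $G_2^2/SO_4$ and $G_2^{\mathbb{C}}/G_2$ simultaneously.

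Putting the two computations together concludes the proof: $\tfrac{\sqrt7}{2}|\alpha_0|$ is a principal curvature of $\Ss_{\xi_0}$, but $\tfrac{\sqrt7}{2}|\alpha_0|>\tfrac{\sqrt3}{2}|\alpha_0|=\tfrac{|\alpha_1|}{2}$ exceeds every principal curvature of $\Ss_{\xi_1}$, so the two shape operators cannot share their eigenvalues and $S\cdot o$ is not CPC. I expect the only delicate point to be the bookkeeping in the short direction: one must identify precisely which part of the length-$4$ string survives the orthogonal projection onto $\g{s}$. The clean way to see this is the remark above, namely that the single bracket $[\theta\xi_0,[\xi_0,\eta]]$ lands back in $\mathbb{R}\eta\subseteq V$ and is therefore removed; this truncates the expected $4\times4$ block to the $3\times3$ block above and is exactly what forces the eigenvalue $\tfrac{\sqrt7}{2}|\alpha_0|$ to appear, instead of a value matching the long-root side.
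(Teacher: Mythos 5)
Your proposal is correct and takes essentially the same route as the paper's proof: you compute the shape operator in the short-root normal direction on the $3$-dimensional span of $\ad(\xi_{\mathrm{short}})^{k}\xi_{\mathrm{long}}$, $k=1,2,3$ (using Lemma~\ref{lemma:ad} and the fact that the bottom bracket $[\theta\xi_{\mathrm{short}},[\xi_{\mathrm{short}},\xi_{\mathrm{long}}]]$ lies in $V$ and is killed by the tangential projection), obtain the eigenvalues $0,\pm\tfrac{\sqrt{7}}{2}|\alpha_{\mathrm{short}}|$, and contrast them with the long-root direction, whose spectrum is contained in $\{0,\pm\tfrac{1}{2}|\alpha_{\mathrm{long}}|\}$ by Proposition~\ref{p2.48}(\ref{pr:short}) and Proposition~\ref{proposition:principalcurvatures}(\ref{proposition:principalcurvatures:i}). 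The only cosmetic differences are the swapped labelling of the two simple roots and your orthonormalization of the basis, which replaces the paper's non-symmetric $3\times 3$ block by the equivalent symmetric tridiagonal matrix with the same eigenvalues.
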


\begin{proof}
	We can assume $|\alpha_0| > |\alpha_1|$ and hence $|\alpha_0|^2 = 6$ and $|\alpha_1|^2 = 2$. The $\alpha_1$-string of $\alpha_0$ consists of $\alpha_0,\alpha_0 + \alpha_1,\alpha_0 + 2 \alpha_1,\alpha_0 + 3 \alpha_1$ and we have $A_{\alpha_1, \alpha_0} = -3$. Let $\xi_k \in V_{\alpha_k}$ be a unit vector and $k \in \{0,1\}$. We will determine a principal curvature of the shape operator $\Ss_{\xi_1}$ that cannot be a principal curvature of the shape operator $\Ss_{\xi_0}$. Note that $[\xi_1, \xi_0] \in \g{g}_{\alpha_0 + \alpha_1}$ is tangent to $S \cdot o$. Using (\ref{shape}) and Lemma~\ref{lemma:ad}(\ref{lemma:ad:ii}) we deduce 
	\begin{align*}
	2\Ss_{\xi_1} [\xi_1, \xi_0] & = -([[\xi_1, \xi_0], \xi_1] - [[\xi_1, \xi_0], \theta \xi_1])^{\top} \\
	& =   [\xi_1, [\xi_1, \xi_0]]^{\top} - [\theta \xi_1, [\xi_1, \xi_0]]^{\top}  \\
	& =  [\xi_1, [\xi_1, \xi_0]] - A_{\alpha_1, \alpha_0} |\alpha_1|^{2} \xi^{\top}_0 =   [\xi_1, [\xi_1, \xi_0]].
	\end{align*}
	Note that $A_{\alpha_1, \alpha_0 + \alpha_1} = -1$. From (\ref{shape}) 
and Lemma~\ref{lemma:ad}(\ref{lemma:ad:iii}), we obtain 
	\begin{align*}
	2\Ss_{\xi_1} [\xi_1, [\xi_1, \xi_0]]  
	&= - ([[\xi_1, [\xi_1, \xi_0]], \xi_1] - [[\xi_1, [\xi_1, \xi_0]], \theta \xi_1])^{\top}\\
	& =   [\xi_1, [\xi_1, [\xi_1, \xi_0]]]^{\top} -  [\theta \xi_1, [\xi_1, [\xi_1, \xi_0]]]^{\top}  
	\\& =  [\xi_1, [\xi_1, [\xi_1, \xi_0]]] + 8 [\xi_1, \xi_0].
	\end{align*}
	Finally, since $A_{\alpha_1, \alpha_0 + 2 \alpha_1} = 1$, from (\ref{shape}) and Lemma~\ref{lemma:ad}(\ref{lemma:ad:iv}) we conclude
	\begin{align*}
	2\Ss_{\xi_1} [\xi_1, [\xi_1, [\xi_1, \xi_0]]]  & = - [[\xi_1, [\xi_1, [\xi_1, \xi_0]]], \xi_1] + [[\xi_1, [\xi_1, [\xi_1, [\xi_1, \xi_0]]]], \theta \xi_1]^{\top}\\
	&  =  [\xi_1, [\xi_1, [\xi_1, [\xi_1, \xi_0]]]]^{\top} - [\theta \xi_1, [\xi_1, [\xi_1, [\xi_1, [\xi_1, \xi_0]]]]^{\top} \\&  = 6 [\xi_1, [\xi_1, \xi_0]]].
	\end{align*}
	Therefore, the $3$-dimensional vector space spanned by the three vectors 
$\ad(\xi_1)\xi_0$, $\ad^{2}(\xi_1)\xi_0$ and $\ad^{3}(\xi_1)\xi_0$ is $\Ss_{\xi_1}$-invariant. The corresponding matrix representation of $\Ss_{\xi_1}$ on that subspace is
	\[
	\left(
	\begin{array}{ccc}
	0 & 4 & 0 \\
	\frac{1}{2} & 0 & 3  \\
	0 & \frac{1}{2} & 0 \\
	\end{array}
	\right).
	\]
	The principal curvatures of $\Ss_{\xi_1}$ on this subspace are $\pm \sqrt{7/2}$ and $0$. If $S \cdot o$ is a CPC submanifold, then $\sqrt{7/2}$ must also be a principal curvature of the shape operator $\Ss_{\xi_0}$. However, since $|\alpha_0| > |\alpha_1|$, we deduce from Proposition~\ref{p2.48} that $|A_{\alpha_0, \mu}| \leq 1$ for all $\mu \in \Delta$. According to Proposition~\ref{proposition:principalcurvatures}(\ref{proposition:principalcurvatures:i}) all the non-trivial principal curvatures of $\Ss_{\xi_0}$ are $\pm \sqrt{3/2}$.  Therefore $S \cdot o$ cannot be a CPC submanifold.
\end{proof}

We now prove a similar result for symmetric spaces of non-compact type whose Dynkin diagram is not of type $G_2$.

\begin{proposition}\label{proposition:classification:lenght}
	Let $M = G/K$ be a symmetric space of non-compact type whose Dynkin diagram is not of type $G_2$. Let $S$ be the connected closed subgroup of $AN$ whose Lie algebra is $\g{s} = \g{s} \oplus (\g{n} \ominus V)$, where $V = \bigoplus_{\alpha \in \psi} V_{\alpha}$. If $S \cdot o$ is a CPC 
submanifold of $M$, then all roots in $\psi$ must have the same length.
\end{proposition}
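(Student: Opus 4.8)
The plan is to argue by contradiction, so suppose that $S \cdot o$ is a CPC submanifold while $\psi$ contains two roots of different length. Since the Dynkin diagram is irreducible, not of type $G_2$ (that case being Proposition~\ref{proposition:G2}), and has at most two root lengths, the squared ratio of a long to a short root length equals $2$; hence the two roots of different length in $\psi$ are a short root $\alpha$ and a long root $\beta$ with $|\beta|^2=2|\alpha|^2$, so that $|\beta|/2=|\alpha|/\sqrt{2}$. I would then compare the spectra of the shape operators $\Ss_\xi$ for unit normals $\xi\in V_\alpha$ and $\xi\in V_\beta$ by means of Proposition~\ref{proposition:principalcurvatures}.

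First I would determine the spectrum for the long root. As $\beta$ is a longest root, Proposition~\ref{p2.48}(\ref{pr:short}) gives $A_{\beta,\gamma}\in\{0,\pm1\}$ for every $\gamma$ non-proportional to $\beta$; thus only strings with $A_{\beta,\gamma}=-1$ contribute, and Proposition~\ref{proposition:principalcurvatures}(\ref{proposition:principalcurvatures:i}) shows that the only nonzero principal curvatures of $\Ss_\xi$, $\xi\in V_\beta$, are $\pm|\beta|/2=\pm|\alpha|/\sqrt{2}$.

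Next I would examine the short root. By Proposition~\ref{proposition:principalcurvatures}, the nonzero principal curvatures of $\Ss_\xi$, $\xi\in V_\alpha$, can only be $\pm|\alpha|$ (from a string with $A_{\alpha,\gamma}=-2$, which by integrality of the Cartan integers forces $\gamma$ long), $\pm|\alpha|/\sqrt{2}$, or $\pm|\alpha|/2$ (from a string with $A_{\alpha,\gamma}=-1$, which forces $\gamma$ short). As both $|\alpha|$ and $|\alpha|/2$ differ from $|\alpha|/\sqrt{2}$ and from $0$, the CPC hypothesis forces $\pm|\alpha|$ and $\pm|\alpha|/2$ to be absent from the spectrum of $\Ss_\xi$. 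By Proposition~\ref{proposition:principalcurvatures} this is only possible if every root $\gamma$ of minimum level in its $\alpha$-string with $A_{\alpha,\gamma}\in\{-1,-2\}$ satisfies $\g{g}_\gamma^\top=\{0\}$, i.e.\ is a simple root with $V_\gamma=\g{g}_\gamma$. Whenever the ambient root system has rank at least three one can exhibit such a $\gamma$ that is \emph{non}-simple (a long root at angle $3\pi/4$, or a short root at angle $2\pi/3$, with $\alpha$), and then $\g{g}_\gamma^\top=\g{g}_\gamma\neq\{0\}$ produces one of the forbidden curvatures $\pm|\alpha|$ or $\pm|\alpha|/2$, a contradiction.

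The remaining, and main, obstacle is the case in which the whole root system has rank two, necessarily of type $B_2$. There the only root making angle $3\pi/4$ with the short simple root $\alpha$ is the long simple root $\beta$, and there is no short root making angle $2\pi/3$ with $\alpha$; the argument above collapses precisely when $V_\beta=\g{g}_\beta$, for otherwise $\g{g}_\beta^\top\neq\{0\}$ would force $\pm|\alpha|$. When $V_\beta=\g{g}_\beta$, a direct count shows that the spectra of $\Ss_\xi$ for $\xi\in V_\alpha$ and for $\xi\in V_\beta$ can be made to agree by a suitable choice of $\dim V_\alpha$, so a single normal direction does not suffice. To finish I would compute the shape operator $\Ss_{\cos\varphi\,\xi+\sin\varphi\,\zeta}$ of a general unit normal, with $\xi\in V_\alpha$ and $\zeta\in V_\beta$ unit vectors, exactly as in the $A_2$ analysis of Section~\ref{construction:examples}: restricting to the $\Ss$-invariant subspace associated with the $(\alpha,\beta)$-string of $\beta$ and diagonalizing, the characteristic polynomial acquires a term proportional to $\cos^2\varphi\,\sin^2\varphi\,\sin^2\psi_0$, where $\psi_0$ is the angle inside $\g{g}_{\alpha+\beta}$ between the images $\phi_\xi(\g{g}_\beta)$ and $\phi_\zeta(\g{g}_\alpha\ominus V_\alpha)$. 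Hence the eigenvalues genuinely depend on $\varphi$ unless $\psi_0=0$ for all choices of $\xi,\zeta$, an identity that can be excluded directly; this $\varphi$-dependence contradicts the CPC property and completes the proof.
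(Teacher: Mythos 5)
Your argument up to the rank-two reduction is sound and is essentially the paper's own mechanism: comparing the spectra of $\Ss_\xi$ for $\xi \in V_\alpha$ and $\xi \in V_\beta$ via Proposition~\ref{proposition:principalcurvatures} and concluding that every minimum-level root $\gamma$ of a non-trivial $\alpha$-string must satisfy $\g{g}_\gamma^\top = \{0\}$, hence be a simple root with $V_\gamma = \g{g}_\gamma$ (the paper only exploits the $A_{\alpha,\gamma}=-2$ string; your additional use of the $A_{\alpha,\gamma}=-1$ strings is correct and helpful). Your treatment of rank $\geq 3$ is a genuinely different route from the paper's: you kill these cases by exhibiting a non-simple minimum-level root, whereas the paper instead compares multiplicities on the invariant subspace of the $(\alpha_1,\alpha_2)$-string and invokes the multiplicity tables of \cite{Jurgen} to force the ambient space to be $SO^o_{r,r+n}$. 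Your claim is true, but you assert it; it requires a short case check ($B_r$: $\gamma = e_1-e_r$; $C_r$: $\gamma = 2e_{i+1}$, or $\gamma = e_1+e_r$ when $\alpha = e_{r-1}-e_r$; $F_4$: $\gamma = e_1-e_4$, resp.\ $\gamma = e_2+e_3$).

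The genuine gap is the final rank-two step, which is where the real work of the proposition lies. First, your dimension count forces $\dim(\g{g}_\alpha \ominus V_\alpha) = \dim \g{g}_\beta$ but \emph{not} that this common dimension is $1$: in $Sp_{2,2}/Sp_2Sp_2$ the short and long multiplicities are $4$ and $3$, so this case survives all your earlier filters (with $\dim V_\alpha = 1$, $V_\beta = \g{g}_\beta$) and the two image subspaces $\phi_\xi(\g{g}_\beta)$ and $\phi_\zeta(\g{g}_\alpha\ominus V_\alpha)$ are $3$-dimensional; then ``the angle $\psi_0$'' is not a well-defined scalar, and the asserted $\cos^2\varphi\,\sin^2\varphi\,\sin^2\psi_0$ term in the characteristic polynomial is neither meaningful nor derived. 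Second, under any principal-angle reading, the dichotomy ``eigenvalues depend on $\varphi$ unless $\psi_0=0$'' is wrong: the two subspaces can intersect non-trivially (smallest principal angle $0$) without being equal, and the eigenvalues still depend on $\varphi$; what $\varphi$-independence actually forces is the \emph{equality} of the two subspaces. Third, the exclusion of the degenerate case is exactly the missing identity, not something that can be waved through. All three issues are repaired by the relation $\phi_\xi(\zeta) = -\phi_\zeta(\xi)$, which follows from \eqref{definition:phi} using $|\beta|=\sqrt{2}\,|\alpha|$, $A_{\alpha,\beta}=-2$, $A_{\beta,\alpha}=-1$, and which you never bring in. Indeed, it gives $\phi_\xi(\zeta) \in \phi_\zeta(V_\alpha)$, so $\Ss_\zeta \phi_\xi(\zeta) = -\tfrac{|\beta|}{2}\,\xi^\top = 0$ by Proposition~\ref{proposition:main1}, while $\Ss_\zeta \phi_\xi^2(\zeta) = 0$ by \eqref{shape} and \eqref{bracket:relation} (the $\beta$-string of $2\alpha+\beta$ is trivial), and Proposition~\ref{proposition:main2} gives $\Ss_\xi \phi_\xi(\zeta) = \tfrac{|\alpha|}{\sqrt{2}}\,\phi_\xi^2(\zeta)$ and $\Ss_\xi \phi_\xi^2(\zeta) = \tfrac{|\alpha|}{\sqrt{2}}\,\phi_\xi(\zeta)$. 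Hence the plane spanned by $\phi_\xi(\zeta)$ and $\phi_\xi^2(\zeta)$ is invariant under $\Ss_{\cos\varphi\,\xi + \sin\varphi\,\zeta}$ with eigenvalues $\pm\tfrac{|\alpha|}{\sqrt{2}}\cos\varphi$, which vary with $\varphi$ --- a contradiction valid for arbitrary multiplicities (and note this also disposes of $Sp_{2,2}$, which the paper's table reduction to $SO^o_{r,r+n}$ does not visibly cover). Some argument of this kind --- or the paper's alternative of forcing $\dim\g{g}_\beta = 1$ and then computing explicitly --- must be supplied; as written, your decisive step is an unproven and, as formulated, incorrect claim.
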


\begin{proof}
	Assume that there are two roots $\alpha_0,\alpha_1 \in \psi$ with different length and that $|\alpha_0| > |\alpha_1|$. Then we have $|\alpha_0| = 
\sqrt{2} |\alpha_1|$ and there exists $\lambda \in \Delta^{+}$ with $A_{\alpha_1, \lambda} = -2$. Then $\lambda$ is the root of minimum level in 
its non-trivial $\alpha_1$-string, which consists of $\lambda,\lambda + \alpha_1,\lambda + 2\alpha_1$. Let $\xi_1 \in V_{\alpha_1}$ be a unit vector. Consider the restriction of the shape operator $\Ss_{\xi_1}$ to the tangent projection of the root spaces of the $\alpha_1$-string of $\lambda$. From Proposition~\ref{proposition:principalcurvatures}(\ref{proposition:principalcurvatures:ii}) we see that the non-zero principal curvatures of $\Ss_{\xi_1}$ are $\pm |\alpha_1|$, both with multiplicity $\dim(\g{g}_{\lambda}^{\top})$, and $\pm |\alpha_1|/\sqrt{2}$, both with multiplicity $\dim(V_{\lambda})$. In particular, the submanifold $S \cdot o$ is not totally geodesic. There exists  $\gamma \in \Delta^+$ such that its $\alpha_0$-string is non-trivial, because otherwise the shape operator $\Ss_{\xi_0}$ with respect to a unit vector $\xi_0 \in V_{\alpha_0}$ vanishes, which contradicts that $S \cdot o$ is a CPC submanifold. Without loss of generality we can assume that $\gamma$ is the root of minimum level in its 
$\alpha_0$-string. Since $\alpha_0$ is a long root, Proposition~\ref{p2.48} implies $A_{\alpha_0, \gamma} = -1$. From Proposition~\ref{proposition:principalcurvatures}(\ref{proposition:principalcurvatures:i}) we see that the non-zero principal curvatures of $\Ss_{\xi_0}$ are $\pm |\alpha_0|/2$, both with multiplicity $\dim(\g{g}_{\gamma}^{\top})$. But $|\alpha_1| \neq |\alpha_0|/2$. Since $S \cdot o$ is a CPC submanifold, it follows 
that $|\alpha_1|$ cannot be a principal curvature and hence $\dim(\g{g}_{\lambda}^{\top}) = 0$. In other words, $V_{\lambda} = \g{g}_{\lambda}$ and $\lambda$ is a simple root connected to $\alpha_1$ by a single edge. 
	
	We put $\alpha_2 = \lambda$ and define the normal vector $\xi = \cos(\varphi) \xi_1 + \sin(\varphi) \xi_2$, where $\xi_k \in V_{\alpha_k}$ for $k \in \{1,2\}$. Note that $\alpha_1,\alpha_2$ generate a root system of  type $B_2$ ($= C_2$). Therefore, according to (\ref{shape}) and (\ref{bracket:relation}), the vector space 
	\[
	\g{g}_{\alpha_1}^{\top} \oplus \g{g}_{\alpha_2}^{\top} \oplus \g{g}_{\alpha_1 + \alpha_2}^{\top}  \oplus  \g{g}_{2\alpha_1 + \alpha_2}^{\top} = 
 \g{g}_{\alpha_1}^{\top} \oplus \g{g}_{\alpha_1 + \alpha_2}  \oplus  \g{g}_{2\alpha_1 + \alpha_2}
	\]
	is $\Ss_{\xi}$-invariant. We will now investigate the shape operator $\Ss_{\xi}$ on this subspace. In fact, studying the principal curvatures of $\Ss_{\xi}$ when restricted to this subspace is equivalent to studying the principal curvatures of $S \cdot o$ as a submanifold of a rank $2$ symmetric space whose Dynkin diagram is of type $B_2$. First note that the $\alpha_2$-string containing $\alpha_1$ consists of $\alpha_1,\alpha_1 + \alpha_2$ and the $\alpha_1$-string containing $\alpha_2$ consists of $\alpha_2,\alpha_2 + \alpha_1,\alpha_2 + 2 \alpha_1$. We will use Proposition \ref{proposition:principalcurvatures} for both cases. On the one hand, the non-zero principal curvatures of $\Ss_{\xi_2}$ are $\pm |\alpha_2|/2$, both with multiplicity $\dim(\g{g}_{\alpha_1}^{\top})$. On the other hand, since $\g{g}_{\alpha_2} = V_{\alpha_2}$, the non-zero principal curvatures of $\Ss_{\xi_1}$ are $\pm |\alpha_1|/\sqrt{2}$, both with multiplicity $\dim(\g{g}_{\alpha_2}) = \dim(V_{\alpha_2})$. This implies that $\dim(\g{g}_{\alpha_2}) = \dim(\g{g}_{\alpha_1}^{\top})$ is a necessary condition for $S \cdot o$ to be a CPC submanifold. Since $V_{\alpha_1} \neq \{0\}$ by assumption, we get $\dim(\g{g}_{\alpha_1}) > \dim(\g{g}_{\alpha_2})$. This means, according to \cite[p.~337]{Jurgen}, that $S \cdot o$ 
must be contained in the symmetric space $SO^{o}_{r,r+n}/SO_{r} SO_{r+n}$, where $\dim(\g{g}_{\alpha_2}) = 1$ and $\dim(\g{g}_{\alpha_1}) = n$. Since $\dim(\g{g}_{\alpha_2}) = \dim(\g{g}_{\alpha_1}^{\top})$, $V_{\alpha_1}$ must be an $(n-1)$-dimensional subspace of $\g{g}_{\alpha_1}$. Let $\xi_2 \in V_{\alpha_2}$ and $X \in \g{g}_{\alpha_1}^{\top}$. From (\ref{definition:shape:operator}) and Proposition~\ref{proposition:main1} we deduce
	\[
	\Ss_{\xi_{2}} X = \frac{|\alpha_2|}{2} \phi_{\xi_2}(X) \  \mbox{  and  
} \ 
	\Ss_{\xi_{2}} \phi_{\xi_2}(X) = \frac{|\alpha_2|}{2} X.
	\]
	Now consider $\xi_1 \in V_{\alpha_1}$. Since $\phi^{2}_{\xi_1}  \rvert_{\g{g}_{\alpha_2}} \colon \g{g}_{\alpha_2} \to \g{g}_{2\alpha_1 +  \alpha_2}$ is a linear isometry, we have $\g{g}_{\alpha_2 + 2 \alpha_1} = \mathbb{R} \phi^{2}_{\xi_1}(\xi_2)$. Recall that $\phi_{\xi_2} \rvert_{\g{g}_{\alpha_1}} \colon \g{g}_{\alpha_1} \to \g{g}_{\alpha_1+ \alpha_2}$ is also linear isometry. Then, using (\ref{cartan:inner}) and combining definition (\ref{definition:phi}) together with Lemma~\ref{lemma:ad}(\ref{lemma:ad:ii}),(\ref{lemma:ad:i}), we obtain 
	\begin{align*}
	\langle (\phi_{\xi_1} \circ \phi_{\xi_2})(X), \phi^{2}_{\xi_1}(\xi_2)  \rangle_{AN} & = \langle \phi_{\xi_2}(X), \phi_{\xi_1}(\xi_2) \rangle_{AN} = - \langle \phi_{\xi_2}(X), \phi_{\xi_2}(\xi_1) \rangle_{AN} \\ &  = \langle X, \xi_1 \rangle_{AN} = 0.
	\end{align*}
	Therefore, using (\ref{definition:shape:operator}) and Proposition~\ref{proposition:main1}, we obtain  
	\[
	\Ss_{\xi_1} \phi_{\xi_2}(X) = \frac{|\alpha_1|}{\sqrt{2}}(\phi_{\xi_1} 
\circ \phi_{\xi_2} (X))^{\top} = 0.
	\]
	Since $\xi = \cos(\varphi) \xi_1 + \sin(\varphi) \xi_2$ with $\xi_k \in V_{\alpha_k}$ and $k \in \{1,2\}$, we deduce
	\[
	\Ss_{\xi}(X + \phi_{\xi_2}(X)) = \sin(\varphi) \frac{|\alpha_2|}{2}(X + \phi_{\xi_2}(X)),
	\]
	which shows that $S \cdot o$ cannot be a CPC submanifold.
\end{proof}

In order to finish this section, we just need to prove that $S \cdot o$ is not a CPC submanifold whenever there are at least two orthogonal roots in $\psi$. One of the consequences of \cite{BDT10} is that $S \cdot o$ is 
not a CPC submanifold when $\psi$ has exactly two orthogonal simple roots. The next result settles the general case. 

\begin{proposition}
	Let $\g{s} = \g{a} \oplus (\g{n} \ominus V)$ be a subalgebra of $\g{a} 
\oplus \g{n}$, for $V = \oplus_{\alpha \in \psi}\ V_{\alpha}$ and $\psi 
\subset \Pi'$. Assume that there are two orthogonal roots $\alpha_0,\alpha_1 \in \psi$. Let $S$ be the connected closed subgroup of $AN$ with Lie algebra $\g{s}$. Then the submanifold $S \cdot o$ is not a CPC submanifold.
\end{proposition}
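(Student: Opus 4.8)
The plan is to assume that $S \cdot o$ is a CPC submanifold and reach a contradiction by producing an $\Ss_\xi$-invariant subspace on which the eigenvalues of $\Ss_\xi$ genuinely depend on $\xi$. First I would record the reductions that rigidify the situation. Since $\alpha_0,\alpha_1$ are orthogonal simple roots, the Dynkin diagram has two non-adjacent nodes, so $M$ has rank $\geq 3$ and is not of type $G_2$; hence Proposition~\ref{proposition:classification:lenght} applies, and if $|\alpha_0|\neq|\alpha_1|$ it already shows $S\cdot o$ is not CPC. So I may assume $|\alpha_0|=|\alpha_1|$, normalised so that $|\alpha_0|^2=|\alpha_1|^2=2$. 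Orthogonality also gives $\alpha_0\pm\alpha_1\notin\Delta$ (a difference of simple roots is never a root, and the $\alpha_0$-string through $\alpha_1$ then has $p=q=0$), so in particular $[\g{g}_{\alpha_0},\g{g}_{\alpha_1}]=\{0\}$. Finally, since $M$ is irreducible its diagram is connected, and two equal-length simple roots are joined by a path $\alpha_0=\delta_0,\delta_1,\dots,\delta_m=\alpha_1$ of simple roots of that same length (in each irreducible non-$G_2$ type the roots of fixed length span a connected simply-laced subdiagram). Setting $\gamma=\delta_1+\cdots+\delta_{m-1}$, the type-$A$ structure of the path shows that $\gamma,\gamma+\alpha_0,\gamma+\alpha_1,\gamma+\alpha_0+\alpha_1$ are all roots, that $A_{\alpha_0,\gamma}=A_{\alpha_1,\gamma}=-1$, and that $\gamma$ has minimal level in both strings; consequently the $(\alpha_0,\alpha_1)$-string class is exactly $[\gamma]=\{\gamma,\gamma+\alpha_0,\gamma+\alpha_1,\gamma+\alpha_0+\alpha_1\}$.

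The heart of the argument is the shape operator on this invariant class-block. Fix unit vectors $\xi_0\in V_{\alpha_0}$, $\xi_1\in V_{\alpha_1}$ and put $\xi=\cos\varphi\,\xi_0+\sin\varphi\,\xi_1$. For $0\neq X\in\g{g}_\gamma^\top$ the relation $[\xi_0,\xi_1]=0$ makes $\phi_{\xi_0}\phi_{\xi_1}(X)=\phi_{\xi_1}\phi_{\xi_0}(X)$ well defined, and Proposition~\ref{proposition:main1} shows that the span of $X,\phi_{\xi_0}(X),\phi_{\xi_1}(X),\phi_{\xi_0}\phi_{\xi_1}(X)$ is $\Ss_\xi$-invariant, with $\sqrt2\,\Ss_\xi$ represented by
\[
\begin{pmatrix} 0 & \cos\varphi & \sin\varphi & 0 \\ \cos\varphi & 0 & 0 & \sin\varphi \\ \sin\varphi & 0 & 0 & \cos\varphi \\ 0 & \sin\varphi & \cos\varphi & 0 \end{pmatrix},
\]
whose eigenvalues are $\pm\tfrac{1}{\sqrt2}(\cos\varphi+\sin\varphi)$ and $\pm\tfrac{1}{\sqrt2}(\cos\varphi-\sin\varphi)$. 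Letting $X$ range over an orthonormal basis of $\g{g}_\gamma^\top$ and adjoining the three-dimensional, $\varphi$-independent sub-blocks generated by $V_\gamma$, I obtain an orthogonal $\Ss_\xi$-invariant decomposition of the whole class-block, valid for every $\xi\in\spann\{\xi_0,\xi_1\}$. Comparing $\varphi=0$ with $\varphi=\pi/4$, the multiplicity of the principal curvature $\tfrac{1}{\sqrt2}$ on this block drops from $2\dim\g{g}_\gamma^\top+\dim V_\gamma$ to $\dim V_\gamma$; since $[\gamma]$ is $\Ss_\xi$-invariant for all $\xi$ (see (\ref{eq:shapeclasses})) a continuity argument forces the block-spectrum to be $\xi$-independent under CPC, so this is a contradiction whenever $\g{g}_\gamma^\top\neq\{0\}$.

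It remains to treat the degenerate case $\g{g}_\gamma^\top=\{0\}$, which can occur only when $m=2$, i.e. $\gamma$ is a single simple root $\beta$ adjacent to both $\alpha_0$ and $\alpha_1$ with $V_\beta=\g{g}_\beta$. Here I would first apply Proposition~\ref{characterisation} through the reduction of Section~\ref{construction:examples} to the single-edge pairs $(\alpha_0,\beta)$ and $(\beta,\alpha_1)$: because $V_\beta$ is the whole root space, the necessary condition $\dim V_{\alpha_0}=\dim V_\beta=\dim\g{g}_\beta$ forces $V_{\alpha_0}=\g{g}_{\alpha_0}$, and likewise $V_{\alpha_1}=\g{g}_{\alpha_1}$. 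With all three root spaces removed, a direct computation from (\ref{shape}) shows that $\Ss_{\xi_\beta}$ vanishes identically on the class-block $\g{g}_{\beta+\alpha_0}\oplus\g{g}_{\beta+\alpha_1}\oplus\g{g}_{\beta+\alpha_0+\alpha_1}$ (each bracket lands either in a non-root or in $\g{g}_{\alpha_0}^\top=\g{g}_{\alpha_1}^\top=\{0\}$), whereas Proposition~\ref{proposition:main1} applied to the pair $(\beta+\alpha_1,\alpha_0)$ gives $\Ss_{\xi_0}$ the nonzero eigenvalues $\pm\tfrac{|\alpha_0|}{2}$ on the same block. As $\xi_\beta\in V_\beta$ and $\xi_0\in V_{\alpha_0}$ are both admissible unit normals and $[\beta]$ is invariant for both, this again violates CPC.

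I expect the main obstacle to be precisely this case analysis rather than any single computation: one must guarantee the bridging root $\gamma$ with its exact four-element class and the commuting diamond, verify that the clean sub-blocks persist even when $\gamma$ itself lies in $\psi$ but is only partially removed, and isolate the fully-removed degenerate branch, where the diamond collapses and one is forced to test the normal direction $\xi_\beta$ — available exactly because $\beta\in\psi$ — against $\xi_0$. The algebraic identities ($A_{\alpha_0,\gamma}=A_{\alpha_1,\gamma}=-1$, the commutativity $[\xi_0,\xi_1]=0$, and the vanishing of $\Ss_{\xi_\beta}$) are each routine consequences of the string results of Section~\ref{preliminaries}; the care lies in assembling them so that every admissible $V$ with two orthogonal roots in $\psi$ is covered.
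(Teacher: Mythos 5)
Your proof is correct, and its main line coincides with the paper's own proof: the same reduction to $|\alpha_0|=|\alpha_1|$ via Propositions \ref{proposition:G2} and \ref{proposition:classification:lenght}, the same bridging root $\gamma=\beta_1+\cdots+\beta_r$ along a type-$A$ chain of simple roots, and the same $4\times 4$ blocks whose eigenvalues $\pm\frac{|\alpha_0|}{2}(\cos\varphi\pm\sin\varphi)$ depend on $\varphi$, forcing $\g{g}_{\gamma}^{\top}=\{0\}$, hence $\gamma\in\Pi$ and $V_{\gamma}=\g{g}_{\gamma}$ (your bookkeeping of the $\varphi$-independent three-dimensional sub-blocks coming from $V_{\gamma}$ and of the multiplicities is in fact more careful than the paper's, which only records the $\g{g}_{\gamma}^{\top}$-blocks). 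The genuine difference is the degenerate case. The paper does not need to identify $V_{\alpha_0}$ and $V_{\alpha_1}$: it checks that $\phi_{\xi_{\gamma}}(\xi_0)+(\phi_{\xi_1}\circ\phi_{\xi_{\gamma}})(\xi_0)$ is annihilated by $\Ss_{\xi_{\gamma}}$ and is a $\frac{|\alpha_1|}{2}$-eigenvector of $\Ss_{\xi_1}$, so that for $\xi=\cos(\varphi)\,\xi_1+\sin(\varphi)\,\xi_{\gamma}$ it is an eigenvector with the $\varphi$-dependent eigenvalue $\cos(\varphi)\frac{|\alpha_1|}{2}$; this is short and self-contained. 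You instead first force $V_{\alpha_0}=\g{g}_{\alpha_0}$ and $V_{\alpha_1}=\g{g}_{\alpha_1}$ by running the necessity direction of Proposition \ref{characterisation} on the adjacent pairs $(\alpha_0,\beta)$ and $(\beta,\alpha_1)$, and then compare $\Ss_{\xi_{\beta}}\equiv 0$ with the $\pm\frac{|\alpha_0|}{2}$-eigenvalues of $\Ss_{\xi_0}$ on the block of $[\beta]$. This works: the root-string facts you invoke all hold, and the block is preserved by $\Ss_{\xi}$ along the arc from $\xi_{\beta}$ to $\xi_0$, so the continuity principle behind (\ref{eq:shapeclasses}) applies. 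Note only that it silently extends the rank-two reduction of Section \ref{construction:examples} (stated there for $\psi=\{\alpha_0,\alpha_1\}$) to a $\psi$ with at least three elements; the extension is harmless, since for normals in $V_{\alpha_0}\oplus V_{\beta}$ and tangent vectors in the $(\alpha_0,\beta)$-class no other component of $V$ can enter the relevant brackets, and the paper performs the same kind of extension in the proof of Proposition \ref{proposition:classification:lenght}. The trade-off: your route leans on the already-proved rank-two classification and pins down the degenerate configuration completely, while the paper's explicit eigenvector reaches the contradiction more economically and without any information on $V_{\alpha_0}$, $V_{\alpha_1}$.
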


\begin{proof}
	In view of Proposition~\ref{proposition:G2} and Proposition~\ref{proposition:classification:lenght} we can assume that all roots in $\psi$ have the same length. Taking into account the classification of Dynkin diagrams 
(see e.g.~\cite{K}), we deduce that there exist simple roots $\beta_1, \dots, \beta_r \in \Pi$ so that  $\alpha_0, \beta_1, \ldots, \beta_r, \alpha_1$ corresponds to a Dynkin diagram of type $A_{r+2}$. We define $\gamma 
= \sum_{i}^{r} \beta_i \in \Delta^+$. The $(\alpha_0, \alpha_1)$-string 
of $\gamma$ consists of $\gamma, \gamma + \alpha_0, \gamma + \alpha_1, \gamma + \alpha_0 + \alpha_1$. Let $\xi = \cos(\varphi) \xi_0 + \sin(\varphi) \xi_1$ be a unit normal vector with $\xi_k \in V_{\alpha_k}$ and $k \in \{0, 1\}$. Using (\ref{definition:shape:operator}) and Proposition \ref{proposition:main1}, we obtain that the non-trivial part of the matrix representation  $\Ss_{\xi}$ consists of $\dim(\g{g}_{\gamma}^{\top})$ blocks of the form
	\[
	\frac{|{\alpha_0}|}{2} 
	\left(\begin{array}{cccc} 0 & \cos(\varphi) & \sin(\varphi) & 0 \\  \cos(\varphi) & 0 & 0 & \sin(\varphi)  \\ \sin(\varphi) & 0 & 0 & \cos(\varphi)  \\ 0 & \sin(\varphi) & \cos(\varphi) & 0 
	\end{array}\right)
	\]
	with respect to $X, \phi_{\xi_0}(X), \phi_{\xi_1}(X), (\phi_{\xi_1} \circ \phi_{\xi_0})(X)$ for $X \in \g{g}_{\gamma}^{\top}$. The corresponding eigenvalues are $\pm \sqrt{1-\sin (2 \varphi )}$, both with multiplicity $2$. They clearly depend on $\varphi$, which cannot happen if $S \cdot o$ 
is a CPC submanifold. This implies $\g{g}_{\gamma} = V_{\gamma}$ and $\gamma=\beta_1 \in \Pi$. 
	
	Let $\xi_{\gamma} \in V_{\gamma}$ be a unit vector. Note that $\phi_{\xi_{\gamma}}(\xi_0) \in \g{g}_{\alpha_0 + \gamma}$ and that  $(\phi_{\xi_1} 
\circ \phi_{\xi_{\gamma}})(\xi_0) \in \g{g}_{\alpha_0 + \gamma +\alpha_1}$ are  tangent  to $S \cdot o$ at $o$. Using (\ref{definition:shape:operator}) and Proposition~\ref{proposition:main1}, we get  $2\Ss_{\xi_{\gamma}} \phi_{\xi_{\gamma}}(\xi_0) =  |\gamma| \xi^{\top}_0 = 0$ and 
	\[
	\Ss_{\xi_1} (\phi_{\xi_{\gamma}}(\xi_0) +(\phi_{\xi_1} \circ \phi_{\xi_{\gamma}})(\xi_0)) = \frac{|\alpha_1|}{2}(\phi_{\xi_{\gamma}}(\xi_0) +(\phi_{\xi_1} \circ \phi_{\xi_{\gamma}})(\xi_0)).
	\]
	Since $\alpha_0 + \alpha_1,\alpha_0 + 2 \gamma + \alpha_1 \notin \Delta$, using (\ref{shape}) together with (\ref{bracket:relation}) we deduce that $\Ss_{\xi_{\gamma}} (\phi_{\xi_1} \circ \phi_{\xi_{\gamma}})(\xi_0) = 
0$. Thus, if we define $\xi = \cos(\varphi) \xi_1 + \sin(\varphi)\xi_{\gamma}$, we get
	\[
	\Ss_{\xi} (\phi_{\xi_{\gamma}}(\xi_0) +(\phi_{\xi_1} \circ \phi_{\xi_{\gamma}})(\xi_0)) = \cos(\varphi) \frac{|\alpha_1|}{2}(\phi_{\xi_{\gamma}}(\xi_0) +(\phi_{\xi_1} \circ \phi_{\xi_{\gamma}})(\xi_0)).
	\]
	From this we see that $S \cdot o$ cannot be a CPC submanifold. This finishes the proof.
\end{proof}

\section{Description of the examples} \label{new:examples}

In this section we show that, with a few basic exceptions, the CPC submanifolds that we introduced in the Main Theorem are not singular orbits of cohomogeneity one actions.

Recall that $\alpha_0$ and $\alpha_1$ are two simple roots and $A_{\alpha_0, \alpha_1} = A_{\alpha_1, \alpha_0} = -1$. Recall also that $V$ is 
a subspace of $\g{g}_{\alpha_0} \oplus \g{g}_{\alpha_1}$ with non-trivial 
projections onto $\g{g}_{\alpha_0}$ and $\g{g}_{\alpha_1}$ (equivalently $V_0 \neq \{0\} \neq V_1$). We are studying the orbit $S \cdot o$, where $S$ is the connected closed subgroup of $AN$ with Lie algebra $\g{s} = \g{a} \oplus (\g{n} \ominus V)$. First, assume that  $V = \g{g}_{\alpha_0} \oplus \g{g}_{\alpha_1}$. Then $S \cdot o$ is one of the following submanifolds, or a canonical extension to $G/K$ of it:
\begin{itemize}
	\setlength{\itemsep}{1ex}
	\item[(i)] $\mathbb{R}H^2 \times \mathbb{R} \cong (SL_{2}(\mathbb{R})/SO_{2}) \times \mathbb{R}  \subset SL_{3}(\mathbb{R})/SO_{3}$,
	\item[(ii)] $\mathbb{R}H^3 \times \mathbb{R} \cong (SL_{2}(\mathbb{C})/SU_{2}) \times \mathbb{R}  \subset SL_{3}(\mathbb{C})/SU_{3}$,
	\item[(iii)] $\mathbb{R}H^5 \times \mathbb{R} \cong (SL_{2}(\mathbb{H})/Sp_{2}) \times \mathbb{R}   \subset SL_{3}(\mathbb{H})/Sp_{3}$,
	\item[(iv)]  $\mathbb{R}H^9 \times \mathbb{R} \subset E^{-26}_{6}/F_{4}$.
\end{itemize}
These four submanifolds  appear in the list \cite[Theorem 3.3]{BT04} of reflective submanifolds and are singular orbits of cohomogeneity one actions. Therefore, their canonical extensions are also singular orbits of cohomogeneity one actions. 

We will now see that the remaining submanifolds that we introduced in the 
Main Theorem do not admit such a description. One might study them in a rank 2 symmetric space and after that use some tools involving canonical extensions to conclude. However, for the sake of simplicity, we will carry 
out a direct study to avoid the introduction of these techniques.

Assume that $V_k$ is a proper subspace of $\g{g}_{\alpha_k}$ for $k \in \{0,1\}$ and that $\dim(\g{g}_{\alpha_0 + \alpha_1}) \geq 2$. We will assume that $S \cdot o$ is a singular orbit of a cohomogeneity one action and 
derive a contradiction. Up to now we used the Iwasawa decomposition to identify the tangent space $T_o(S \cdot o)$ of the orbit $S \cdot o$ at $o$ 
with $\g{s}$ and the normal space $\nu_o(S \cdot o)$ with $V$. However, in this section we will use the identification $\g{p} \cong T_o (G/K)$. This means that we will identify $T_o (S \cdot o)$ and $\nu_o (S \cdot o)$ with the orthogonal projections of $\g{s}$ and $V$ onto $\g{p}$, which are $(1- \theta) \g{s}$ and $(1- \theta) V$ respectively.

If $S \cdot o$ is the singular orbit of a cohomogeneity one action on $G/K$, then the normalizer $N_{K}(S \cdot o)$ of $S \cdot o$ in $K$ acts transitively on the unit sphere $\nu_o^1 (S \cdot o)$ in $\nu_o (S \cdot o)$. Let $\g{m}$ be the Lie algebra of $N_{K}(S \cdot o)$. Then we have $[\g{m}, \xi] = \nu_o (S \cdot o) \ominus \mathbb{R}\xi$ for each $\xi \in \nu_o^1 (S \cdot o)$. Let $\xi_0 \in V_0$ and $\xi_1 \in V_1$ be unit vectors. Taking into account that $\nu_o (S \cdot o) \cong (1- \theta) V$, 
there exists $Z \in \g{m}$ so that 
\begin{equation}\label{equation:transitive}
[Z, (1-\theta)\xi_0] = (1-\theta)\xi_1 \in \g{g}_{\alpha_1} \oplus \g{g}_{-\alpha_1}.
\end{equation}

Consider the orthogonal decomposition $\g{k} = \g{k}_0 \oplus \bigoplus_{\lambda \in \Delta^{+}} \g{k}_{\lambda}$ with $\g{k}_{\lambda} = \g{k} \cap (\g{g}_{\lambda} \oplus \g{g}_{-\lambda})$, and write $Z = Z_0 + 
\sum_{\lambda \in \Delta^{+}} Z_{\lambda}$ accordingly. On the one hand, we have 
\begin{equation}
\begin{aligned}\label{bracket:normalizer}
[Z_\lambda, (1-\theta) \xi_0] =(1-\theta) [Z_\lambda, \xi_0] \in \g{g}_{\lambda + \alpha_0} \oplus \g{g}_{-(\lambda + \alpha_0)} \oplus \g{g}_{ \lambda - \alpha_0}  \oplus \g{g}_{- (\lambda - \alpha_0)}
\end{aligned}
\end{equation}
for each $\lambda \in \Delta^{+}$. From (\ref{bracket:normalizer}) and (\ref{equation:transitive}), using $[\g{k}_0, \g{g}_{\lambda}] \subseteq \g{g}_{\lambda}$ for each $\lambda \in \Delta^{+}$, we deduce  $[Z_0, (1-\theta)\xi_0] = 0$. Thus, without loss of generality, we can assume that $Z_0 = 0$ and hence $Z =\sum_{\lambda \in \Delta^{+}} Z_{\lambda}$. From (\ref{bracket:normalizer}) and (\ref{equation:transitive}) we also see that $Z_{\alpha_0 + \alpha_1} \neq 0$. It is now easy to verify that
\[
N_{\g{k}}(T_o(S \cdot o)) = N_{\g{k}}(\nu_o(S \cdot o)) \subset \g{k}_0 
\oplus \g{k}_{\alpha_0 + \alpha_1} \oplus \left( \bigoplus_{\lambda \in \{\alpha_0, \alpha_1\}^{\perp}} \g{k}_{\lambda} \right),
\]
where $\{\alpha_0, \alpha_1\}^{\perp}$ denotes the set of positive roots that are orthogonal to both $\alpha_0$ and $\alpha_1$. Since $\g{m} \subset N_{\g{k}}(T_o(S \cdot o))$, we can thus write $Z =X + \theta X + \sum_{\lambda \in \{\alpha_0, \alpha_1\}^{\perp}} Z_{\lambda}$ with $0 \neq X \in \g{g}_{\alpha_0 + \alpha_1}$. Denote by $\g{l}$ the Lie algebra of $N_G (S \cdot o)$. It is clear that $\g{s} \subset \g{l}$ and $Z \in \g{l}$. Let $Y_1,\ldots,Y_q$ be an orthogonal basis of $\g{g}_{\alpha_0 + \alpha_1} \ominus \mathbb{R}X \subset \g{s}$, where $q = \dim(\g{g}_{\alpha_0 + \alpha_1})-1$. Note that the set $\{[Z, Y_i] = [\theta X,Y_i]: i = 1, \dots q \}$ generates a $q$-dimensional linear subspace $W$ of $\g{k}_0$, according Lemma~\ref{lemma:a,k0}(\ref{ak0}),(\ref{k0:elements}). Since $\g{l}$ is a subalgebra, we also have $W \subset \g{l}$ and therefore $W \subset N_{\g{k}}(T_o(S \cdot o))$. For $0 \neq \eta \in V_0$ we have
\[
[[Z, Y_i], (1-\theta) \eta]  = (1-\theta) [[Z, Y_i], \eta] = (1-\theta) [[\theta X, Y_i], \eta] \in (1-\theta)V,
\]
which is equivalent to $[[\theta X, Y_i], \eta] \in V_0$, $i \in \{1, \dots, q\}$. Note that we have  $[[\theta X, Y_i], \eta]=  [Y_i, \theta[\theta \eta, X]] \neq 0$ for all $i \in \{1, \dots, q \}$ by using twice Proposition~\ref{proposition:main1}, first for $[\theta \eta, X]$ and then for $[Y_i, \theta[\theta \eta, X]]$, taking into account that $\theta$ is 
an isomorphism of Lie algebras. Note also that $\langle [U, L], L \rangle_{B_{\theta}} = -\langle  L, [U,L] \rangle_{B_{\theta}}$ for all $U \in 
\g{k}_0$ and $L \in \g{n}$, which means that $[U, L]$ is orthogonal to $L$ for all $U \in \g{k}_0$ and $L \in \g{n}$. If $\dim(\g{g}_{\alpha_0 + \alpha_1}) =2$, then $V_0 = \mathbb{R} \eta$ is $1$-dimensional and $0 
\neq [[\eta, \theta X], Y_1]\in V_0$ is orthogonal to $\eta$, which is a contradiction. If $\dim(\g{g}_{\alpha_0 + \alpha_1}) > 2$, we have $0 \neq [[\theta X, Y_i], \eta] \in V_0$ for $i \in \{1, \dots q \}$. Since we have that $\dim(V_0) \leq \dim(T_0)$ by Proposition~\ref{characterisation} and Lemma~\ref{lemma:auxiliar:characterisation}(\ref{lemma:auxiliar:characterisation:iii}), these $q$ vectors must be linearly dependent. Thus
\[
0 = \sum_{i=1}^{q} a_i [[\theta X, Y_i], \eta] = \sum_{i=1}^{q} [[\theta X, a_i Y_i], \eta] =  [[\theta X, \sum_{i=1}^{q} a_i Y_i], \eta],
\]
which contradicts Proposition~\ref{proposition:main1} by the above argument. These contradictions come from the assumption that the action of $N_{K}(S \cdot o)$ on $\nu_o^1 (S \cdot o)$ is transitive. Therefore, if $V_k$ is a proper subset of $\g{g}_{\alpha_k}$ for $k \in \{0,1\}$, then the orbit $S \cdot o$ cannot be the singular orbit of a cohomogeneity one action.

\section{Further geometric explanations} \label{geometric explanations}

In this section we present a brief geometric context for some of the algebraic constructions in the previous sections.
Consider the inclusions
\[
SL_3(\R) \subset SL_3(\C) \subset SL_3(\HH) \subset E_6^{-26}.
\]
The maximal compact subgroup of $E_6^{-26}$ is $F_4$ and $E_6^{-26}/F_4$ is an exceptional Riemannian symmetric space of non-compact type whose root system is of type $A_2$. We have 
\[
SL_3(\R) \cap F_4 = SO_3\ ,\ SL_3(\C) \cap F_4 = SU_3\ ,\ SL_3(\HH) \cap F_4 = Sp_3.
\]
This leads to the totally geodesic embeddings
\[
SL_3(\R)/SO_3 \subset SL_3(\C)/SU_3 \subset SL_3(\HH)/Sp_3 \subset E_6^{-26}/F_4.
\]
The root system of these four Riemannian symmetric spaces $G/K$ is of type $A_2$ and the multiplicities of their roots are $1,2,4,8$, respectively. These dimensions correspond to the dimensions of the four normed real 
division algebras $\R,\C,\HH,\OO$. This suggests a close relation between 
these four symmetric spaces and normed real division algebras.

In fact, we have totally geodesic embeddings of the hyperbolic planes over these four normed real division algebras into these symmetric spaces:

\[
\begin{array}{ccccccc}
\R H^2 & \subset & \C H^2 & \subset & \HH H^2 & \subset & \OO H^2 \\[1ex]
\cap & & \cap & & \cap & & \cap \\[1ex]
SL_3(\R)/SO_3 & \subset & SL_3(\C)/SU_3 & \subset & SL_3(\HH)/Sp_3 & \subset & E_6^{-26}/F_4
\end{array}
\]
In each of the four cases, the totally geodesic submanifold $\FF H^2$ is reflective and hence there exists a totally geodesic  submanifold (which is also reflective) that is perpendicular to the hyperbolic plane. These are
\[
\begin{array}{ccccccc}
SL_3(\R)/SO_3 & \subset & SL_3(\C)/SU_3 & \subset & SL_3(\HH)/Sp_3 & \subset & E_6^{-26}/F_4 \\
\cup & & \cup & & \cup & & \cup \\
\R H^2 \times \R & \subset & \R H^3 \times \R & \subset & \R H^5 \times \R & \subset & \R H^9 \times \R \\
\perp & & \perp & & \perp & & \perp \\
\R H^2 & \subset & \C H^2 & \subset & \HH H^2 & \subset & \OO H^2 \\
\cap & & \cap & & \cap & & \cap \\
SL_3(\R)/SO_3 & \subset & SL_3(\C)/SU_3 & \subset & SL_3(\HH)/Sp_3 & \subset & E_6^{-26}/F_4
\end{array}
\]
The products $\R H^k \times \R$ are precisely our orbits $S \cdot o$ for the case when we remove $V = \g{g}_{\alpha_1} \oplus \g{g}_{\alpha_2}$. 
Thus the normal space $\nu_o(S \cdot o) \cong V$ of $S \cdot o$ at $o$ coincides with the tangent space $T_o\FF H^2$ of $\FF H^2$ at $o$ for a suitable $\FF H^2 \subset G/K$ and where $\FF$ is the corresponding division 
algebra. 

\medskip
Now suppose that $V$ is a proper subspace of $\g{g}_{\alpha_1} \oplus \g{g}_{\alpha_2} \cong T_o\FF H^2$.

If $\FF = \C$, then $V \cong \R \oplus \R \cong T_o\R H^2$ for a totally geodesic real hyperbolic plane $\R H^2 \subset \C H^2 \subset SL_3(\C)/SU_3$.

If $\FF = \HH$, then $V \cong \R \oplus \R \cong T_o\R H^2$ for a totally geodesic $\R H^2 \subset \HH H^2 \subset SL_3(\HH)/Sp_3$, or $V \cong \C \oplus \C \cong T_o\C H^2$ for a totally geodesic $\C H^2 \subset \HH H^2 \subset SL_3(\HH)/Sp_3$.

If $\FF = \OO$, then $V \cong \R \oplus \R \cong T_o\R H^2$ for a totally geodesic $\R H^2 \subset \OO H^2 \subset E_6^{-26}/F_4$, or $V \cong \C \oplus \C \cong T_o\C H^2$ for a totally geodesic complex hyperbolic plane $\C H^2 \subset \OO H^2 \subset E_6^{-26}/F_4$, or $V \cong \HH \oplus \HH \cong T_o\HH H^2$ for a totally geodesic $\HH H^2 \subset \OO H^2 \subset E_6^{-26}/F_4$.

In other words, this means that the totally geodesic hyperbolic planes in 
$G/K$ correspond to the subspaces $V$ that we can remove from $\g{g}_{\alpha_1} \oplus \g{g}_{\alpha_2}$ to obtain our CPC submanifolds.  

\medskip
The submanifolds $S \cdot o$ with $V$ strictly contained in $\g{g}_{\alpha_1} \oplus \g{g}_{\alpha_2}$ are some kind of ruled submanifolds. Here is a description for the simplest case when $G/K = SL_3(\C)/SU_3$ and $V 
\cong \R \oplus \R$. In this case we have the two reflective submanifolds 
$\R H^3 \times \R$ and $\C H^2$ which are perpendicular to each other at $o$. Consider the polar action on $\C H^2$ given in (ii)(d) in the Main Theorem of \cite{BD13}. The orbit of this polar action through $o$  is a Euclidean plane $\EE^2$, embedded in a horosphere of $\C H^2$ (equivalently, the $3$-dimensional Heisenberg group $N$) as a minimal surface. Perpendicular to $\EE^2$ at $o$ in $\C H^2$ is a totally geodesic $\R H^2 \subset \C H^2$. Moving this $\EE^2$ along $\R H^3 \times \R$ through the action on $\R H^3 \times \R$ by the solvable group $S'$ with $S' \cdot o = \R H^3 \times \R$ arising from the Iwasawa decomposition gives the orbit $S \cdot o$. Thus $S \cdot o$ is foliated by these Euclidean planes. The normal spaces are obtained by moving the totally geodesic $\R H^2$ perpendicular to $\EE^2$ in $\C H^2$ along $S \cdot o$. According to Proposition 3.4, the principal curvatures are $\pm 1/\sqrt{2}$  with multiplicity $1$ each and $0$ with multiplicity $4$. The $0$-eigenspace at $o$ is the tangent space at $o$ of the totally geodesic $\R H^3 \times \R$, and the other two eigenspaces arise from the non-totally geodesic minimal embedding of $\EE^2$.

\end{document}